\begin{document}
\newtheorem{theorem}{Theorem}
\newtheorem{proposition}[theorem]{Proposition}
\newtheorem{conjecture}[theorem]{Conjecture}
\newtheorem{corollary}[theorem]{Corollary}
\newtheorem{lemma}[theorem]{Lemma}
\newtheorem{sublemma}[theorem]{Sublemma}
\newtheorem{observation}[theorem]{Observation}
\newtheorem{remark}[theorem]{Remark}
\newtheorem{definition}[theorem]{Definition}
\theoremstyle{definition}
\newtheorem{notation}[theorem]{Notation}
\newtheorem{question}[theorem]{Question}
\newtheorem{example}[theorem]{Example}
\newtheorem{problem}[theorem]{Problem}
\newtheorem{exercise}[theorem]{Exercise}
\numberwithin{theorem}{section} 
\numberwithin{equation}{section}

\def\MM{\mathbb{M}}
\def\NN{\mathbb{N}}
\def\QQ{{\mathbb{Q}}}
\def\RR{{\mathbb R}}
\def\TT{{\mathbb T}}
\def\HH{{\mathbb H}}
\def\ZZ{{\mathbb Z}}
\def\CC{{\mathbb C}}
\def\EE{\mathbb E}
\def\GG{\mathbb G}

\def\HV{\mathbf V}

\def\H{{\mathcal H}}
\def\L{\mathcal{L}}
\def\S{\mathcal{S}}
\def\F{{\mathcal F}}
\def\G{{\mathcal G}}
\def\M{{\mathcal M}}
\def\B{{\mathcal B}}
\def\C{{\mathcal C}}
\def\T{{\mathcal T}}
\def\I{{\mathcal I}}
\def\E{{\mathcal E}}
\def\V{{\mathcal V}}
\def\W{{\mathcal W}}
\def\U{{\mathcal U}}
\def\A{{\mathcal A}}
\def\R{{\mathcal R}}
\def\D{{\mathcal D}}
\def\N{{\mathcal N}}
\def\P{{\mathcal P}}
\def\K{{\mathcal K}}

\def\c{\mathfrak{c}}
\def\g{\mathfrak{g}}
\def\v{\mathfrak{v}}
\def\f{\mathfrak{f}}
\def\q{\mathfrak{p}}

\newcommand{\Norm}[1]{ \left\|  #1 \right\| }
\newcommand{\set}[1]{ \left\{ #1 \right\} }
\newcommand{\lp}[1]{L^{#1}(\GG)}
\newcommand{\dl}[1]{\delta_{#1}}
\newcommand{\wlp}[1]{\L^{#1,\infty}}
\newcommand{\sg}[1]{\sigma_{#1}}
\newcommand{\lgm}[1]{L_{\infty}(\GG,\mathbb{M}_{#1})}
\newcommand{\inner}[1]{ \langle  #1 \rangle}
\newcommand{\bra}[1]{\Bigg(  #1 \Bigg)}
\newcommand{\hw}[1]{\dot{W}^{1,#1}(\GG)}

\def\dist{\operatorname{dist}\,}
\def\p{\partial}
\def\rp{ ^{-1} }
\def\qd{\,{\mathchar'26\mkern-12mu d}}
\def\ri{{\rm i}}
\def\unit{\mathbf 1}
\def\supp{{\rm supp}}
\def\cci{C_{c}^{\infty}(\GG)}
\def\Exp{\operatorname{\rm Exp}}
\def\Log{\operatorname{\rm Log}}
\def\id{{\rm id}}
\def\sgn{{\rm sgn}}
\def\sd{\mathbb{S}^{d-1}}
\def\swq{\dot{W}^{1,\Q}(\GG)}
\def\wh{\tilde}
\def\Tr{{\rm Tr}}
\def\osc{{\rm osc}}
\def\Q{\mathbb Q}

\title{Schatten Properties of Calder\'{o}n--Zygmund Singular Integral Commutator on stratified Lie groups}

\author{Ji Li}

\address{School of Mathematical and Physical Sciences, Macquarie University, Sydney 2109, Australia}
\email{ji.li@mq.edu.au}

\author{Xiao Xiong}
\address{Institute for Advanced Study in Mathematics, Harbin Institute of Technology, 150001 Harbin, China}
\email{xxiong@hit.edu.cn}

\author{Fulin Yang }
\address{School of Mathematics, Harbin Institute of Technology, 150001 Harbin, China}
\email{fulinyoung@hit.edu.cn}


\thanks{{\it 2000 Mathematics Subject Classification:} 47B10 · 42B20 · 43A85}

\thanks{{\it Key words:} Schatten class, Calder\'{o}n--Zygmund singular integral, Riesz transform, commutator, stratified Lie group}

\date{}
\maketitle

\markboth{J. Li, X. Xiong and F.L. Yang}
{Schatten properties}

\begin{abstract}
We provide full characterisation of the Schatten properties of $[M_b,T]$,  the commutator of Calder\'{o}n--Zygmund singular integral $T$ with symbol $b$ $(M_bf(x):=b(x)f(x))$ on stratified Lie groups $\GG$. We show that, when $p$ is larger than the homogeneous dimension $\QQ$ of $\GG$, the Schatten $\L_p$ norm of the commutator is equivalent to the Besov semi-norm $B_{p}^{\QQ/p}$ of the function $b$; but when $p\leq \QQ$, the commutator belongs to $\L_p$ if and only if $b$ is a constant. For the endpoint case at the critical index $p=\QQ$, we further show that the Schatten $\L_{\QQ,\infty}$ norm of the commutator is equivalent to the Sobolev norm $W^{1,\QQ}$ of $b$. Our method at the endpoint case differs from existing methods of Fourier transforms or trace formula for Euclidean spaces or Heisenberg groups, respectively, and hence can be applied to various settings beyond.
\end{abstract}

\section{Introduction}


Schatten class estimates of the Riesz transform commutators link to the quantised derivative of A. Connes \cite{Connes1988, F2022, FSZ2022,LMSZ2017}. A general setting for quantised calculus is a spectral triple $(\A,\H,D)$, which consists of a Hilbert space $\H$, a pre-$C^*$-algebra $\A $, represented faithfully on $\H$ and a self-adjoint operator $D$ acting on $\H$ such that every $a\in A$ maps the domain of $D$ into itself and the commutator $[D,a] = Da-aD$ extends from the domain of $D$ to a bounded linear endomorphism of $\H$. Here, the quantised differential $\qd a$ of $a \in \A$ is defined to be the bounded operator ${\rm i} [{\rm sgn}(D),a]$, ${\rm i}^2=-1$. This is related to the construction of a Fredholm module from a spectral triple, as in e.g. \cite{Connes1995}. For recent progress on quantum differentiability and quantum integral, we refer to \cite{Connes1994,FLMSZ,F2022,FSZ2022,LMSZ2017,LSZ2012,MSX2019,MSX2020,SXZ2023}.

In the above setting of Connes, Schatten properties of compact operators correspond in some way to the ``size'' of the operators, due to the fact that Schatten properties is associated to the rate of decay of the singular values (see \cite{LSZ2012}).
For a compact operator $A$ on some separable Hilbert space $\H$, denote by $\{s(k,A)\}_{k\in\NN}$ its singular values arranged in non-increasing order with multiplicities. 
Of particular interest are those compact operators which satisfy:
    \begin{align*}
	\sum_{n=0}^\infty s(n,A)^p < \infty,\quad\text{ or,} &\quad s(n,A) = O((n+1)^{-1/p}),\quad  n\to \infty,\text{ or,}
	\\
	&\hskip-1.5cm\sup_{n \geq 1} \frac{1}{\log(n+2)} \sum_{k=0}^n s(k,A)^p < \infty\,,
\end{align*}
for some $p\in(0,\infty)$.
The first condition stated above is for $A$ to be in the Schatten ideal $\L_{p}$, the second is for $A$ to be in weak-Schatten ideal $\L_{p,\infty}$, and the final condition is for $|A|^{p}$ to be in the Macaev-Dixmier ideal $\mathcal{M}_{1,\infty}$, (see \cite[Chapter 4, Section 2.$\beta$]{Connes1994} or \cite[Example 2.6.10]{LSZ2012}).

 A model example for quantised calculus is to take a compact Riemannian spin manifold $M$ with Dirac
operator $D$. The algebra $C(M)$ of continuous functions on $M$ acts by pointwise multiplication on $\H$, the Hilbert space of square-integrable sections of a Hermitian vector
bundle. In quantised calculus the immediate question is to determine the relationship between the degree of differentiability of $f\in C(M)$ and the rate of decay of the singular values of $\qd f$. In general, we have the following:
$$f\in C^{\infty}(M)\Rightarrow \qd f\in \L_{d,\infty},$$
where $d$ is the dimension of manilfold $M$ \cite[Theorem~3.1]{Connes1988}.
In this setting, the (weak) Schatten properties of the commutators characterise the quantum differentiability of $f\in C^{\infty}(M)$.

On $\RR^d$, the operator $D$ is indeed the Dirac operator, and thus ${\rm sgn}(D)$ is given by the Riesz transforms. In this case, the study of boundedness or compactness of the commutators goes back to the pioneer works \cite{CRW1976,U1978}, where the authors show that, the commutator is bounded on $L^{2}(\RR^{d})$ if and only if the symbol function $b\in {\rm BMO}(\RR^{d})$, and compact if and only if $b\in {\rm VMO}(\RR^{d})$. Here, ${\rm BMO}(\RR^{d})$ represents the well-known space of bounded mean oscillation, and $ {\rm VMO}(\RR^{d})$ denotes the closure of $C_{c}^{\infty}(\RR^{d})$ in ${\rm BMO}(\RR^{d})$.

Therefore, it is natural to consider the Schatten properties of $[M_{b},T_{0}]$ for general Calder\'{o}n--Zygmund singular integral $T_0$, under the assumption $b\in {\rm VMO}(\RR^{d})$.
When $T_{0}$ is the Hilbert transform on the real line, a combination of the results in Chapter 6 of \cite{Peller2003} asserts that for all $0 < p < \infty$ we have
\[
[M_{b},T_{0}] \in \mathcal{L}_p\Longleftrightarrow b\in B^{\frac1p}_{p }(\mathbb{R}),
\]
where $B_{p}^{\frac{1}{p}}$ is the Besov space on $\RR$.
The original proof of this equivalence goes back to Peller's famous work \cite{Peller1980-R} for $p\geq 1$, and to Peller \cite{Peller1983} and Semmes \cite{Semmes1984} independently for $0< p <1$. 
Higher dimensional cases have been studied in the harmonic analysis literature. Janson and Wolff \cite{JW1982} proved  that 
\begin{enumerate}[(a)]
\item if $b\in {\rm VMO}(\RR^{d})$ and $d<p<\infty$, then $[M_{b},T_{0}]\in \L_{p}$ if and only if $b\in B_{p}^{\frac{d}{p}}$;
\item if $b\in {\rm VMO}(\RR^{d})$ and $0<p\leq d$, then $[M_{b},T_{0}]\in \L_{p}$ if and only if $b$ is a constant,
\end{enumerate}
where $B_{p}^{\frac{d}{p}}$ is now the Besov space on $\RR^d$.
Thereafter, Rochberg and Semmes \cite{RS1989} gave a delicate discussion on $[M_{b},T_{0}]$ and investigated its higher order commutators on $\RR^{d}$.

\medskip

Recently, Schatten class estimates of commutators of Riesz transforms $\R_{k}, k=1,\ldots,2d$, on Heisenberg group $\mathbb{H}^{d}$ have been investigated by Fan, Lacey and Li \cite{FLL2023}:
\begin{enumerate}[(a)]
	\item if $b\in {\rm VMO}(\mathbb{H}^{d})$ and $2d+2<p<\infty$, then $[M_{b},\R_{k}]\in \L_{p}$ if and only if $b\in B_{p}^{\frac{2d+2}{p}}(\mathbb{H}^{d})$,
	\item if $b\in {\rm VMO}(\mathbb{H}^{d})$ and $0<p\leq 2d+2$, then $[M_{b},\R_{k}]\in \L_{p}$ if and only if $b$ is a constant.
\end{enumerate}
The above assertion illustrates that the dimensional index $\frac{2d+1}{p}$ in the sense of  Riemannian geometry is replaced by homogeneous dimension index $\frac{2d+2}{p}$ in the sense of sub-Riemannian geometry.
Later on, via establishing a trace formula on $\HH^d$, Fan, Li, Mcdonald, Sukochev and Zanin \cite{FLMSZ}  obtained the endpoint weak Schatten characterisation:
\begin{equation*}
\mbox{suppose}\quad b\in L^{\infty}(\HH^{d}),\quad \mbox{then}\quad [M_{b},R_{k}]\in\L_{2d+2,\infty}\quad\mbox{ if and only if}\quad b\in\dot{W}^{1,2d+2}.
\end{equation*}

Motivated by the results on $\RR^{d}$ and $\HH^{d}$, we aim to establish Schatten estimates of the Calder\'{o}n--Zygmund singular integral commutator on general stratified Lie groups $\GG$ in this note. We will show that:
\begin{enumerate}[(a)]
	\item if $b\in {\rm VMO}(\GG)$ and $\Q<p<\infty$, then $[M_{b},T]\in \L_{p}$ if and only if $b\in B_{p}^{\frac{\Q}{p}}(\GG)$;
	\item if $b\in {\rm VMO}(\GG)$ and $0<p\leq \Q$, then $[M_{b},T]\in \L_{p}$ if and only if $b$ is a constant;

\item if $b\in {\rm VMO}(\GG)$, then $[M_{b},T]\in\L_{\Q,\infty}$ if and only if $b\in\swq$. 
\end{enumerate}
Here $\Q$ denotes the homogeneous dimension of $\GG$, and $T$ denotes the Calder\'on--Zygmund operators on $\GG$.

Comparing to the existing methods for dealing with Schatten class of commutators in $\RR^d$ and $\HH^d$, the main difficulty here is that the Fourier transform and trace formula on $\GG$ are not available. 
This requires us to introduce new techniques different from those in \cite{JW1982,RS1989,FLL2023,FLMSZ}, especially for the  
%
endpoint case, i.e., the $\L_{\Q,\infty}$ property of the commutators (which is much more subtle). 

For Riesz transform commutators on Euclidean space, noncommutative torus and noncommutative Euclidean space, \cite{LMSZ2017,MSX2019,MSX2020} provided trace formulae by using pseudo-differential operator theory and double operator integral. On Heisenberg groups, \cite{FLMSZ} provided a trace formula for Riesz transform commutators  by using the irreducible representations (Schr\"{o}dinger representations) and double operator integral. From such trace formulae, they deduced the weak Schatten class estimates of the Riesz transform commutators. 
But in the current setting, the pseudo-differential operators and the irreducible representations are obscure on Carnot groups, so we have no idea on how to establish similar trace formulae for commutators of $T$ on $\GG$. 

To overcome these difficulties, we apply conditional expectation and Alpert bases with higher vanishing  moments to locally expand the kernel of $T$, so that the commutator becomes some linear combinations of nearly weakly orthonormal (NWO) sequences \cite{RS1989}. 
Then, combining properties of NWO sequences and the oscillatory characterisation of Sobolev space, we can finally obtain the  weak Schatten class estimates. This is a new idea to treat the weak Schatten class estimate beyond the Fourier transform and trace formula. On Heisenberg groups, one advantage of our method is that the assumption $b\in L^{\infty}(\HH^d)$ in \cite{FLMSZ} can be relaxed to $b\in {\rm VMO}(\HH^d)$. Moreover, our endpoint case at the critical index also provides the missing theory in \cite{CDLWW,CDLWW2} on the quaternionic setting for the commutator of Cauchy--Szeg\"o projections and Riesz transforms.



\medskip

To state our main results, let us give a brief introduction to basic structures and analysis on stratified Lie groups.

\subsection{stratified Lie groups}
\subsubsection{Basic structures}

 The following basic facts are easily found in the literature, see e.g. \cite{BLU2007,FS1982}. A stratified Lie group $\GG$ is a connected and simply connected nilpotent Lie group whose real left-invariant Lie algebra $\g$ admits a direct sum decomposition
\begin{equation*}
\g=\bigoplus_{i=1}^{\tau}\g_{i},\quad {\rm where}\quad [\g_{1},\g_{i}]=\g_{i+1}\quad {\rm for}\quad i\leq \tau-1,\quad [\g_{1},\g_{\tau}]=0.
\end{equation*}
Here $\tau$ is called the step of group $\GG$. One can identify $\g$ with $\GG$ via the exponential map
\begin{align*}
\Exp:\g\rightarrow\GG,
\end{align*}
which is a diffeomorphism and denote by $\Log$ its inverse which is also a diffeomorphism with polynomial component functions \cite[Theorem 1.3.28]{BLU2007}.
Let 
\begin{align*}
d=\sum_{i=1}^{\tau}n_{i}\quad{\rm with}\quad n_{i}={\rm dim}(\g_{i}).
\end{align*}
By \cite[Theorem~2.2.18]{BLU2007}, one identifies $\GG$ with a nilpotent stratified Lie group $(\RR^{d},\circ)$ via some Lie group isomorphism. So in this note, we will focus on $\GG=(\RR^{d},\circ)$. But since no ambiguity is arisen, we will often omit the multiplication operation $\circ$ in the sequel.

Writing $\RR^{d}=\RR^{n_{1}}\times\RR^{n_{2}}\times\cdots\times\RR^{n_{\tau}}$, then $x\in\GG$ admits a decomposition $(x^{(1)},x^{(2)},\ldots,x^{(\tau)})$ for $x^{(k)}\in\RR^{n_{k}}$. There is a nice polynomial formula of the multiplication mapping: For $x,y\in\GG$, denoting $xy$ by $((xy)^{(1)},(xy)^{(2)},\ldots,(xy)^{(\tau)})$, we have
\begin{align*}
&(xy)^{(1)}=x^{(1)}+y^{(1)},\\
&(xy)^{(j)}=x^{(j)}+y^{(j)}+P^{(j)}(x,y),\quad 2\leq j\leq \tau,
\end{align*}
where each $P^{(j)}(x,y)$ is a homogeneous polynomial with degree $j$ depending only on the previous variables $x^{(1)},\ldots,x^{(j-1)}$ and $y^{(1)},\ldots,y^{(j-1)}$.

The dilations on $\GG$, naturally arising from the direct sum decomposition of $\g$, are defined as
$$\delta_{r}(x)=(rx^{(1)},r^{2}x^{(2)},\ldots,r^{\tau}x^{(\tau)}),\quad r>0$$
for $x=(x^{(1)},x^{(2)},\ldots,x^{(\tau)})\in\GG$. For later convenience, we also denote $\delta_{r}(x)$ by $rx$. 
There are many equivalent symmetric homogeneous norms on $\GG$.
For examples, for $x=(x^{(1)},x^{(2)},\ldots,x^{(\tau)})$, we set 
\begin{align*}
\rho(x)=\bigg(\sum_{k=1}^{\tau}|x^{(k)}|^{\frac{2}{k}}\bigg)^{\frac{1}{2}}\quad{\rm and}\quad
\rho_{\infty}(x)=\max_{1\leq k\leq \tau,1\leq j\leq n_{k}}|x^{(k)}_{j}|^{\frac{1}{k}},
\end{align*}
where $|x^{(k)}|$ denotes the Euclidean norm on $\RR^{n_{k}}$.
One easily finds a constant $C_{\rho}>1$ such that
\begin{align}\label{norm-inequality}
\rho_{\infty}(x)\leq\rho(x)\leq  C_{\rho} \rho_{\infty}(x).
\end{align}
Since there is some constant $A_{0}\geq1$ such that
 \begin{align}\label{triangle-inequality}
 	\rho(xy)\leq A_{0}(\rho(x)+\rho(y)),\quad x,y\in\GG,
 \end{align}
we see that $\rho(\cdot)$ is a quasi-distance on $\GG$. For $r>0$, denote $B(x,r)$ the open ball in $\GG$ associated to the quasi-distance:
\begin{equation}\label{ball}
	 B(x,r) =  \{ y \in \GG:  \rho(y\rp x)< r  \}.
\end{equation}

Since the Lebesgue measure on $\RR^d$ is invariant with respect to the left and right translations on $\GG$, the Lebesgue measure on $\RR^d$ is automatically the Haar measure $\mu$ on $\GG$. For a measurable subset $E\subset \GG$, we have
$$\mu(r E)  = r^\Q \mu(E)$$
with the homogeneous dimension $\Q$ given by
$$\Q =\sum_{i=1}^{\tau}i\cdot n_{i}.$$
In this sense, we get a space of homogeneous type $(\GG, \rho(\cdot),  \mu )$ under the notation of Coifman--Weiss \cite{CW1971,CW1977}. The above $\Q$ is called the homogeneous dimension of $(\GG, \rho(\cdot), \mu )$.

If $\Q\leq 3$, then $\GG$ must equal $\RR^\QQ$ with $\circ = +$. Therefore, in this paper, we always assume that $\Q\geq4$. Note that the simplest  non-Euclidean stratified Lie group is the Heisenberg group $\HH^{1}=(\RR^{3},\circ)$ with homogeneous dimension 4.

\subsubsection{Besov and Sobolev spaces}
Let ${{\rm poly}(\GG)}$ be the space of all polynomials and $\S(\GG)$ be the Schwartz function space on $\GG$, see e.g. \cite[Chapter 1]{FS1982}. There is a canonical Fr\'{e}chet topology on $\S(\GG)$, and $\S'(\GG)$, the space of tempered distribution, is defined to be the topological dual of $\S(\GG)$.

For $f\in\S(\GG)$ and $w,y\in\GG$, define $\lambda_{w}f(y)=f(w\rp y)$, and then extend it to $\S'(\GG)$ by duality.
For $0<\alpha<1$, $1\leq p<\infty$, the (homogeneous) Besov space on $\GG$ is defined as
\begin{align*}
B_{p}^{\alpha}=\set{f\in \S'(\GG)/{{\rm poly}(\GG)}: \Norm{f}_{B_{p}^{\alpha}}<\infty},
\end{align*}
where
\begin{align*}
	\Norm{f}_{B_{p}^{\alpha}}:=
\Bigg(\int_{\GG}\frac{\Norm{\lambda_{w}f-f}_{p}^{p}}{\rho(w)^{p\alpha}}\frac{dw}{\rho(w)^{\Q}}\Bigg)^{\frac{1}{p}}.
\end{align*}

The above Besov semi-norm is given in \cite{CRT2001,IY2012} in terms of differences. There are many other equivalent characterisations, see for example: heat semigroup characterisation in \cite{FMV2006} and Littlewood--Paley characterisation in \cite{FMV2006,IY2012}.

Let ${\{X_{j}\}}_{j=1}^{n_{1}}$ be a basis of the smooth vector fields in $\g_{1}$. The horizontal gradient and sub-Laplacian operator on $\GG$ are separately defined as follows
\begin{align*}
\nabla=(X_{1},X_{2},\ldots,X_{n_{1}})\quad{\rm and}\quad
\Delta=-\sum_{j=1}^{n_{1}}X_{j}^{2}.
\end{align*}
The notation $\dot{W}^{1,p}(\GG)$ ($1<p<\infty$) stands for the stratified Lie group $\GG$ of the homogeneous Sobolev spaces. More precisely, the set $\dot{W}^{1,p}(\GG)$ consists of all $f\in\S'(\GG)$ such that $X_{j}f\in\lp{p}$ for all $j=1,2,\ldots,n_{1}$. 
The related homogeneous Sobolev semi-norm is defined as
\begin{align*}
\Norm{f}_{\dot{W}^{1,p}(\GG)}
=\Big(\sum_{j=1}^{n_{1}}\Norm{X_{j}f}_{\lp{p}}^{p}\Big)^{\frac{1}{p}}.
\end{align*}
Then $\dot{W}^{1,p}(\GG)$ ($1<p<\infty$) becomes a quasi-Banach space when equipped with this semi-norm.

\subsubsection{Singular integrals}
Let
\begin{align*}
\R_{j}=X_{j}\Delta^{-\frac{1}{2}},\quad j=1,2,\ldots,n_{1}.
\end{align*}
These operators are called Riesz transforms of $\GG$. 
For $x\in\GG$, denote
\begin{align*}
K_{j}(x)=\frac{1}{\sqrt{\pi}}\int_{0}^{\infty}t^{-\frac{\Q}{2}-1}(X_{j}h)(t^{-\frac{1}{2}} x)dt,
\end{align*} 
where $h$ is the $C^{\infty}$ solution of the heat equation $(\frac{\p}{\p t}+\Delta)u=0$ ( see  \cite[chapter 1]{FS1982}, \cite[IV.4]{VSC1992} ). 
By \cite{FS1982}, $K_{j}$ is smooth out of the origin $o$ and is homogeneous of degree $-\Q$, i.e.
$$K_{j}(rx)=r^{-\Q}K_{j}(x) \quad \mbox{for} \quad x\neq o.$$
For any $j\in\{1,2,\ldots,n_{1}\}$, the $j$-th Riesz transform $\R_{j}$ is a convolutional singular integral operator with the kernel $K_{j}$, i.e.
\begin{align*}
	\R_{j}f(x)=\int_{\GG}K_{j}(y\rp x)f(y)dy\quad{\rm for}\quad f\in \S(\GG).
	\end{align*}
Here and in the sequel, we abbreviate the Haar measure $d\mu(y)$ on $\GG$ as $dy$.

Moreover, we will consider general convolutional singular integral operator on $\GG$.
Let $K:\GG\setminus\set{o}\rightarrow\RR$ and denote $K(x,y)=K(y\rp x)$. Suppose that
\begin{enumerate}[(i)]
	\item $|K(x,y)|\leq\frac{C}{\rho(y^{-1}x)^{\Q}}$, for $x\neq y$ and some constant $C>0$;
	\item if $\frac{\rho(x_{1}^{-1}x)}{\rho(y^{-1}x)}\leq\frac{1}{2A_{0}}$, then there is some $\sigma>0$ and constant $C>0$ such that
	\begin{align*}
		|K(x,y)-K(x_{1},y)|+|K(y,x)-K(y,x_{1})|
\leq C\frac{\rho(x_{1}^{-1}x)^{\sigma}}{\rho(y^{-1}x)^{\Q+\sigma}}.
	\end{align*}
\end{enumerate}
If $K$ satisfies these two conditions, then
\begin{align*}
	Tf(x)=\int_{\GG}K(y\rp x)f(y)dy,\quad \mbox{for}\quad f\in\S(\GG), 
\end{align*}
gives a  Calder\'{o}n--Zygmund singular integral operator on $\GG$.

In this note, in order to obtain the Schatten norm characterisation of singular integral commutators on $\GG$, we will additionally make the following ``{\it non-degenerate condition }'' assumption on the kernel $K$:\\
There are positive constants $A _{4}\geq A _{3}\geq2A_{0}$ such that, for each ball $B(x_{0},r)$, one can find another ball $B(y_{0},r)$ with  $A _{3}r\leq\rho(x_{0}\rp y_{0})\leq A _{4}r$ satisfying that, for all $(x,y)\in B(x_{0},r)\times B(y_{0},r)$,
\begin{equation}\label{not-change-sign}
	K(x,y) \mbox{ does not change sign }
\end{equation}
and
\begin{equation}\label{nondegenerate}
	 |K(x,y)|\gtrsim\frac{1}{r^{\Q}} .
\end{equation}

Such non-degenerate condition was first proposed in \cite{H2012} and then studied in \cite{LOR2017} in $\RR^{n}$ and \cite{DGKL2020} in the space of homogeneous type.
In classical case, the assumptions \eqref{not-change-sign} and \eqref{nondegenerate} are automatic for Riesz transforms, and \eqref{nondegenerate} is automatic for homogeneous singular integral with smooth kernel $\frac{\Omega(x)}{|x|^{n}}$ for $\Omega$ with mean value zero on the unit sphere.

For stratified Lie groups, Riesz transforms $\R_{j}$ ($j=1,2,\ldots,n_{1}$) are Calder\'{o}n--Zygmund singular integral operator with standard convolutional kernel for $\delta=1$. In this setting, \eqref{not-change-sign} and \eqref{nondegenerate} were verified in \cite{DLLW2019} for the kernel $K_{j}$.

\subsection{Schatten class}
The following material is standard; for more details we refer to \cite{Peller2003,Simon1979}.
Let $\H$ be a Hilbert space. Denote $\B(\H)$ the set of all bounded linear operators on $\H$ and $\K(\H)$ the ideal of compact operators on $\H$. Define the absolute value of $A$ by 
$$|A|=\sqrt{A^{*}A}.$$
Given $A\in\K(\H)$, the sequence of singular values $s(A)=\{s(k,A)\}_{k\in\NN}$ is defined as follows,
\begin{align*}
s(k,A)=\inf\{\|A-F\|:\quad {\rm Rank}(F)\leq k,\quad F\in\B(\H) \}.
\end{align*}
Equivalently, $s(A)$ is the sequence of eigenvalues of $|A|$ arranged in non-increasing order with multiplies. 


Let $0<p,q<\infty$. The Schatten class $\L_{p,q}(\H)$ is the set of operators $A\in\K(\H)$ such that $\{s(k,A)\}_{k\in\NN}$ is $\ell^{p,q}$-summable, i.e. in the Lorentz-Lebesgue sequence space $\ell^{p,q}$. The $\L_{p,q}(\H)$-norm is defined by
\begin{align*}
\|A\|_{\L_{p,q}(\H)}=\Norm{s(A)}_{\ell^{p,q}}
=\Big(\sum_{k=0}^{\infty}s(k,A)^{q}(1+k)^{\frac{q}{p}-1}\Big)^{\frac{1}{q}}.
\end{align*}
We will simply write $\L_{p}(\H)=\L_{p,p}(\H)$.
With this norm $\L_{p}(\H)$ is a Banach space and even more an ideal of $\B(\H)$ when $p\geq1$.
On $\lp{2}$, the Schatten $p$-norm ($p\geq2$) can also be characterized as follows,
\begin{align*}
\|A\|_{\L_{p}(\lp{2})}=\sup\bigg\{\Big(\sum_{k\in\NN}\|Ae_{k}\|^{p}_{\lp{2}}\Big)^{\frac{1}{p}}:\quad \{e_{k}\}~ {{\rm is~ an~ orthonormal~ base~ in}~\lp{2}}\bigg\}.
\end{align*}

The weak Schatten class $\L_{p,\infty}(\H)$ is the set of operators $A$ such that $s(A)$ is in the weak sequence $L_{p}$-space $\ell^{p,\infty}$, with quasi-norm:
\begin{align*}
\|A\|_{\L_{p,\infty}(\H)}=\Norm{s(A)}_{\ell^{p,\infty}}=\sup_{k\geq0}(k+1)^{\frac{1}{p}}s(k,A)<\infty.
\end{align*}
As with the $\L_{p}(\H)$ spaces, $\L_{p,\infty}(\H)$ is an ideal of $\B(\H)$.

Later on, $\L_{p}$ and $\L_{p,\infty}$ always denote the Schatten class and the weak Schatten class on $\lp{2}$ except for special explanations.

\subsection{Main results}

Let $M_{b}$, for $b\in L_{loc}^{1}(\GG)$, be the multiplication operator defined as
\begin{align*}
M_{b}f(x)=b(x)f(x)\quad\mbox{for}\quad f\in L_{loc}^{1}(\GG),
\end{align*}
and let $T$ be a Calder\'{o}n--Zygmund singular integral operator. The commutator of $M_{b}$ with $T$ is defined as follows
\begin{align*}
[M_{b},T]=M_{b}T-TM_{b}.
\end{align*}

Recall the BMO space on $\GG$, following \cite[chapter 5.B]{FS1982},
\begin{align*}
{\rm BMO}(\GG)=\set{b\in L_{loc}^{1}(\GG):\Norm{b}_{BMO}<\infty},
\end{align*}
where
\begin{align*}
\Norm{b}_{{\rm BMO}(\GG)}=\sup_{B}\frac{1}{|B|}\int_{B}|b(x)-b_{B}|dx.
\end{align*}
Here  $b_{B}=\frac{1}{|B|}\int_{B}b(y)dy$, and the supremum runs over all balls $B\subset\GG$ of the form \eqref{ball}.
The VMO space ${\rm VMO}(\GG)$ is defined to be the closure of $\cci$ functions on $\GG$ with respect to the ${\rm BMO}(\GG)$ norm.

In \cite{DLLW2019}, the authors showed that $[M_{b},T]\in\B(L^{2}(\GG))$ if and only if $b\in {\rm BMO}(\GG)$. 
In \cite{CDLW2019}, the authors proved that $[M_{b},T]\in\K(L^{2}(\GG))$ if and only if $b\in {\rm VMO}(\GG)$.

The following theorem is the Schatten $\L_{p}$ property of $[M_{b},T]$ on $\GG$.
\begin{theorem}\label{mt}
Let $T$ be a Calder\'{o}n--Zygmund singular integral operator satisfying \eqref{not-change-sign} and \eqref{nondegenerate}.
Assume that $b\in {\rm VMO}(\GG)$ and $0<p<\infty$.
Then $[M_{b},T]\in\L_{p}$ if and only if 
\begin{enumerate}[{\rm (a)}]
\item \label{mta} $b\in B_{p}^{\frac{\Q}{p}}$, if $\frac{\Q}{p}<1$; in this case we have $\Norm{[M_{b},T]}_{\L_{p}}\simeq\Norm{b}_{B_{p}^{\frac{\Q}{p}}}$.
    \item\label{mtb} $b$ is a constant, if $\frac{\Q}{p}\geq1$.
\end{enumerate}
\end{theorem}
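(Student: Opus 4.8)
\medskip
\noindent\emph{Proof sketch (outline of the intended approach).}

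The plan is to transplant the real-variable scheme of Janson--Wolff \cite{JW1982} and Rochberg--Semmes \cite{RS1989} to $\GG$ with no Fourier analysis. I would first set up three $\GG$-adapted tools: a system of dyadic cubes $\D=\bigcup_{j\in\ZZ}\D_{j}$ on the space of homogeneous type $(\GG,\rho,\mu)$; an Alpert-type orthonormal wavelet system $\{\psi_{Q}^{\nu}\}_{Q\in\D}$ with a large number $N=N(\Q,\sigma)$ of vanishing moments against ${\rm poly}(\GG)$, each $\psi_{Q}^{\nu}$ supported in a fixed dilate of $Q$ and $L^{2}$-normalised; and the nearly weakly orthonormal (NWO) sequences of \cite{RS1989} together with their Schatten estimates, reproved in the present setting. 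I would also record the discrete characterisation $\|b\|_{B_{p}^{\Q/p}}^{p}\simeq\sum_{Q\in\D}(\osc_{Q}b)^{p}\simeq\sum_{Q,\nu}\mu(Q)^{-p/2}|\langle b,\psi_{Q}^{\nu}\rangle|^{p}$ for $0<p<\infty$, where $\osc_{Q}b=\mu(Q)^{-1}\int_{Q}|b-b_{Q}|$; this is the step at which the exponent $\Q/p$, and hence the dichotomy $p>\Q$ versus $p\le\Q$, is created. The standing hypothesis $b\in{\rm VMO}(\GG)$ only serves to make $[M_{b},T]$ compact (by \cite{CDLW2019}), so that singular values are meaningful; for $\Q<p<\infty$ it is automatic, since $B_{p}^{\Q/p}\hookrightarrow{\rm VMO}(\GG)$.

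For sufficiency in \eqref{mta} (so $p>\Q\ge4>2$, comfortably in the range $p\ge2$), the commutator has kernel $(b(x)-b(y))K(x,y)$, and I would localise it over $\D$. When one argument lies in a cube $Q$ and the other in a comparable surrounding annulus, I write $b(x)-b(y)=(b(x)-b_{Q})-(b(y)-b_{Q})$ and Taylor-expand $K$ to order $N$ about the centre of $Q$; condition (ii) controls the remainder by $\rho(\cdot)^{-\Q}$ times a positive power of the eccentricity, and pairing against the Alpert system annihilates the polynomial part. This produces $[M_{b},T]=\sum_{Q,\nu}\lambda_{Q}^{\nu}\,(e_{Q}^{\nu}\otimes f_{Q}^{\nu})+E$ with $|\lambda_{Q}^{\nu}|\lesssim\mu(Q)^{1/2}\osc_{\widehat Q}b$ on a fixed dilate $\widehat Q$ of $Q$, the sequences $\{e_{Q}^{\nu}\},\{f_{Q}^{\nu}\}$ NWO, and $E$ collecting Taylor remainders; the NWO upper estimate then gives $\|[M_{b},T]\|_{\L_{p}}\lesssim(\sum_{Q,\nu}(|\lambda_{Q}^{\nu}|\mu(Q)^{-1/2})^{p})^{1/p}\lesssim(\sum_{Q}(\osc_{Q}b)^{p})^{1/p}\simeq\|b\|_{B_{p}^{\Q/p}}$, and $E$ is handled the same way, the eccentricity power giving the summability.

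For necessity in \eqref{mta} (where $p>\Q\ge 1$) and for part \eqref{mtb} I need the lower bound $\sum_{Q\in\D}(\osc_{Q}b)^{p}\lesssim\|[M_{b},T]\|_{\L_{p}}^{p}$, which I would prove for $1\le p<\infty$ (for \eqref{mtb} with $p<1$ one first uses $\L_{p}\subset\L_{1}$ to pass to the exponent $1\le\Q$); here the non-degeneracy \eqref{not-change-sign}--\eqref{nondegenerate} enters. Since the Alpert spaces $W_{j}=\mathrm{span}\{\psi_{Q}^{\nu}:Q\in\D_{j}\}$, and likewise the analogous spaces $\widetilde W_{j}$ built over the companion cubes, are mutually orthogonal across scales, the scale-diagonal compression $X\mapsto\sum_{j}P_{\widetilde W_{j}}XP_{W_{j}}$ is a conditional expectation (via the standard $2\times2$-matrix embedding), hence an $\L_{p}$-contraction for $p\ge1$, giving $\|[M_{b},T]\|_{\L_{p}}^{p}\ge\sum_{j}\|P_{\widetilde W_{j}}[M_{b},T]P_{W_{j}}\|_{\L_{p}}^{p}$. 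Fixing $j$ and, for each $Q\in\D_{j}$, choosing the companion cube $\widetilde Q$ from \eqref{not-change-sign}--\eqref{nondegenerate} together with $L^{2}$-normalised bumps $g_{Q}$ on $Q$ and $h_{Q}$ on $\widetilde Q$ whose signs are aligned to $\sgn K$ on $Q\times\widetilde Q$ and to $b-b_{\widetilde Q}$, conditions \eqref{not-change-sign}--\eqref{nondegenerate} give $|\langle[M_{b},T]g_{Q},h_{Q}\rangle|\gtrsim\osc_{Q}b$; within scale $j$ the $g_{Q}$ are disjointly supported and the $h_{Q}$ finitely overlapping, so after a bounded splitting into orthonormal families the elementary inequality $\sum_{Q}|\langle Ag_{Q},h_{Q}\rangle|^{p}\le\|A\|_{\L_{p}}^{p}$ (the diagonal conditional expectation, $p\ge1$) yields $\sum_{Q\in\D_{j}}(\osc_{Q}b)^{p}\lesssim\|P_{\widetilde W_{j}}[M_{b},T]P_{W_{j}}\|_{\L_{p}}^{p}$. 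Summing over $j$ and invoking the discrete characterisation proves necessity and the norm equivalence in \eqref{mta}. For \eqref{mtb} this chain forces $\sum_{Q\in\D}(\osc_{Q}b)^{p}<\infty$; when $\Q/p\ge1$ this supercritical Sobolev-type bound forces $\osc_{Q}b=0$ for every $Q$ by the counting argument of \cite{JW1982} adapted to $\GG$ (at depth $k$ inside a fixed cube there are $\simeq 2^{k\Q}$ sub-cubes, so a non-vanishing oscillation makes the series diverge), whence all horizontal derivatives $X_{j}b$ vanish and, by H\"{o}rmander's condition and connectedness of $\GG$, $b$ is constant; the converse in \eqref{mtb} is immediate since then $[M_{b},T]=0$.

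I expect the main obstacle to be the two $\GG$-intrinsic construction steps: building Alpert wavelets with enough vanishing moments against the \emph{non-abelian} polynomials ${\rm poly}(\GG)$, calibrated so that the degree-$N$ Taylor expansion of $K$ genuinely outputs NWO sequences with $\ell^{p}$-summable coefficients (the required $N$ being forced by $\Q$ and the kernel regularity $\sigma$), and transplanting the Rochberg--Semmes NWO Schatten estimates, in both directions, to the quasi-metric measure space $(\GG,\rho,\mu)$. Once these are in place the decompositions above and their bookkeeping are routine, and the reductions of paragraph three keep every use of $\L_{p}$ in the Banach range $p\ge1$.
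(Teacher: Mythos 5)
Your overall architecture tracks the paper's: dyadic cubes on $(\GG,\rho,\mu)$, nearly weakly orthonormal sequences and their Schatten bounds, the non-degeneracy hypotheses \eqref{not-change-sign}--\eqref{nondegenerate} to manufacture lower bounds, and a Janson--Wolff counting argument for the collapse at $p\le\Q$. For the sufficiency direction of \eqref{mta} you propose an Alpert expansion of the kernel together with NWO upper bounds; the paper instead uses the much shorter route in Proposition \ref{thm2}, namely Russo's mixed-norm bound $\|A\|_{\L_{p,\infty}}\lesssim\max(\|G\|_{L^{p}(L^{p',\infty})},\|G^*\|_{L^{p}(L^{p',\infty})})$ followed by Schatten interpolation; the Alpert route is what the paper reserves for the endpoint weak-Schatten theorem, where vanishing moments beyond order one are genuinely needed.

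There is a genuine gap in your necessity argument, in two places. First, the scale-diagonal compression $X\mapsto\sum_{j}P_{\widetilde W_{j}}XP_{W_{j}}$ cannot be combined with your chosen test functions: the bumps $g_{Q},h_{Q}$ do not lie in the wavelet subspaces $W_{j},\widetilde W_{j}$, so $\langle P_{\widetilde W_{j}}[M_{b},T]P_{W_{j}}g_{Q},h_{Q}\rangle$ has no relation to $\langle[M_{b},T]g_{Q},h_{Q}\rangle$ and your chain of inequalities does not close. Second, and more fundamentally, the single-bump lower bound $|\langle[M_{b},T]g_{Q},h_{Q}\rangle|\gtrsim\osc_{Q}b$ does not follow from ``aligning signs'' to $\sgn K$ and to $b-b_{\widetilde Q}$. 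Writing $b(x)-b(y)=(b(x)-b_{\widetilde Q})+(b_{\widetilde Q}-b(y))$, the first summand indeed produces $\fint|b-b_{\widetilde Q}|$, but the second is weighted by $K(x,y)$, which varies over $Q\times\widetilde Q$; it therefore does not average away in $y$ and can be comparable in size to the main term. The paper avoids exactly this by the Journ\'e--Uchiyama median split in Lemma \ref{twochi}: choose a median $m(b)$ of $b$ on the companion cube, split $R=E_{1}\cup E_{2}$ and the companion into $F_{1}\cup F_{2}$ so that on each $E_{j}\times F_{j}$ the sign of $b(x)-b(y)$ is fixed and $|b(x)-m(b)|\le|b(x)-b(y)|$; together with the fixed sign and lower bound for $K$ this yields the two-term estimate $|R|^{-1/2}|\int_{R}b\,h_{R}^{t}|\lesssim\sum_{j=1}^{2}|\langle[M_{b},T]f_{F_{j}^{R}},e_{E_{j}^{R}}\rangle|$, after which the NWO estimate of Lemma \ref{compact} sums over \emph{all} cubes at once, without any per-scale compression. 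Finally, your discrete characterisation $\|b\|_{B_{p}^{\Q/p}}^{p}\simeq\sum_{Q}(\osc_{Q}b)^{p}$ uses first-power oscillation, while the paper works directly with $L^{p}$-oscillation through \eqref{equinorm} (the Haar increments span a finite-dimensional space on each cube, so all $L^{p}$ norms are comparable); passing between the two would require a separate John--Nirenberg-type self-improvement that should be stated.
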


After obtaining the cut-off, it is natural to consider the endpoint case. For our purpose, we require some differentiable conditions on $K$ besides  \eqref{not-change-sign} and \eqref{nondegenerate} in ``non-degenerate condition''. For $\gamma_{T}\in\NN$, we say that kernel $K$ is differentiable up to $\gamma_{T}$-th order if
\begin{align*}
|\nabla_{y}^{\beta}\nabla_{x}^{\alpha}K(x,y)|\simeq\frac{1}{\rho(x,y)^{\Q+|\alpha|+|\beta|}},
\quad|\alpha|,|\beta|\leq \gamma_{T},
\end{align*}
for $\alpha,\beta\in\NN^{n_{1}}$, $x\neq y\in\GG$ and some constant depending on $K$ and $\GG$.  
On $\GG$, we have the following characterisation of weak Schatten class estimate for $[M_{b},T]$.
\begin{theorem}\label{endponit-estimates}
Let $T$ be a Calder\'{o}n--Zygmund singular integral operator satisfying \eqref{not-change-sign} and \eqref{nondegenerate} with kernel $K$ differentiable up to $\gamma_{T}$-th order for some $\gamma_{T}>\Q$.
Assume that $b\in {\rm VMO}(\GG)$. 
Then the commutator $[M_{b},T]\in\L_{\Q,\infty}$ if and only if $b\in\swq$. 
More precisely, there are positive constants $C$ and $c$ such that
\begin{align*}
c\Norm{b}_{\swq}\leq \Norm{[M_{b},T]}_{\L_{\Q,\infty}}\leq C\Norm{b}_{\swq}.
\end{align*}
\end{theorem}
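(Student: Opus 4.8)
The plan is to establish the two inequalities of Theorem~\ref{endponit-estimates} separately, each time routing the argument through a discretisation of $[M_{b},T]$ relative to a dyadic structure on $\GG$ and then reading off the Lorentz--Schatten behaviour from an oscillation characterisation of $\swq$. For the sufficiency ($b\in\swq\Rightarrow[M_{b},T]\in\L_{\Q,\infty}$) I would first fix a Hyt\"onen--Kairema system of dyadic cubes $\D=\bigcup_{k}\D_{k}$ on the space of homogeneous type $(\GG,\rho,\mu)$, with the associated conditional expectations $\EE_{k}$, and Alpert wavelets $\{\psi_{Q}^{\epsilon}\}$: for each $Q\in\D$ a finite collection (of cardinality controlled by $\Q$) of $L^{2}$-normalised functions supported on $Q$, with vanishing moments against all horizontal polynomials of degree at most $N=N(\Q)$, whose union is an orthonormal basis of $L^{2}(\GG)$. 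The matrix entry $\langle[M_{b},T]\psi_{Q'}^{\epsilon'},\psi_{Q}^{\epsilon}\rangle$ is the pairing of $(b(x)-b(y))K(x,y)$ against $\psi_{Q}^{\epsilon}\otimes\overline{\psi_{Q'}^{\epsilon'}}$; combining the off-diagonal smoothness of $K$ to order $\gamma_{T}>\Q$, the polynomial approximability of $b$ on cubes, and the vanishing moments, this entry is dominated by a factor decaying rapidly in the relative scale and position of $Q$ and $Q'$ times a coefficient $\lambda_{Q}$ which is a local gradient/oscillation functional of $b$ at $Q$ (morally, $|Q|^{1/\Q}$ times an $L^{\Q}$-average of $|\nabla b|$ over a dilate of $Q$). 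Grouping the double sum over cube pairs by their relative position, and using that for each fixed relative position the wavelet family and its rescaled translates are uniformly nearly weakly orthonormal (NWO) in the sense of \cite{RS1989}, one realises $[M_{b},T]$ as an absolutely convergent sum of NWO operators whose scalar sequence is dominated by $\{\lambda_{Q}\}_{Q\in\D}$.

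The second ingredient is the Lorentz--Schatten NWO estimate: if $\{e_{Q}\},\{f_{Q}\}$ are NWO families indexed by $\D$ then $\Norm{\sum_{Q}\mu_{Q}\langle\,\cdot\,,f_{Q}\rangle e_{Q}}_{\L_{p,\infty}}\lesssim\Norm{\{\mu_{Q}\}}_{\ell^{p,\infty}}$ for all $0<p<\infty$. Together with the first step this reduces the upper bound to the real-variable assertion $\Norm{\{\lambda_{Q}\}_{Q\in\D}}_{\ell^{\Q,\infty}}\lesssim\Norm{b}_{\swq}$ --- the oscillation characterisation of the Sobolev space. Its proof rests on the sub-elliptic Poincar\'e inequality on $\GG$, which for each single generation $k$ gives a bound $\sum_{Q\in\D_{k}}\lambda_{Q}^{\Q}\lesssim\int_{\GG}|\nabla b|^{\Q}$, combined with the geometric spacing of the scales: these two facts together force weak-type, rather than strong-type, $\Q$-summability of $\{\lambda_{Q}\}$ over the full grid. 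This explains why the natural endpoint ideal is $\L_{\Q,\infty}$ rather than $\L_{\Q}$. It also fits with the cut-off in Theorem~\ref{mt}: at the critical exponent the strong sum $\sum_{Q\in\D}\lambda_{Q}^{\Q}$ diverges logarithmically in the number of scales, so $[M_{b},T]\in\L_{\Q}$ already forces $b$ to be constant.

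For the necessity I would use the non-degenerate condition \eqref{not-change-sign}--\eqref{nondegenerate} together with the two-sided bounds on the derivatives of $K$: for each $Q\in\D$ there is a far companion cube $\widetilde{Q}$, of comparable size and controlled distance, on which $K$ keeps a fixed sign and $|K|\gtrsim|Q|^{-1}$. Pairing $[M_{b},T]$ with a normalised bump on $\widetilde{Q}$ against a suitable unit vector supported near $Q$, the sign condition lets one bound a quantity comparable to $\lambda_{Q}$ from below by the corresponding matrix coefficient of $[M_{b},T]$, up to a ${\rm VMO}$ error that is negligible at the critical index (in the spirit of \cite{CDLW2019}, which already relaxes the $L^{\infty}$ hypothesis of \cite{FLMSZ} to ${\rm VMO}$). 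Selecting the $Q$'s in a sparse subfamily of $\D$ --- using the freedom in the location of $\widetilde{Q}$ --- so that the bumps and the test vectors are each orthonormal, and using that the diagonal of an $\L_{\Q,\infty}$ operator with respect to orthonormal systems lies in $\ell^{\Q,\infty}$, one gets $\Norm{\{\lambda_{Q}\}}_{\ell^{\Q,\infty}}\lesssim\Norm{[M_{b},T]}_{\L_{\Q,\infty}}$ along that subfamily; a scaling and weak-compactness argument in the reflexive space $\lp{\Q}$ --- taking a weak limit, as $k\to\infty$, of the discrete horizontal gradients attached to the generation-$k$ cubes --- then upgrades this, through the reverse half of the oscillation characterisation, to $\Norm{b}_{\swq}\lesssim\Norm{[M_{b},T]}_{\L_{\Q,\infty}}$.

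The hardest point is the endpoint bookkeeping across scales. Since neither the Fourier-transform method of \cite{JW1982,RS1989} nor the trace-formula method of \cite{FLMSZ,MSX2019} survives on a general stratified group, the weak-$\L_{\Q}$ bound must be produced entirely from (i) an honest Alpert expansion of $(b(x)-b(y))K(x,y)$ on $\GG$ whose far and cross-generation error terms are summable only because $\gamma_{T}>\Q$ --- a crude estimate yields merely $\L_{\Q+\varepsilon}$ --- and (ii) the sharp, genuinely weak-type, oscillation characterisation $\Norm{b}_{\swq}\simeq\Norm{\{\lambda_{Q}\}}_{\ell^{\Q,\infty}}$, which must rest on the uniform-in-$k$ Poincar\'e bound rather than a single global estimate. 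Making the NWO machinery operate inside the Lorentz ideal $\L_{\Q,\infty}$, and keeping the ${\rm VMO}$ error in the lower bound negligible precisely at $p=\Q$ --- where there is no room to spare --- are the two remaining delicate technical points.
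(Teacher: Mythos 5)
Your plan shares the paper's two key structural choices --- Alpert bases with $>\Q$ vanishing moments on a Hyt\"onen--Kairema cube system, and the oscillation characterisation $\Norm{b}_{\swq}\simeq\Norm{\{\fint_R|b-b_R|\}_{R\in\D}}_{\ell^{\Q,\infty}}$ (the content of \Cref{sobolev-osc}) as the bridge to the Sobolev norm --- but diverges from the paper in both halves of the argument.

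On the sufficiency side you propose expanding $(b(x)-b(y))K(x,y)$ directly against the tensor wavelet frame and controlling all matrix entries $\langle[M_b,T]\psi_{Q'}^{\epsilon'},\psi_Q^\epsilon\rangle$. The paper instead performs a dyadic Whitney decomposition of $\GG\times\GG\setminus\{x=y\}$ first, writes $K=\sum_{R,s}K\unit_{R\times V_{R,s}}$, and only then applies the local Alpert expansion $(\EE^{k_1}_R+\HH^{k_1}_R)\otimes(\EE^{k_2}_{V_{R,s}}+\HH^{k_2}_{V_{R,s}})$; the NWO scalar that emerges is $\osc_\gamma(b,R)$ attached to the \emph{Whitney} cube $R$, not to an arbitrary wavelet cube. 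The Whitney step is what keeps $Q$ and $\wh Q$ uniformly separated so that the Taylor remainder of $K$ is summable with $k_1,k_2>\Q$; in a raw matrix-coefficient approach you would have to redo this separation by hand in the near-diagonal regime. Your route is workable (it is closer in spirit to the Euclidean treatment of Rochberg--Semmes), but it is a genuinely different decomposition and you would need to add the argument for near-diagonal pairs.

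On the necessity side there is a real gap. Your plan is to test $[M_b,T]$ against a sparse orthonormal family of bumps and use that the bilinear diagonal of an $\L_{\Q,\infty}$ operator with respect to orthonormal systems lies in $\ell^{\Q,\infty}$. That diagonal fact is correct for orthonormal systems (by real interpolation of $A\mapsto\{\langle Ae_k,f_k\rangle\}$ between $\L_1\to\ell^1$ and $\L_\infty\to\ell^\infty$), but the NWO lower bound the paper actually has available (\Cref{compact}) is stated only for $\L_p$, $1<p<\infty$, with NWO (not orthonormal) sequences, and does not directly interpolate to the weak endpoint. To use the orthonormal-diagonal fact you are forced to thin $\D$ to an orthogonal subfamily, and then a ``scaling and weak-compactness'' step is supposed to recover the full $\swq$ norm from the thinned data --- but this is precisely where the argument is vague and unconvincing: the weak-type sequence norm at the critical index is not obviously recovered from a sparse subcollection, and the claimed ${\rm VMO}$ error term is not quantified. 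The paper sidesteps all of this with a dual pairing: it builds an explicit auxiliary operator $L_R$ with kernel proportional to $\unit_{\wh R}K^{-1}\unit_R$, shows that $\fint_R|b-b_R|\lesssim|\Tr([M_b,T]L_R)|$, and then uses the duality $(\ell^{\Q/(\Q-1),1})^*=\ell^{\Q,\infty}$ together with an Alpert--NWO bound on $\big\|\sum_R a_R L_R\big\|_{\L_{\Q/(\Q-1),1}}$. No sparsification, no orthonormality and no limiting argument are needed. Unless you can supply a weak-type analogue of \Cref{compact} for NWO sequences, or prove that a sparse subfamily still controls $\Norm{b}_{\swq}$ via \Cref{sobolev-osc}, the necessity half of your proposal does not close.
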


As a corollary of \Cref{mt} and \Cref{endponit-estimates}, it gives the Schatten estimates of $[M_{b},\R_{k}]$ on $\GG$.
\begin{corollary}\label{rieszcommutator}
Let $j\in\{1,2,\ldots,n_{1}\}$. We have
\begin{enumerate}[(a)]
	\item if $b\in {\rm VMO}(\GG)$ and $\Q<p<\infty$, then $[M_{b},\R_{j}]\in \L_{p}$ if and only if $b\in B_{p}^{\frac{\Q}{p}}(\GG)$;
	\item if $b\in {\rm VMO}(\GG)$ and $0<p\leq \Q$, then $[M_{b},\R_{j}]\in \L_{p}$ if and only if $b$ is a constant;

\item if $b\in {\rm VMO}(\GG)$, then $[M_{b},\R_{j}]\in\L_{\Q,\infty}$ if and only if $b\in\swq$. 
\end{enumerate}

\end{corollary}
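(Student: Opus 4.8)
\textbf{Proof proposal for Corollary~\ref{rieszcommutator}.}
The plan is to obtain this as an immediate consequence of Theorem~\ref{mt} and Theorem~\ref{endponit-estimates}. Parts (a), (b) and (c) follow verbatim from those theorems once we verify that each Riesz transform $\R_{j}$, $j=1,\dots,n_{1}$, is a Calder\'{o}n--Zygmund singular integral operator on $\GG$ of the type treated there; that is, that its kernel $K_{j}$ satisfies the size and smoothness conditions (i)--(ii), the non-degenerate conditions \eqref{not-change-sign} and \eqref{nondegenerate}, and, for part (c), the higher-order differentiability estimates $|\nabla_{y}^{\beta}\nabla_{x}^{\alpha}K_{j}(x,y)|\simeq \rho(x,y)^{-\Q-|\alpha|-|\beta|}$ for all multi-indices with $|\alpha|,|\beta|\leq \gamma$ with $\gamma>\Q$ arbitrary.

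First I would recall from \cite{FS1982} that $K_{j}$ is smooth away from the origin and homogeneous of degree $-\Q$, so that $\nabla_{y}^{\beta}\nabla_{x}^{\alpha}K_{j}(y\rp x)$, being built from left-invariant vector fields applied to a function homogeneous of degree $-\Q$, is itself smooth off the diagonal and homogeneous of degree $-\Q-|\alpha|-|\beta|$; combined with continuity on the compact set $\{\rho(x)=1\}$ this yields the upper bounds in (i), (ii) and in the $\gamma$-th order differentiability hypothesis for every $\gamma$. Next I would invoke \cite{DLLW2019}, where the non-degenerate conditions \eqref{not-change-sign} and \eqref{nondegenerate} were explicitly verified for the Riesz kernels $K_{j}$ on stratified Lie groups; this is precisely the statement already quoted in the excerpt (``\eqref{not-change-sign} and \eqref{nondegenerate} were verified in \cite{DLLW2019} for the kernel $K_{j}$''). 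It remains to upgrade \eqref{nondegenerate}, which is a lower bound at a single well-chosen ball, to the full matching lower bound $|\nabla_{y}^{\beta}\nabla_{x}^{\alpha}K_{j}(x,y)|\gtrsim \rho(x,y)^{-\Q-|\alpha|-|\beta|}$ on such a ball: by homogeneity it suffices to prove, for each fixed pair $(\alpha,\beta)$, that the homogeneous-degree-$(-\Q-|\alpha|-|\beta|)$ function $(\nabla_{y}^{\beta}\nabla_{x}^{\alpha}K_{j})(y\rp x)$ does not vanish identically, and then to select the companion ball $B(y_{0},r)$ (with $A_{3}r\le \rho(x_{0}\rp y_{0})\le A_{4}r$) around a point where it is bounded away from zero and of constant sign, exactly as in the proof of \eqref{nondegenerate} in \cite{DLLW2019}. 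Non-vanishing of these derivatives follows because $\R_{j}=X_{j}\Delta^{-1/2}$ is a nonzero operator, hence its kernel is not a polynomial near any point, so no finite-order derivative can vanish on an open set.

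With these kernel properties in hand, Theorem~\ref{mt} applied to $T=\R_{j}$ gives parts (a) and (b) directly, including the norm equivalence $\Norm{[M_{b},\R_{j}]}_{\L_{p}}\simeq \Norm{b}_{B_{p}^{\Q/p}}$ in the range $\Q<p<\infty$, while the case $p=\Q$ of part (b) is covered since $\Q/p=1\geq 1$. For part (c), Theorem~\ref{endponit-estimates} applied to $T=\R_{j}$ with any choice of $\gamma_{T}>\Q$ (legitimate since, as noted, the differentiability estimates hold to all orders) yields $[M_{b},\R_{j}]\in\L_{\Q,\infty}$ if and only if $b\in\swq$, together with the two-sided bound $c\Norm{b}_{\swq}\leq \Norm{[M_{b},\R_{j}]}_{\L_{\Q,\infty}}\leq C\Norm{b}_{\swq}$. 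The only genuine point requiring care is the two-sided (in particular the lower) differentiability estimate for $K_{j}$; everything else is a bookkeeping reduction to the two main theorems. I expect this lower bound verification, handled by the homogeneity-plus-non-vanishing argument sketched above following \cite{DLLW2019,DGKL2020}, to be the main (and only) obstacle.
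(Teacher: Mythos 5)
Your overall plan coincides with the paper's: specialise Theorems \ref{mt} and \ref{endponit-estimates} to $T=\R_j$ after checking the kernel hypotheses, and the paper itself supplies no more detail than this, pointing to \cite{FS1982} for smoothness and homogeneity of $K_j$ and to \cite{DLLW2019} for \eqref{not-change-sign}--\eqref{nondegenerate}. Your homogeneity-plus-compactness argument for the derivative \emph{upper} bounds is correct and is the right reduction.

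Where the proposal goes wrong is in the treatment of the two-sided estimate $|\nabla_y^{\beta}\nabla_x^{\alpha}K(x,y)|\simeq\rho(x,y)^{-\Q-|\alpha|-|\beta|}$. Read literally (for all $x\neq y$), this hypothesis cannot be satisfied by Riesz kernels: already on $\RR^n$ one has $\partial_{x_1}\bigl(x_1/|x|^{n+1}\bigr)=(|x|^2-(n+1)x_1^2)/|x|^{n+3}$, which vanishes on a cone, and by degree-$(-\Q-|\alpha|-|\beta|)$ homogeneity the same vanishing phenomenon persists for the $K_j$ on $\GG$. Your patch --- establish non-vanishing and then choose the companion ball there --- is also not sound: non-vanishing of a homogeneous function does not produce the uniform quantitative lower bound $\gtrsim\rho(x,y)^{-\Q-|\alpha|-|\beta|}$, and there is no reason the (finitely many) pairs $(\alpha,\beta)$ should all be simultaneously bounded away from zero on the \emph{same} pair $B(x_0,r)\times B(y_0,r)$ that witnesses \eqref{nondegenerate} for $K_j$ itself. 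What actually rescues the corollary is that only the upper bound half of that "$\simeq$" is ever used in the proof of Theorem \ref{endponit-estimates}: the $\gamma_T$-th order derivatives enter solely through the Taylor remainder estimates $|R_Q^{(k_1,k_2)}(\xi,\eta)|\lesssim\rho(c_Q,c_{\wh{Q}})^{-\Q-k_1-k_2}$ in the sufficiency part, and through the analogous bound for $K\rp$ in the necessity part, where the derivatives of $K\rp$ are controlled by the quotient rule once one knows $|K|\simeq|R|\rp$ on $\wh{R}\times R$ --- and that two-sided control on $K$ itself is exactly what \eqref{nondegenerate} (via Lemma \ref{cube-nondegenerate}) together with the size estimate provides. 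So the "$\simeq$" in the differentiability hypothesis should be read as "$\lesssim$"; with that reading your homogeneity argument plus the citation of \cite{DLLW2019} for \eqref{not-change-sign}--\eqref{nondegenerate} already completes the verification, and the lower-bound detour should be dropped rather than repaired.
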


This paper is organized as follows. 
In \Cref{mt-mta}, we provide the proof of \Cref{mt} \eqref{mta} in which the upper bound is established in \Cref{mt-mta-upper} and lower bound is established in \Cref{mt-mta-lower}. 
In \Cref{mt-mtb}, we prove  \Cref{mt} \eqref{mtb}. 
In \Cref{endponit-estimates-sect}, we show \Cref{endponit-estimates} in which the sufficiency of \Cref{endponit-estimates} is arranged in \Cref{endponit-estimates-sect-su} and the necessity of \Cref{endponit-estimates} is arranged in \Cref{endponit-estimates-sect-ne}.
As an application, we obtain endpoint weak Schatten estimate for the commutators of the Cauchy--Szeg\"{o} projection on quaternionic Siegel upper half space and the Riesz transforms on quaternionic  Heisenberg groups in \Cref{applications}, which provides the missing theory in \cite{CDLWW,CDLWW2}.
Lastly, the equivalent characterisation of Sobolev space with respect to mean oscillation is arranged in \Cref{Appendix}.

Throughout the paper, the indicator function of a subset $E\subset \GG$ is denoted by $1_{E}$. We write $A\lesssim B$ if $A\leq CB$ for some constant $C>0$ which does not depend on $A$ and $B$, and $A\simeq B$ denote
the statement that $A\lesssim B$ and $B\lesssim A$.

\section{Proof of \Cref{mt} \eqref{mta}}\label{mt-mta}

In this section, we are going to prove Theorem \ref{mt} \eqref{mta}. The main tools are the cube system and Haar basis introduced in \cite{HK2012,KLPW2016}, and the notion of nearly weakly orthonormal (NWO) sequences of functions proposed in \cite{RS1989}.

\medskip

{\it Dyadic cube system.}
Following \cite{HK2012,KLPW2016}, a countable family $\D:=\bigcup_{k\in\ZZ}\D_{k}$ of Borel sets on $\GG$ is called a system of dyadic cubes with parameters $0<\gamma_{1}\leq\gamma_{2}<\infty$ if it has the following properties:
\begin{enumerate}[{\rm (a)}]
\item $\GG=\bigcup_{R\in\D_{k}}R$ (disjoint union) for $k\in\ZZ$;

\item when $k\geq l$ and $R\in\D_{k}, \tilde{R}\in\D_{l}$, one has either $R\subset\tilde{R}$ or $R\cap\tilde{R}=\varnothing$;

\item for $k\in\ZZ$, each cube $R\in\D_{k}$ is a disjoint union of almost $2^{\Q}$ children in $\D_{k+1}$;

\item for $k\in\ZZ$ and $R\in\D_{k}$, there are positive constants $\gamma_{1}, \gamma_{2}$ such that
$$B(c_{R}, \gamma_{1}2^{-k})\subset R\subset B(c_{R}, \gamma_{2}2^{-k}),$$ where $c_{R}$ denotes the center of $R$ and we will use this notion throughout this paper;

\item if $k\leq l$ and $R\in\D_{l},\tilde{R}\in\D_{k}$ with $R\subset\tilde{R}$, then $B(c_{R}, \gamma_{2}2^{-k})\subset B(c_{\tilde{R}}, \gamma_{2}2^{-k})$.

\end{enumerate}
Here $\D_{k}$ is called the k-th dyadic cube system whose cube has side length $2^{-k}$.
More precisely, the k-th dyadic cube system has the following form
\begin{align*}
\D_{k}=\Big\{\delta_{2^{-k}}(g)\cdot\delta_{2^{-k}}(\Omega):g\in\Gamma\Big\},
\end{align*}
where $\Omega$ is a bounded set and $\Gamma$ is a lattice set associated to $\Omega$. 
The existence of such cube system on $\GG$ can be found in \cite{ADM2023,CT2016,HK2012,KLPW2016,RSS1992}.

For $k\in\ZZ$ and $b\in L_{loc}^{1}(\GG)$, the conditional expectation on the cube system $\D$ is defined by
\begin{align*}
\EE_{k}(b)(x)=\sum_{R\in\D_{k}}\EE_{R}(b)(x)
\quad\mbox{where}\quad
\EE_{R}(b)(x)=b_{R}1_{R}(x).
\end{align*}
Here $b_{R}=\fint_{R}b(y)dy$ denote the integral mean of function $b$ over the cube $R\in\D$ with respect to the Lebesgue measure.
Then for every $b\in\lp{p}$, $1<p<\infty$, there holds
\begin{align*}
\EE_{k}(b)\rightarrow b~{\rm as}~ k\rightarrow\infty,\quad
\EE_{k}(b)\rightarrow 0~{\rm as}~ k\rightarrow-\infty.
\end{align*}
The convergence takes place both in the $L^{p}(\GG)$-norm and pointwise almost everywhere.

Given such a dyadic cube system, we are able to restate the non-degenerate conditions \eqref{not-change-sign} and \eqref{nondegenerate} with respect to this dyadic cube system, that will be more convenient for later use. 
	\begin{lemma}\label{cube-nondegenerate}
		Let $\D$ be a dyadic cube system on $\GG$ and $\D_{k}$ be the corresponding k-th dyadic cube system.
		There are positive constants $A'_{4}\geq A'_{3}\geq2A_{0}$ such that, for each cube $R\in\D_{k}$ (with center $c_{R}$), one can find another cube $\wh{R}\in\D_{k}$ (with center $c_{\wh{R}}$) with  $A'_{3}2^{-k}\leq\rho(c_{R}\rp c_{\wh{R}})\leq A'_{4}2^{-k}$ satisfying that, for all $(x,y)\in R\times \wh{R}$,
	\begin{equation}\label{cube-not-change-sign}
		K(x,y) \mbox{ does not change sign }
	\end{equation}
	and
	\begin{equation}\label{cube-lower-bd}
	|K(x,y)|\gtrsim|R|\rp .
	\end{equation}
	
	\end{lemma}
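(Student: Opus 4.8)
The plan is to deduce \Cref{cube-nondegenerate} from the original non-degenerate conditions \eqref{not-change-sign}--\eqref{nondegenerate} together with the geometric properties (a)--(e) of the dyadic cube system, essentially by a comparison-of-scales argument. Fix $k\in\ZZ$ and a cube $R\in\D_k$ with center $c_R$. By property (d), $R\subset B(c_R,\gamma_2 2^{-k})$ and $B(c_R,\gamma_1 2^{-k})\subset R$. Apply the hypothesis \eqref{not-change-sign}--\eqref{nondegenerate} to the ball $B(x_0,r)$ with $x_0=c_R$ and radius $r=\gamma_2 2^{-k}$: this yields a ball $B(y_0,r)$ with $A_3 r\le \rho(x_0\rp y_0)\le A_4 r$ on which $K$ does not change sign and satisfies $|K(x,y)|\gtrsim r^{-\Q}\simeq |R|\rp$ for all $(x,y)\in B(x_0,r)\times B(y_0,r)$; the comparison $r^{-\Q}\simeq |R|\rp$ is immediate since $\gamma_1 2^{-k}\lesssim r\lesssim \gamma_2 2^{-k}$ and $|R|\simeq 2^{-k\Q}$ by the doubling/homogeneity of $\mu$.

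The remaining issue is that $B(y_0,r)$ is an arbitrary ball, not a dyadic cube. First I would locate a dyadic cube $\wh R\in\D_k$ whose center $c_{\wh R}$ is close to $y_0$: by property (a) the cubes in $\D_k$ cover $\GG$, so $y_0$ lies in some $\wh R\in\D_k$, and by (d), $\rho(y_0\rp c_{\wh R})\le \gamma_2 2^{-k}$, hence $\wh R\subset B(c_{\wh R},\gamma_2 2^{-k})\subset B(y_0, A_0(\gamma_2+\gamma_2)2^{-k})$ via the quasi-triangle inequality \eqref{triangle-inequality}. The point is that $\wh R$ is contained in a fixed dilate of $B(y_0,r)$; if $K$ were only assumed to be non-degenerate on $B(y_0,r)$ itself this would not suffice, so I would instead apply the hypothesis with a slightly enlarged radius $r'=\Lambda 2^{-k}$ for a large structural constant $\Lambda$ (absorbing $\gamma_2$, $A_0$, $C_\rho$), producing $B(y_0',r')$ with $K$ non-vanishing and $|K|\gtrsim (r')^{-\Q}\simeq |R|\rp$ on $B(c_R,r')\times B(y_0',r')$; then choose $\wh R\in\D_k$ to be the dyadic cube containing $y_0'$, so that $R\times\wh R\subset B(c_R,r')\times B(y_0',r')$ by the same containments. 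On this product set $K$ does not change sign (being non-vanishing of a fixed sign on the larger set) and $|K(x,y)|\gtrsim |R|\rp$, giving \eqref{cube-not-change-sign} and \eqref{cube-lower-bd}.

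Finally I would check the separation condition $A_3' 2^{-k}\le \rho(c_R\rp c_{\wh R})\le A_4' 2^{-k}$: the upper bound follows from $\rho(c_R\rp y_0')\le A_4 r'$ and $\rho(y_0'{}\rp c_{\wh R})\le \gamma_2 2^{-k}$ by \eqref{triangle-inequality}, while the lower bound $\rho(c_R\rp c_{\wh R})\gtrsim 2^{-k}$ follows from $\rho(c_R\rp y_0')\ge A_3 r'\ge A_3\Lambda 2^{-k}$ and the same quasi-triangle inequality, provided $\Lambda$ was taken large enough relative to $\gamma_2$ and $A_0$ so that the perturbation by $\rho(y_0'{}\rp c_{\wh R})$ cannot kill the lower bound; this is where the freedom to enlarge $\Lambda$ is used a second time, and one also needs $A_3'\ge 2A_0$, which is arranged by enlarging $\Lambda$ once more. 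I expect the only genuinely delicate point to be bookkeeping the chain of quasi-triangle inequalities so that all the constants $A_3',A_4',\Lambda,\gamma_1,\gamma_2,A_0,C_\rho$ line up with $A_3'\ge 2A_0$; there is no analytic difficulty, just careful tracking of how the dyadic geometry distorts the ball geometry.
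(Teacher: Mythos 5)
Your argument is correct and follows essentially the same route as the paper: enlarge $R$ to a ball of radius comparable to $2^{-k}$ (the paper takes the explicit factor $2A_0(1+\gamma_2)$, you take a generic $\Lambda$ chosen large), apply the non-degenerate hypothesis at that scale to get a ball $B(y_0',r')$, take $\wh R\in\D_k$ to be the cube containing $y_0'$ so that $\wh R\subset B(y_0',r')$, and then verify the separation bounds $A_3'2^{-k}\le\rho(c_R\rp c_{\wh R})\le A_4'2^{-k}$ with $A_3'\ge 2A_0$ by the quasi-triangle inequality. The only cosmetic difference is that you keep the enlargement constant implicit; the chain of inequalities and the use of $A_3\ge 2A_0$ to secure the lower separation bound match the paper's proof.
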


	\begin{proof}
		For $R\in\D_{k}$, one has $R\subset B(c_{R},\gamma_{2}2^{-k})\subset B(c_{R},2A_{0}(1+\gamma_{2})2^{-k})$.
		It follows from \eqref{not-change-sign} and \eqref{nondegenerate} that there is another ball $B(\wh{y},2A_{0}(1+\gamma_{2})2^{-k})$ with $A _{3}2A_{0}(1+\gamma_{2})2^{-k}\leq\rho(c_{R}\rp \wh{y})\leq A _{4}2A_{0}(1+\gamma_{2})2^{-k}$
		satisfying that, for all $(x,y)\in B(c_{R},2A_{0}(1+\gamma_{2})2^{-k})\times B(\wh{y},2A_{0}(1+\gamma_{2}) 2^{-k})$,
		\begin{enumerate}[(a)]
			\item  $K(x,y)$ does not change sign; 
			\item  $|K(x,y)|\gtrsim\frac{1}{(2A_{0}(1+\gamma_{2})2^{-k})^{\Q}}$.
		\end{enumerate}
		One can pick $\wh{R}\in\D_{k}$ such that $\wh{y}$ is contained in the closure of $\wh{R}$, so $\rho(\wh{y}\rp c_{\wh{R}})\leq\gamma_{2}2^{-k}$.
		Thus, for each $w\in\wh{R}$,
		\begin{align*}
			\rho(w\rp \wh{y})\leq A_{0}\Big(\rho(w\rp c_{\wh{R}})+\rho(c_{\wh{R}}\rp\wh{y})\Big)
			\leq 2A_{0}(1+\gamma_{2})2^{-k}.
		\end{align*}
		In other words, $\wh{R}\subset B(\wh{y},2A_{0}(1+\gamma_{2})2^{-k})$.
		Moreover,
		\begin{align*}
			\rho(c_{R}\rp c_{\wh{R}})
			\geq \frac{1}{A_{0}}\rho(c_{R}\rp \wh{y})-\rho(\wh{y}\rp c_{\wh{R}})
			\geq 2A _{3}(1+\gamma_{2})2^{-k}- \gamma_{2}2^{-k}
			\geq 2A_{0}\gamma_{2}2^{-k}, 
		\end{align*}
	and
		\begin{align*}
			\rho(c_{R}\rp c_{\wh{R}})
			\leq A_{0}\Big(\rho(c_{R}\rp \wh{y})+\rho(\wh{y}\rp c_{\wh{R}})\Big)
			\leq (A _{4}+1)2A_{0}^{2}(1+\gamma_{2})2^{-k}.
		\end{align*}
		Letting $A'_{3}=2A_{0}\gamma_{2}$ and $A'_{4}=(A _{4}+1)2A_{0}^{2}(1+\gamma_{2})$, we get the desired result.
	\end{proof}

\medskip

{\it Adjacent dyadic cube system.}
On $\GG$, a finite collection $\{\D^{t}:t=1,2,\ldots,\T\}$ of the dyadic families is called a collection of adjacent system of dyadic cubes with parameters $0<\gamma_{1}\leq\gamma_{2}<\infty$ and $1\leq C_{adj}<\infty$ if it has the following properties:
\begin{enumerate}[{\rm (a)}]
\item each $\D^{t}$ is a system of dyadic cubes with parameters $0<\gamma_{1}\leq\gamma_{2}<\infty$;
    
\item for each ball $B(x,r)\subset\GG$ with $2^{-k-3}< r\leq 2^{-k-2}$, $k\in\ZZ$, there exist $t\in\{1,2,\ldots,\T\}$ and $R'\in\D_{k}^{t}$ with center point $c_{R'}$ such that $\rho(x\rp c_{R'})\leq 2A_{0}2^{-k}$ and $$B(x,r)\subset R'\subset B(x,C_{adj}r).$$
\end{enumerate}
The existence of the adjacent dyadic cube systems on $\GG$ we refer to \cite{HK2012,KLPW2016}. 
Moreover, let $t\in\{1,2,\ldots,\T\}$ and denote $\EE^{t}_{k}$ the conditional expectation associated to $\D^{t}_{k}$ of the cube system $\D^{t}$.

\medskip

{\it Haar basis.}
Let $t\in\{1,2,\ldots,\T\}$.
Regarding $\GG$ as a space of homogeneous-type, \cite{KLPW2016} gives the explicit construction of a Haar basis. Let $n_{\Q}=2^{\Q}-1$. Denote 
$$\set{h_{R}^{t,j}:R\in\D^{t}, j=1,2,\ldots,n_{\Q}}$$ the Haar basis for $\lp{p}$ ($1<p<\infty$) with respect to the cube system $\D^{t}$ on $\GG$. The Haar functions have the following basic properties:
\begin{enumerate}[{\rm (a)}]
	\item $h_{R}^{t,j}$ is a simple Borel-measurable real function on $\GG$;
	
	\item support of $h_{R}^{t,j}$ is $R$;
	
	\item $h_{R}^{t,j}$ is constant on each children of $R$;
	
	\item integral of $h_{R}^{t,j}$ on $\GG$ is vanishing;
	
	\item if $j',j\in\{1,2,\ldots,n_{\Q}\}$ and $j'\neq j$, then $h_{R}^{t,j'}$ and $h_{R}^{t,j}$ are orthogonal;
	
	\item the collection $$\{|R|^{-\frac{1}{2}}1_{R}\}\cup\set{h_{R}^{t,j}:R\in\D^{t}, j=1,2,\ldots,n_{\Q}}$$ is an orthogonal basis for the vector space $V(R)$ of all functions on $R$ that are constant on each sub-cube of $R$;
	
	\item for $1\leq p\leq\infty$, one finds $\|h_{R}^{t,j}\|_{L^{p}(\GG)}\simeq |R|^{\frac{1}{p}-\frac{1}{2}}$.
\end{enumerate}

If $1<p<\infty$ and $f\in\lp{p}$, one has
\begin{align*}
f(x)=\sum_{R\in\D^{t}}\sum_{j=1}^{n_{\Q}}\inner{f,h_{R}^{t,j}}h_{R}^{t,j}(x),
\end{align*}
where the sum converges (unconditionally) both in $\lp{p}$ and pointwise almost everywhere,
see e.g. \cite{KLPW2016}.

\medskip

{\it NWO sequence.}
In \cite{RS1989}, the authors proposed the terminology of nearly weakly orthonormal sequence and then apply this terminology to estimate the Schatten $p$-norm of singular commutator on $\RR^{n}$. This notation is closely connected to Carleson measures. For our purposes, we do not need to recall the full definition, but just recall the following lemmas that will be useful. The first one is the verification of a NWO sequence.
\begin{lemma}\label{NWO}
If functions $\{e_{R}\}_{R\in\D}$ with $\supp(e_{R})\subset R$ satisfy $\Norm{e_{R}}_{q}\leq|R|^{\frac{1}{q}-\frac{1}{2}}$ for some $q>2$, then $\{e_{R}\}_{R\in\D}$ is a NWO sequence.
\end{lemma}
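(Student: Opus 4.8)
\textbf{Proof plan for Lemma~\ref{NWO}.}
The goal is to verify that any collection $\{e_R\}_{R\in\D}$ of functions supported on the dyadic cubes, with the size control $\Norm{e_R}_q\le |R|^{1/q-1/2}$ for a fixed $q>2$, forms a nearly weakly orthonormal (NWO) sequence in the sense of Rochberg--Semmes \cite{RS1989}. The NWO property ultimately reduces to a Carleson-measure estimate: one must show that for every $f\in L^2(\GG)$ the quantity $\sum_{R\in\D}|\inner{f,e_R}|^2 1_R$, suitably normalised, behaves like a Carleson measure, or, in the equivalent dual formulation, that the ``square function'' $\big(\sum_{R\in\D}|\inner{f,e_R}|^2 |R|^{-1}1_R\big)^{1/2}$ is controlled in $L^2$. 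The key observation is that the hypothesis $\Norm{e_R}_q\le |R|^{1/q-1/2}$, combined with H\"older's inequality on the cube $R$, gives
\begin{align*}
|\inner{f,e_R}| \le \Norm{f 1_R}_{q'}\,\Norm{e_R}_q \le |R|^{1/q-1/2}\,\Norm{f1_R}_{q'},
\end{align*}
where $q'=q/(q-1)<2$ is the conjugate exponent.

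First I would recall (or cite from \cite{RS1989}, adapting to the space of homogeneous type $(\GG,\rho,\mu)$) the characterisation of NWO sequences in terms of the maximal function: a sequence $\{e_R\}$ with $\supp e_R\subset R$ is NWO provided that the operator $f\mapsto \{\inner{f,e_R}\}_R$ maps $L^2(\GG)$ boundedly into the appropriate tent/sequence space, and this holds as soon as one has the pointwise domination $\big(\fint_R |f|^{q'}\big)^{1/q'}\lesssim \M_{q'}f(x)$ for $x\in R$, where $\M_{q'}f=\big(\M(|f|^{q'})\big)^{1/q'}$ is the Hardy--Littlewood maximal function raised to the power $q'$. The point is that $q'<2$, so $\M_{q'}$ is bounded on $L^2(\GG)$ (the Hardy--Littlewood maximal operator is bounded on $L^{2/q'}(\GG)$ since $2/q'>1$, using the doubling property of $\mu$). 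Then for any orthonormal or ``almost orthogonal'' indexing one estimates, using the bounded overlap of the cubes at each scale and the nestedness of the dyadic structure,
\begin{align*}
\sum_{R\in\D}|\inner{f,e_R}|^2
\le \sum_{R\in\D} |R|^{2/q-1}\Norm{f1_R}_{q'}^2
= \sum_{R\in\D} |R|\Big(\fint_R |f|^{q'}\Big)^{2/q'}
\lesssim \int_{\GG} \big(\M_{q'}f\big)^2\,d\mu \lesssim \Norm{f}_2^2,
\end{align*}
where in the second-to-last step one replaces the sum over $R$ by an integral by writing $|R| = \int_R 1\,d\mu$ and noting that for $x\in R$ one has $\big(\fint_R|f|^{q'}\big)^{1/q'}\le \M_{q'}f(x)$; the dyadic cubes in each generation partition $\GG$, so summing over $R\in\D_k$ gives exactly $\int_\GG (\text{something}\le \M_{q'}f)^2$, and one must be a little careful that the sum over all scales $k$ is what the NWO definition actually demands (it is the single-scale or the Carleson-packing version, not the full sum), so the estimate above is applied scale-by-scale and then assembled via the Carleson condition.

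The main obstacle, and the step deserving the most care, is matching the hypothesis to the \emph{precise} form of the NWO definition of \cite{RS1989}: that definition involves testing against the family $\{1_R/\sqrt{|R|}\}$ and requires a uniform bound of the shape $\sum_{R:\,R\subset R_0}|\inner{e_R,\cdot}|^2\lesssim |R_0|$-type Carleson packing, rather than a global $\ell^2$ bound. So the real content is to run the maximal-function argument above \emph{localised} to an arbitrary cube $R_0$, i.e. to show $\sum_{R\subset R_0}|R|(\fint_R|f|^{q'})^{2/q'}\lesssim \int_{R_0}(\M_{q'}(f1_{R_0}))^2\lesssim |R_0|\Norm{f}_\infty^2$ when $f$ is a suitable normalised test function, and to check that the doubling constant of $\mu$ and the comparability $B(c_R,\gamma_1 2^{-k})\subset R\subset B(c_R,\gamma_2 2^{-k})$ from the cube system let one transfer the Euclidean arguments of \cite{RS1989} verbatim. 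Once the Hardy--Littlewood maximal operator on $(\GG,\rho,\mu)$ is invoked on $L^{2/q'}$ — which is legitimate precisely because $q>2$ forces $2/q'>1$ — the remaining manipulations are the standard bounded-overlap and Carleson-embedding bookkeeping, and I would present them compactly rather than in full detail, referring to \cite{RS1989} for the template.
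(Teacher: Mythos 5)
The paper itself does not prove this lemma; it is quoted from Rochberg--Semmes \cite{RS1989}, where the verification is essentially a one-line maximal-function argument, so the comparison must be with that standard proof. You have isolated the two genuinely relevant ingredients. H\"older's inequality on $R$ gives, using $1/q-1=-1/q'$,
\[
\frac{|\inner{f,e_R}|}{|R|^{1/2}} \le |R|^{1/q-1}\Norm{f\unit_R}_{q'} = \Big(\tfrac{1}{|R|}\int_R |f|^{q'}\Big)^{1/q'},
\]
and for every $x\in R$ the right side is at most $\M_{q'}f(x):=\big(\M(|f|^{q'})(x)\big)^{1/q'}$, where $\M$ is the Hardy--Littlewood maximal operator on the doubling space $(\GG,\rho,\mu)$; this is bounded on $L^2(\GG)$ exactly because $q>2$ is equivalent to $2/q'>1$. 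That is the entire proof.

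The genuine gap in your plan is the target statement. You never fix the Rochberg--Semmes definition of NWO, and the candidate you work with --- a square-function / Carleson-measure bound on $\sum_R |\inner{f,e_R}|^2 |R|^{-1}\unit_R$ --- is not the NWO condition and is not implied by the hypothesis. Your displayed chain
\[
\sum_{R\in\D}|\inner{f,e_R}|^2 \le \sum_{R\in\D} |R|\Big(\tfrac{1}{|R|}\int_R|f|^{q'}\Big)^{2/q'} \lesssim \Norm{f}_2^2
\]
is false as written: the middle quantity restricted to one generation $\D_k$ is indeed controlled by $\int_\GG(\M_{q'}f)^2\,d\mu$, but summing over all $k\in\ZZ$ produces $+\infty$ for a generic $f\in L^2(\GG)$. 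You do notice this, but then misdiagnose the fix as a Carleson-packing or localisation argument. The actual Rochberg--Semmes definition replaces the sum by a \emph{sup}: $\{e_R\}_{R\in\D}$ is NWO precisely when
\[
\Big\| \sup_{R\in\D} \frac{|\inner{f,e_R}|}{|R|^{1/2}}\,\unit_R \Big\|_{L^2(\GG)} \lesssim \Norm{f}_{L^2(\GG)}.
\]
Once that is the target, the pointwise bound $\sup_{R\ni x}|R|^{-1/2}|\inner{f,e_R}|\le\M_{q'}f(x)$ that you already derived closes the argument immediately, with no bounded-overlap bookkeeping, no Carleson embedding, and no scale-by-scale reassembly. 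So the fix is not extra work localised to a cube $R_0$; it is to aim at the (weaker, sup-based) condition rather than the divergent sum.
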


From the point in \cite{R1993,RS1989}, NWO sequence provides a nice finite dimensional approximation for estimating singular values of a compact operator. 
\begin{lemma}\label{compact-upper-bd}
Let $0<p<\infty$, $0<q\leq\infty$ and $\{\lambda_{R}\}_{R\in\D}\in\ell^{p,q}$. Suppose that $\{e_{R}\}_{R\in\D}$ and $\{f_{R}\}_{R\in\D}$ are NWO sequences, and $\displaystyle A=\sum_{R\in\D}\lambda_{R} \inner{\cdot,e_{R}}f_{R}$ is a compact operator on $\lp{2}$. Then
\begin{align*}
\Norm{A}_{\L_{p,q}}\leq C\Norm{\{\lambda_{R}\}_{R\in\D}}_{\ell^{p,q}}.
\end{align*}
\end{lemma}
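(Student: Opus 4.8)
The plan is to transport the scheme of Rochberg--Semmes \cite{RS1989} from $\RR^{n}$ to the space of homogeneous type $(\GG,\rho,\mu)$: its only structural inputs are the doubling of $\mu$, the nesting and almost-disjointness of the cubes in $\D$, and the Haar-system properties recorded above, all of which are in hand on $\GG$. First I would reduce, by truncating $\{\lambda_{R}\}$ and a limiting argument (using the compactness of $A$ and the Fatou property of $\Norm{\cdot}_{\L_{p,q}}$), to the case of finitely supported $\{\lambda_{R}\}$. The range $0<p\le 1$ is then immediate: $\L_{p}$ is $p$-normed, every NWO sequence satisfies $\Norm{e_{R}}_{2}+\Norm{f_{R}}_{2}\lesssim 1$ (from the maximal-function characterisation of NWO; it is also explicit, via H\"older, under the hypothesis of \Cref{NWO}), and each summand $\lambda_{R}\inner{\cdot,e_{R}}f_{R}$ is rank one with Schatten norm $|\lambda_{R}|\Norm{e_{R}}_{2}\Norm{f_{R}}_{2}$, so $\Norm{A}_{\L_{p}}^{p}\le\sum_{R}|\lambda_{R}|^{p}\Norm{e_{R}}_{2}^{p}\Norm{f_{R}}_{2}^{p}\lesssim\Norm{\{\lambda_{R}\}}_{\ell^{p}}^{p}$. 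The real content is $p>1$ and the Lorentz refinement.

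For those, I would decompose twice. Normalising $\sup_{R}|\lambda_{R}|\le 1$, set $E_{k}=\{R\in\D:2^{-k}<|\lambda_{R}|\le 2^{-k+1}\}$, so $A=\sum_{k\ge1}A_{k}$ with $A_{k}=\sum_{R\in E_{k}}\lambda_{R}\inner{\cdot,e_{R}}f_{R}$ of rank $\le n_{k}:=\#E_{k}$ and $\sum_{k}n_{k}2^{-kp}\simeq\Norm{\{\lambda_{R}\}}_{\ell^{p}}^{p}$ (with the parallel identity for the Lorentz quasi-norm). Then I would split each $A_{k}$ by a stopping-time/Calder\'{o}n--Zygmund decomposition of the family $E_{k}$: peeling off, inside each cube, the maximal subcubes where the accumulated Carleson mass $\sum_{R'\in E_{k},\,R'\subseteq R}|R'|$ first exceeds a fixed multiple of $|R|$ writes $E_{k}=\bigsqcup_{i}\mathcal F_{k,i}$ with each $\mathcal F_{k,i}$ a sparse (Carleson) subfamily and $i$ running over $O(\log(n_{k}+2))$ values. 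On such a sparse subfamily the qualitative NWO hypothesis upgrades to genuine quantitative bounds, $\bigl\|\sum_{R\in\mathcal F_{k,i}}\inner{\cdot,e_{R}}f_{R}\bigr\|\lesssim 1$ and $\bigl\|\sum_{R\in\mathcal F_{k,i}}\inner{\cdot,e_{R}}f_{R}\bigr\|_{\L_{2}}^{2}\lesssim\#\mathcal F_{k,i}$, so $A$ becomes a sum of pieces $A_{k,i}$, indexed by size level $k$ and Carleson depth $i$, each of controlled rank, operator norm $\lesssim 2^{-k}$, and Hilbert--Schmidt norm squared $\lesssim 2^{-2k}\#\mathcal F_{k,i}$.

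To finish, I would estimate singular values: a rank-$\le r$, norm-$\le c$ operator has $\L_{p,q}$ quasi-norm $\lesssim c\,r^{1/p}$; interpolating this with the Hilbert--Schmidt bound on each $A_{k,i}$ and summing over $k$ and $i$ gives $\Norm{A}_{\L_{p,q}}\lesssim\Norm{\{\lambda_{R}\}}_{\ell^{p,q}}$, since the resulting aggregate is comparable (by the size decomposition) to $\Norm{\lambda^{*}}_{\ell^{p,q}}=\Norm{\{\lambda_{R}\}}_{\ell^{p,q}}$. The summation is not a mere quasi-triangle inequality: pieces at distinct size levels and distinct Carleson depths are mutually almost orthogonal, which is what keeps the $\L_{p,q}$ quasi-norm of the sum controlled by the $\ell^{p,q}$-type aggregate of the individual profiles rather than by their $\ell^{1}$-sum, and the geometric decay in $k$ and $i$ then absorbs the logarithmic multiplicities $O(\log(n_{k}+2))$.

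The hard part will be exactly this last mechanism together with the upgrade in the second step: extracting, from the \emph{nearly} weakly orthonormal hypothesis, the quantitative operator-norm and trace estimates on each sparse subfamily, and carrying out the almost-orthogonality bookkeeping that sums the pieces without loss. This is the technical heart of \cite{RS1989}; on $\GG$ the additional task is only to verify that the doubling of $\mu$ and the nesting properties (a)--(e) of the cube system $\D$ supply everything the Euclidean proof used, which they do.
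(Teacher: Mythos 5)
The paper does not prove this lemma; it is quoted from Rochberg--Semmes \cite{RS1989} (see also \cite{R1993}), so there is no inline argument to check your attempt against. That said, your reconstruction is a reasonable skeleton of the Rochberg--Semmes proof and correctly identifies its essential mechanisms: the dyadic level-set decomposition $E_{k}=\{R:2^{-k}<|\lambda_{R}|\le 2^{-k+1}\}$, the further stopping-time splitting of each $E_{k}$ into Carleson-sparse subfamilies $\mathcal{F}_{k,i}$ with $O(\log(n_{k}+2))$ multiplicity (this count is right: a family of $n$ dyadic cubes has Carleson constant $O(\log n)$, since full coverage of a cube at relative scale $j$ costs $\approx 2^{\Q j}$ cubes), and the observation that sparseness is precisely what turns the NWO maximal-function hypothesis into a quantitative operator-norm bound $\lesssim 1$ on each piece. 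All of those are genuine ingredients of \cite{RS1989}, and on $\GG$ nothing new is needed beyond the doubling of $\mu$ and the properties (a)--(e) of the dyadic system.

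Where you are not yet in possession of a proof --- and you candidly flag this --- is the summation step. The pieces $A_{k,i}$ at distinct scale/depth indices are not orthogonal, and a bare (quasi-)triangle inequality in $\L_{p}$ leaves a $(\log n_{k})^{1-1/p}$ loss at each level when $p>1$, which destroys the sharp estimate; making ``almost orthogonality'' precise via the singular-value bookkeeping $s_{m+n}(A+B)\le s_{m}(A)+s_{n}(B)$ along the $(k,i)$-grid, using simultaneously the rank bound $\#\mathcal{F}_{k,i}$ and the norm bound $\lesssim 2^{-k}$, is exactly the technical content of \cite{RS1989} and cannot be waved through as written. Also, your $0<p\le 1$ argument via $p$-subadditivity only covers $q=p$; the Lorentz cases $q\ne p$ in that range still need real interpolation between two $\ell^{p_{j}}$ endpoints with $p_{j}\le 1$. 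Since the paper itself treats this lemma as a black box, the appropriate move here is to cite \cite{RS1989} outright; if a self-contained proof is wanted, the gap you identified is the one that must actually be filled.
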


By \cite[Note 2 (Theorem 6.4)]{R1993}, it implies below result.
\begin{lemma}\label{compact-upper-bd-function}
Suppose that $\{|e_{R}|\}_{R\in\D}$ and $\{|f_{R}|\}_{R\in\D}$ are NWO sequences, and $\displaystyle A\varphi(x)=\sum_{R\in\D}\lambda_{R} \inner{\varphi,b_{R}(\cdot,x)e_{R}}f_{R}(x)$ is a compact operator on $\lp{2}$ with $|b_{R}(x,y)|\leq1$.
If $2<p<\infty$, $1\leq q\leq\infty$ and $\{\lambda_{R}\}_{R\in\D}\in\ell^{p,q}$,  then
\begin{align*}
\Norm{A}_{\L_{p,q}}\leq C\Norm{\{\lambda_{R}\}_{R\in\D}}_{\ell^{p,q}}.
\end{align*}
\end{lemma}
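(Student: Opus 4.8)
The plan is to deduce this from the untwisted statement, Lemma~\ref{compact-upper-bd} (which is Theorem~6.4 of \cite{R1993}), by absorbing the bounded multiplier $b_{R}(\cdot,x)$ into the NWO sequence $\{e_{R}\}$ at the price of letting that sequence depend on the outer variable $x$. Concretely, set $g_{R,x}(y):=b_{R}(y,x)\,e_{R}(y)$, so that $A\varphi(x)=\sum_{R\in\D}\lambda_{R}\,\inner{\varphi,g_{R,x}}\,f_{R}(x)$. Since $|b_{R}(y,x)|\leq1$, we have the pointwise bound $|g_{R,x}(y)|\leq|e_{R}(y)|$ for every $x$, and $g_{R,x}$ is supported where $e_{R}$ is. Thus $\{g_{R,x}\}_{R\in\D}$ is dominated, uniformly in $x$, by the NWO sequence $\{|e_{R}|\}_{R\in\D}$ appearing in the hypothesis.

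Now I would invoke the fact that the proof of Lemma~\ref{compact-upper-bd} (equivalently, of Theorem~6.4 in \cite{R1993}) uses the sequence $\{e_{R}\}$ only through Carleson-type control of the absolute values $|e_{R}|$ — this being exactly what the NWO condition of \cite{RS1989} encodes — together with the corresponding control for $\{f_{R}\}$. Such Carleson bounds are inherited by any family that is pointwise dominated by $\{|e_{R}|\}$, and the domination here is uniform in the parameter $x$; hence the argument producing $\Norm{A}_{\L_{p,q}}\lesssim\Norm{\{\lambda_{R}\}_{R\in\D}}_{\ell^{p,q}}$ applies verbatim, the extra $x$-dependence being harmless because it is controlled by a fixed NWO sequence. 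This robustness under multiplication by a bounded kernel is precisely the content of Note~2 following Theorem~6.4 of \cite{R1993}, and it is also where the hypotheses $2<p<\infty$ and $1\leq q\leq\infty$ enter: these are the ranges in which NWO sequences yield the required upper bounds. If one wishes, the dependence on $q$ can be separated off, since the map $\{\lambda_{R}\}\mapsto A$ is linear: once the estimate is known in the diagonal case $q=p$ for every $2<p<\infty$, real interpolation between two such exponents $2<p_{0}<p<p_{1}<\infty$ delivers it from $\ell^{p,q}=(\ell^{p_{0}},\ell^{p_{1}})_{\theta,q}$ into $\L_{p,q}=(\L_{p_{0}},\L_{p_{1}})_{\theta,q}$.

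The one genuinely non-formal point is the uniformity in $x$: one cannot simply apply Lemma~\ref{compact-upper-bd} fibrewise, because the operator $A$ couples all values of $x$ through the sum over $R$, so the argument really has to re-run the proof of Theorem~6.4 of \cite{R1993} while carrying the auxiliary variable along and checking that every NWO/Carleson estimate holds with constants independent of $x$. I expect this bookkeeping to be the main obstacle; the remaining ingredients — reducing to finitely supported $\{\lambda_{R}\}$ so that $A$ has finite rank, and the interpolation step — are routine.
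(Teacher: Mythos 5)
The paper's proof of this lemma is a one-line citation of Note~2 following Theorem~6.4 in \cite{R1993}, which is precisely the reference you identify, so your approach matches the paper's. Your sketch of the mechanism --- absorbing $b_{R}(\cdot,x)$ into $e_{R}$ via pointwise domination, uniform in the outer variable $x$ --- is a fair gloss on what that note provides, though note that this heuristic by itself would not explain the restriction to $2<p<\infty$ (absent from the untwisted Lemma~\ref{compact-upper-bd}); that restriction comes from Rochberg's note itself, not from the NWO/Carleson domination argument you outline.
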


And, we can extract the following result from \cite{RS1989}.
\begin{lemma}\label{compact-upper-bd-sequence}
Let $0<p<\infty$, $0<q\leq\infty$ and $\{\lambda_{R}\}_{R\in\D}\in\ell^{p,q}$. Suppose that $\{G_{R,l}\}_{R\in\D}$ and $\{F_{R,l}\}_{R\in\D}$ are NWO sequences. If $\displaystyle A=\sum_{l\in\ZZ^{2}}\sum_{R\in\D}\lambda_{R}\gamma_{R,l} \inner{\cdot,G_{R,l}}F_{R,l}$ is a compact operator on $\lp{2}$ with $|\gamma_{R,l}|\lesssim\frac{1}{(1+|l|)^{r}}$ for some $r\geq1$, then
\begin{align*}
\Norm{A}_{\L_{p,q}}\leq C\Norm{\{\lambda_{R}\}_{R\in\D}}_{\ell^{p,q}}.
\end{align*}
\end{lemma}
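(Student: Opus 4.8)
The plan is to split $A$ according to the parameter $l\in\ZZ^{2}$, bound each piece by the basic NWO estimate already available, and then reassemble. For fixed $l$ put
$$A_{l}=\sum_{R\in\D}\lambda_{R}\gamma_{R,l}\,\inner{\cdot,G_{R,l}}F_{R,l},$$
so that $A=\sum_{l\in\ZZ^{2}}A_{l}$. Since $|\gamma_{R,l}|\lesssim (1+|l|)^{-r}$ and $\ell^{p,q}$ is invariant, with comparable quasi-norm, under coordinatewise multiplication by a bounded sequence, the sequence $\{\lambda_{R}\gamma_{R,l}\}_{R\in\D}$ lies in $\ell^{p,q}$ with
$$\bigl\|\{\lambda_{R}\gamma_{R,l}\}_{R\in\D}\bigr\|_{\ell^{p,q}}\lesssim (1+|l|)^{-r}\,\bigl\|\{\lambda_{R}\}_{R\in\D}\bigr\|_{\ell^{p,q}}.$$
Because $\{G_{R,l}\}_{R\in\D}$ and $\{F_{R,l}\}_{R\in\D}$ are NWO sequences (uniformly in $l$, which is the case for the families arising in our applications, as the supports and normalizations in \Cref{NWO} are $l$-independent), \Cref{compact-upper-bd} — or \Cref{compact-upper-bd-function} if bounded modulating factors are also present — yields
$$\Norm{A_{l}}_{\L_{p,q}}\lesssim (1+|l|)^{-r}\,\bigl\|\{\lambda_{R}\}_{R\in\D}\bigr\|_{\ell^{p,q}}.$$

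It then remains to sum over $l$. Let $\theta=\min\{1,p,q\}$, so that $\L_{p,q}$ obeys the $\theta$-quasi-triangle inequality $\Norm{X+Y}_{\L_{p,q}}^{\theta}\leq\Norm{X}_{\L_{p,q}}^{\theta}+\Norm{Y}_{\L_{p,q}}^{\theta}$. Applying it to $A=\sum_{l}A_{l}$ gives
$$\Norm{A}_{\L_{p,q}}^{\theta}\leq\sum_{l\in\ZZ^{2}}\Norm{A_{l}}_{\L_{p,q}}^{\theta}\lesssim\bigl\|\{\lambda_{R}\}_{R\in\D}\bigr\|_{\ell^{p,q}}^{\theta}\sum_{l\in\ZZ^{2}}(1+|l|)^{-r\theta},$$
and the last series converges once $r\theta>2$; this holds in every application we make, where $\theta=1$ (since either $p>\Q\geq 4$, or we are in $\L_{\Q,\infty}$) and $r$ is of order $\Q$. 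In the stated generality, with only $r\geq1$, one instead argues scale by scale: for each generation $k$ the functions $\{G_{R,l}\}_{R\in\D_{k}}$ and $\{F_{R,l}\}_{R\in\D_{k}}$ have boundedly overlapping supports, so after grouping the $l$ in a dyadic shell $2^{m}\leq1+|l|<2^{m+1}$ into $O(1)$ families whose operators have pairwise disjoint domain- and range-supports, the sum over each family behaves like a single NWO operator; one then sums the $O(1)$ families and the shells against the decay $2^{-mr}$. This is the reorganization carried out in \cite{RS1989}.

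The main obstacle is precisely this summation over $l$: the crude triangle-inequality estimate squanders the off-diagonal decay against the $\lesssim(1+|l|)$-sized shells, so one must exploit the bounded overlap of the supports of the NWO families $\{G_{R,l}\}$ and $\{F_{R,l}\}$ at each fixed scale in order to turn the $l$-sum inside a shell into an essentially orthogonal sum before summing the shells. Everything else is a direct appeal to \Cref{compact-upper-bd} (or \Cref{compact-upper-bd-function}) together with the standard quasi-Banach ideal properties of $\L_{p,q}$.
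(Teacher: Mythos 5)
The paper gives no proof of this lemma; it simply cites \cite{RS1989}, so there is no in-text argument to compare against. With that caveat, here is an assessment of your proposal.

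Your first step — decomposing $A=\sum_{l}A_{l}$, using that $\ell^{p,q}$ is stable under coordinatewise multiplication by a bounded sequence, applying \Cref{compact-upper-bd}/\Cref{compact-upper-bd-function} to each $A_{l}$, and reassembling with the $\theta$-quasi-triangle inequality — is clean and correct, but it only yields the conclusion when $r\theta>2$. For $l\in\ZZ^{2}$ and $\theta=\min\{1,p,q\}$, the series $\sum_{l\in\ZZ^{2}}(1+|l|)^{-r\theta}$ diverges for $r\theta\leq 2$, so the argument does not cover the stated threshold $r\geq 1$. In the paper's actual applications (the bounds preceding \eqref{F-4-s-R} and \eqref{Gamma-4}), the factors $\gamma_{R,l}$ decay exponentially in $|l|$ (like $2^{-(m_{1}(k_{1}-\Q)+m_{2}(k_{2}-\Q))}$ with $k_{1},k_{2}>\Q$), so $r$ can be taken arbitrarily large; your triangle-inequality argument is therefore entirely adequate for what the paper actually uses, and in that sense your write-up is a perfectly good substitute for the uncited RS1989 reasoning.

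The genuine gap is in the second half, where you try to salvage the general $r\geq 1$ case. You assert that the families $\{G_{R,l}\}_{R\in\D_{k}}$ and $\{F_{R,l}\}_{R\in\D_{k}}$ have boundedly overlapping supports and can be grouped, shell by shell in $l$, into $O(1)$ families whose operators have pairwise disjoint domain and range supports. Nothing in the lemma's hypotheses supplies that structure: the only data you have is that, for each fixed $l$, $\{G_{R,l}\}_{R}$ and $\{F_{R,l}\}_{R}$ are NWO, which by \Cref{NWO} means $\supp G_{R,l}\subset R$ for every $l$ — so for fixed $R$ the supports over different $l$ all sit in the same cube, and there are $O(2^{2m})$ points $l$ in the shell $2^{m}\leq 1+|l|<2^{m+1}$, not $O(1)$. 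The disjointness you invoke is exactly the extra geometric input that would have to be added to the hypotheses (and is present in the concrete expansions the paper carries out), not something that follows from NWO alone. As stated, then, the lemma for $r\geq 1$ is not established by your argument; either the threshold should be $r>2$ (which is all the paper needs, and all your proof gives), or the hypotheses must be strengthened to record the separation of supports across $l$, at which point your "reorganization" sketch can be turned into an actual proof along the lines of \cite{RS1989}.
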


The last one is an estimate of $\L_{p}$-norm for a compact operator with respect to a NWO sequence (the statement on $\RR^{n}$ in \cite{RS1989}).
\begin{lemma}\label{compact}
Let $1<p<\infty$ and $\{f_{R}\}$, $\{e_{R}\}$ be NWO sequences on $\lp{2}$. If $A$ is a compact operator in $\B(L^{2}(\GG))$, then
\begin{align*}
\Big(\sum_{R\in\D}|\inner{Af_{R},e_{R}}|^{p}\Big)^{\frac{1}{p}}\lesssim\Norm{A}_{\L_{p}}.
\end{align*}
\end{lemma}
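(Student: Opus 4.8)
The plan is to obtain this as a dual statement to Lemma~\ref{compact-upper-bd}, via the Schatten--H\"older inequality, mirroring the Euclidean argument of \cite{RS1989}. There is nothing to prove unless $A\in\L_p$, so assume $A\in\L_p$. Fix a finitely supported sequence $\lambda=\{\lambda_R\}_{R\in\D}$ and set
\[
B_\lambda:=\sum_{R\in\D}\lambda_R\inner{\cdot,e_R}f_R,
\]
a finite-rank (hence compact) operator on $L^2(\GG)$ of exactly the form treated in Lemma~\ref{compact-upper-bd} (with the NWO sequences $\{e_R\}$, $\{f_R\}$). Expanding $\Tr(B_\lambda A)$ against any orthonormal basis $\{g_k\}$ of $L^2(\GG)$ and using $f_R=\sum_k\inner{f_R,g_k}g_k$ together with the continuity of $A$, one checks the identity
\[
\Tr(B_\lambda A)=\sum_{R\in\D}\lambda_R\sum_k\inner{Ag_k,e_R}\inner{f_R,g_k}=\sum_{R\in\D}\lambda_R\inner{Af_R,e_R},
\]
all sums being finite or absolutely convergent because $\lambda$ has finite support.

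Next I would combine two ingredients. On one hand, the Schatten--H\"older inequality gives $|\Tr(B_\lambda A)|\le\Norm{B_\lambda}_{\L_{p'}}\Norm{A}_{\L_p}$ with $\frac1p+\frac1{p'}=1$. On the other hand, Lemma~\ref{compact-upper-bd} applied with exponents $p=q=p'$ (legitimate since $1<p'<\infty$ and $B_\lambda$ is compact) yields $\Norm{B_\lambda}_{\L_{p'}}=\Norm{B_\lambda}_{\L_{p',p'}}\le C\Norm{\lambda}_{\ell^{p',p'}}=C\Norm{\lambda}_{\ell^{p'}}$, with $C$ depending only on the NWO constants of $\{e_R\}$, $\{f_R\}$ (and on $\GG$, $p$). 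Together with the identity above this gives
\[
\Big|\sum_{R\in\D}\lambda_R\inner{Af_R,e_R}\Big|\le C\Norm{\lambda}_{\ell^{p'}}\Norm{A}_{\L_p}
\]
for every finitely supported $\lambda$. Taking the supremum over all such $\lambda$ with $\Norm{\lambda}_{\ell^{p'}}\le1$ identifies the left-hand side with $\big(\sum_{R\in\D}|\inner{Af_R,e_R}|^p\big)^{1/p}$, which proves the lemma.

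I do not expect a serious obstacle here: once Lemma~\ref{compact-upper-bd} is granted, the argument is essentially bookkeeping, the only minor points being the verification of the trace identity and the observation that the intermediate operators $B_\lambda$ are compact (automatic on the dense set of finitely supported sequences, which is all that is used). If one preferred a proof avoiding trace duality, an alternative is to argue directly from the singular value decomposition $A=\sum_k s(k,A)\inner{\cdot,u_k}v_k$, expand $\inner{Af_R,e_R}=\sum_k s(k,A)\inner{f_R,u_k}\inner{v_k,e_R}$, and estimate using the $L^2\to\ell^2$ boundedness of $\psi\mapsto\{\inner{\psi,e_R}\}_R$ built into the NWO condition, interpolating between $p=2$ and the elementary bound $\sup_R|\inner{Af_R,e_R}|\le\Norm{A}_{\B(L^2)}$ for $p\ge2$ and dualising for $p<2$; but the route above is shorter and uses only what has already been recorded.
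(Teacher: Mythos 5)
The paper itself gives no proof of Lemma~\ref{compact}: it simply records the statement as an import from Rochberg--Semmes, with the parenthetical ``(the statement on $\RR^{n}$ in \cite{RS1989})''. So there is no in-paper argument to compare against, and you should be judged on your own reasoning.

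Your duality derivation is correct. The trace identity $\Tr(B_\lambda A)=\sum_R\lambda_R\inner{Af_R,e_R}$ is verified properly (expand against an orthonormal basis, use Parseval, and note that the interchange of sums is harmless since $\lambda$ has finite support). The Schatten--H\"older bound $|\Tr(B_\lambda A)|\le\Norm{B_\lambda}_{\L_{p'}}\Norm{A}_{\L_p}$ is standard. Applying Lemma~\ref{compact-upper-bd} with exponents $p=q=p'$ to the finite-rank (hence compact) operator $B_\lambda$ is legitimate and gives $\Norm{B_\lambda}_{\L_{p'}}\le C\Norm{\lambda}_{\ell^{p'}}$. Taking the supremum over finitely supported $\lambda$ with $\Norm{\lambda}_{\ell^{p'}}\le 1$ is sufficient: for any sequence $\mu$, the norm $\Norm{\mu}_{\ell^p}$ equals the supremum of $\Norm{\mu 1_F}_{\ell^p}$ over finite sets $F$, so the duality $\ell^p=(\ell^{p'})^*$ (valid since $1<p<\infty$) applies even without knowing in advance that $\{\inner{Af_R,e_R}\}_R\in\ell^p$. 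This is essentially the Rochberg--Semmes proof, and it is the natural companion to Lemma~\ref{compact-upper-bd}: one gives the ``synthesis'' direction, the other the ``analysis'' direction, and they are linked by trace duality exactly as you describe.

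One cosmetic remark: for the final duality step you need $\lambda$ to range over complex (or at least signed) sequences, so that $\lambda_R=\overline{\sgn(\mu_R)}\,|\mu_R|^{p-1}/c$ is admissible; this is implicit in the $\ell^{p,q}$ normalization used in the paper and harmless, but worth stating if you expand the sketch into a full write-up.
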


\subsection{upper bound}\label{mt-mta-upper}
Let $t\in\{1,2,\ldots,\T\}$.
For $R\in\D^{t}_{k}$, let $h^{t}_{R}$ be the haar function among $\{h_{R}^{t,j}\}_{j=1}^{n_{\Q}}$ satisfying that 
$|\int_{R}b(y)h_{R}^{t,j}(y)dy|$ is maximal for $j\in\{1,2,\ldots,n_{\Q}\}$.
Noting that the function $(\EE^{t}_{k+1}(b)-\EE^{t}_{k}(b))1_{R}$ is a sum of $n_{\Q}$ Haar functions, we are in a finite dimensional setting and all $\lp{p}$-spaces have comparable
norms. Therefore,
\begin{align}\label{equinorm}
\Bigg(\fint_{R}|\EE^{t}_{k+1}(b)(y)-\EE^{t}_{k}(b)(y)|^{p}dy\Bigg)^{\frac{1}{p}}\simeq|R|^{-\frac{1}{2}}|\int_{R}b(y)h^{t}_{R}(y)dy|.
\end{align}

\begin{lemma}\label{twochi}
Assume that $T$ is a Calder\'{o}n--Zygmund singular integral operator satisfying \eqref{not-change-sign} and \eqref{nondegenerate}, $b\in {\rm VMO}(\GG)$ and $R\in\D^{t}_{k}$. Then there are four sets $F_{j}^{R}$ and $E_{j}^{R}$ $(j=1,2)$ such that
$$|R|^{-\frac{1}{2}} \,\Big|\int_{R}b(y)h^{t}_{R}(y)dy\Big|\lesssim\sum_{j=1}^{2}|\inner{[M_{b},T](f_{F_{j}^{R}}), e_{E_{j}^{R}}}|,$$
where
\begin{align*}
f_{F_{j}^{R}}:=|R|^{-\frac{1}{2}}1_{F_{j}^{R}}\quad{\rm and}\quad
e_{F_{j}^{R}}:=|R|^{-\frac{1}{2}}1_{E_{j}^{R}}.
\end{align*}
\end{lemma}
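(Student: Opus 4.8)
The plan is to reduce the size of the Haar coefficient $\big|\int_R b\,h_R^t\big|$ to a commutator pairing by exploiting the non-degeneracy of the kernel in the form given by \Cref{cube-nondegenerate}. First I would fix $R\in\D_k^t$ and let $\widehat R\in\D_k^t$ be the companion cube supplied by \Cref{cube-nondegenerate}, so that $K$ does not change sign on $R\times\widehat R$ and $|K(x,y)|\gtrsim|R|^{-1}$ there. Since the Haar function $h_R^t$ is constant on the children of $R$ and has mean zero, the quantity $|R|^{-1/2}|\int_R b\,h_R^t|$ is comparable (by \eqref{equinorm} in the finite-dimensional setting) to an average over $R$ of $|b-b_{R}|$-type oscillation; more precisely $h_R^t$ takes two opposite-sign constant values on a pair of disjoint subsets $R_+,R_-\subset R$ with $|R_+|\simeq|R_-|\simeq|R|$, and $\int_R b\,h_R^t \simeq |R|^{-1/2}\big(\fint_{R_+}b - \fint_{R_-}b\big)|R|$. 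Thus the goal becomes controlling $\big|\fint_{R_+}b-\fint_{R_-}b\big|$ by commutator pairings.

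The second step is the standard Janson--Wolff/Rochberg--Semmes device adapted to $\GG$: test the commutator on indicator functions supported in $\widehat R$ and paired with indicators supported in $R_+$ or $R_-$. Concretely, for $\varphi=|R|^{-1/2}1_{\widehat R}$ and $\psi=|R|^{-1/2}1_{S}$ with $S\in\{R_+,R_-\}$, we have
\begin{align*}
\inner{[M_b,T]\varphi,\psi} = |R|^{-1}\int_S\int_{\widehat R}\big(b(x)-b(y)\big)K(y^{-1}x)\,dy\,dx.
\end{align*}
Because $K$ has a fixed sign and size $\gtrsim|R|^{-1}$ on $R\times\widehat R$ (and hence on $S\times\widehat R$), the inner integral $\int_S\int_{\widehat R} K(y^{-1}x)\,dy\,dx$ has absolute value $\gtrsim 1$; writing $b(x)-b(y)=(b(x)-b_{\widehat R})-(b(y)-b_{\widehat R})$ and $b_S-b_{\widehat R}$ factors out against this nonvanishing integral. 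Taking the difference of the pairings for $S=R_+$ and $S=R_-$, the common $b_{\widehat R}$ terms cancel in a controlled way and one is left with
\begin{align*}
\big|b_{R_+}-b_{R_-}\big| \lesssim \big|\inner{[M_b,T]\varphi,\psi_+}\big| + \big|\inner{[M_b,T]\varphi,\psi_-}\big| + (\text{error terms}),
\end{align*}
where the error terms involve $\fint_{\widehat R}|b-b_{\widehat R}|$ and $\fint_{R}|b-b_R|$. These mean-oscillation error terms are themselves bounded by pairings of the same type (either by absorbing into the left side using that $b\in{\rm VMO}$ makes them small at a fixed scale relative to the main term, or by a further application of the kernel lower bound replacing $\widehat R$ by its own companion), producing the four sets $F_1^R,F_2^R,E_1^R,E_2^R$ in the statement — namely $F_j^R=\widehat R$ (or its companion) and $E_j^R\in\{R_+,R_-\}$.

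I expect the main obstacle to be the bookkeeping around the mean-oscillation error terms: after isolating $b_{R_+}-b_{R_-}$ one genuinely needs to dispose of terms like $\int_{\widehat R}|b(y)-b_{\widehat R}|\,dy$ and $\int_{R}|b(x)-b_R|\,dx$, and showing these are dominated by (a bounded number of) commutator pairings $|\inner{[M_b,T]f,e}|$ with indicator data requires either iterating the non-degeneracy argument or a cancellation argument exploiting that $K$ integrates to something of size $\gtrsim 1$ over the relevant product set. One must also be careful that $R_+,R_-$ have comparable measure to $R$ so that the normalisations $|R|^{-1/2}$ are the correct ones, and that $\widehat R$ indeed lies at bounded dyadic distance so the kernel size on $R\times\widehat R$ is uniform in $R$; both are guaranteed by \Cref{cube-nondegenerate} and the cube-system axioms (d)–(e). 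The sign condition \eqref{cube-not-change-sign} is what prevents cancellation in $\int_S\int_{\widehat R}K$, so it is essential and not merely technical. Once these pieces are assembled, collecting at most two pairings for the main difference and at most two more for the error terms yields the asserted bound with the four designated sets.
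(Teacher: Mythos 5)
Your outline correctly identifies the role of \Cref{cube-nondegenerate} (pick the companion cube, use the sign condition and the lower bound on $K$, test the commutator against normalised indicators), but it takes a genuinely different route from the paper and that route has an unresolved gap.

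The paper does not decompose $R$ into $R_+,R_-$ according to the sign of the Haar function and pair with $1_{\widehat R}$. Instead it uses a \emph{median-value} device: it picks a real number $m(b)$ which is a median of $b$ relative to the companion cube $\tilde R$ (so that $\{y\in\tilde R:b(y)>m(b)\}$ and $\{y\in\tilde R:b(y)<m(b)\}$ each have measure $\leq\tfrac12|R|$), then defines $F_1^R,F_2^R\subset\tilde R$ as the pieces where $b\leq m(b)$ and $b\geq m(b)$ (each with $|F_j^R|=\tfrac12|R|$) and $E_1^R,E_2^R\subset R$ as the pieces where $b\geq m(b)$ and $b\leq m(b)$. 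The point is that for $(x,y)\in E_j^R\times F_j^R$ one has simultaneously that $b(x)-b(y)$ has a fixed sign \emph{and} $|b(x)-m(b)|\leq|b(x)-b(y)|$. Combined with $\|h_R^t\|_\infty\simeq|R|^{-1/2}$ and the zero mean of $h_R^t$, this gives $|\int_R b\,h_R^t|\lesssim|R|^{-1/2}\int_R|b-m(b)|$, and then the double integral $\int_{E_j^R}\int_{F_j^R}|b(x)-m(b)|\,|K(x,y)|\,dy\,dx$ is \emph{exactly} (not up to error terms) equal to $|R|\,|\langle[M_b,T]f_{F_j^R},e_{E_j^R}\rangle|$ after the upgrade $|b(x)-m(b)|\leq|b(x)-b(y)|$ and removing the absolute values by the two sign conditions. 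There are no mean-oscillation error terms at all.

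In your version, the error terms $\fint_{\widehat R}|b-b_{\widehat R}|$ and $\fint_{R}|b-b_R|$ do appear, and neither of the two remedies you suggest works. The VMO assumption only says the mean oscillation tends to zero as the scale tends to zero; it gives no quantitative smallness at a fixed scale $2^{-k}$, and in any case the lemma must hold with a constant uniform over all $R\in\D^t_k$ and all $k$, since it is subsequently summed in $k$ to produce the Schatten estimate. The iteration remedy (replace $\widehat R$ by its own companion) generates a new mean-oscillation error at each step and never terminates. So the sets you propose, $F_j^R=\widehat R$ and $E_j^R\in\{R_+,R_-\}$, do not yield the stated bound with only four pairings; the paper's median-based choice of $E_j^R,F_j^R$ is the ingredient you are missing, and it is essential precisely because it makes the reduction exact rather than exact-up-to-error.
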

\begin{proof}
By assumption, for $R\in\D^{t}_{k}$, we find a cube $\tilde{R}\in\D^{t}_{k}$ satisfying \eqref{cube-not-change-sign} and \eqref{cube-lower-bd}.
Pick a real number $m(b)$ such that
$$\left|\set{y\in\tilde{R}: b(y)> m(b)}\right|\leq\frac{1}{2}|R|\quad {\rm and}\quad
\left|\set{y\in\tilde{R}: b(y)< m(b)}\right|\leq\frac{1}{2}|R|.$$
This number $m(b)$ is called a median value; it always exists, but may not be unique (see \cite{Journe1983}).
Let $F_{1}^{R}$ and $ F_{2}^{R}$ be two measurable disjoint subsets of $\tilde{R}$ such that $F_{1}^{R}\cup F_{2}^{R}=\tilde{R}$,
\begin{align*}
F_{1}^{R}\subset \set{y\in\tilde{R}: b(y)\leq m(b)}\quad {\rm and}\quad F_{2}^{R}\subset\set{y\in\tilde{R}: b(y)\geq m(b)}
\end{align*}
and that
$|F_{1}^{R}|=\frac{1}{2}|R|~ {\rm and}~ F_{2}^{R}=\frac{1}{2}|R|$.
Moreover, set
\begin{align*}
E_{1}^{R}=\set{x\in R: b(x)\geq m(b)}\quad {\rm and}\quad E_{2}^{R}=\set{x\in R: b(x)\leq m(b)}.
\end{align*}
Note that, for $(x,y)\in E_{j}\times F_{j}$ ($j\in\{1,2\}$), we have
\begin{equation}
	\label{not-change-sign-b} 
	b(x)-b(y)\; \mbox{ does not change sign}
\end{equation}
and
\begin{equation}\label{difference-mean}
	 \quad|b(x)-m(b)|\leq |b(x)-b(y)|.
\end{equation}

Employing the facts $\Norm{h^{t}_{R}}_{\infty}\simeq|R|^{-\frac{1}{2}}$ and $R=E_{1}^{R}\cup E_{2}^{R}$, we obtain
\begin{align*}
\left|\int_{R}b(x)h^{t}_{R}(x)dx\right|&=\left|\int_{R}(b(x)-m(b))h^{t}_{R}(x)dx\right|\\
&\lesssim|R|^{-\frac{1}{2}}\int_{R}|b(x)-m(b)|dx\\
&=|R|^{-\frac{1}{2}}\int_{E_{1}^{R}}|b(x)-m(b)|dx+|R|^{-\frac{1}{2}}\int_{E_{2}^{R}}|b(x)-m(b)|dx.
\end{align*}
It suffices to estimate the last two integrals. For $j\in\{1,2\}$, we deduce that
\begin{align*}
\int_{E_{j}^{R}}|b(x)-m(b)|dx
&=\int_{E_{j}^{R}}\frac{2|F_{j}^{R}|}{|R|}|b(x)-m(b)|dx
=2\int_{E_{j}^{R}}\int_{F_{j}^{R}}\frac{1}{|R|}dy|b(x)-m(b)|dx\\
&\lesssim\int_{E_{j}^{R}}\int_{F_{j}^{R}}|b(x)-m(b)||K(x,y)|dydx,
\end{align*}
where we use \eqref{cube-lower-bd} for the last inequality.
Moreover, by \eqref{not-change-sign-b}, \eqref{difference-mean} and \eqref{cube-not-change-sign}, we have
\begin{align*}
	\int_{E_{j}^{R}}\int_{F_{j}^{R}}|b(x)-m(b)||K(x,y)|dydx
&\leq \int_{E_{j}^{R}}\int_{F_{j}^{R}}|b(x)-b(y)||K(x,y)|dydx\\
&=\left|\int_{E_{j}^{R}}\int_{F_{j}^{R}}(b(x)-b(y))K(x,y)dydx\right|\\
&=|\inner{[M_{b},T](1_{F_{j}^{R}}),1_{E_{j}^{R}}}|\\
&= |R|\;|\inner{[M_{b},T](f_{F_{j}^{R}}), e_{E_{j}^{R}}}| .
\end{align*}
This gives our desired result.
\end{proof}

\begin{lemma}\label{tail}
Assume that $T$ is a Calder\'{o}n--Zygmund singular integral operator satisfying \eqref{not-change-sign} and \eqref{nondegenerate}, and that $b\in {\rm VMO}(\GG)$. If $p\in(1,\infty)$ and $[M_{b},T]\in\L_{p}$, then
\begin{align*}
\Norm{b-\EE^{t}_{N+1}(b)}_{p}\lesssim2^{-\frac{(N+1)\Q}{p}}\Norm{[M_{b},T]}_{\L_{p}},
\end{align*}
where the relevant constant does not depend on $N$.
\end{lemma}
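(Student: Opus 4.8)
The plan is to bound the ``tail'' $b-\EE^t_{N+1}(b)$ in $L^p$ by coupling the martingale difference expansion of $b$ with Lemma \ref{twochi} and the NWO machinery (Lemmas \ref{NWO} and \ref{compact}). First I would write the telescoping decomposition
\begin{align*}
b-\EE^t_{N+1}(b)=\sum_{k=N+1}^{\infty}\bigl(\EE^t_{k+1}(b)-\EE^t_{k}(b)\bigr),
\end{align*}
which converges in $L^p$ since $b\in {\rm VMO}(\GG)\subset L^p_{loc}$ and, more to the point, since $[M_b,T]\in\L_p$ forces enough decay (the convergence $\EE^t_k(b)\to b$ was recorded after the definition of the conditional expectation, at least locally; the point of the lemma is to get a quantitative global rate). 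Because $\EE^t_{k+1}(b)-\EE^t_{k}(b)$ restricted to a cube $R\in\D^t_k$ is a linear combination of the $n_\Q$ Haar functions $h^{t,j}_R$, the equivalence of norms \eqref{equinorm} on this finite-dimensional space gives, cube by cube,
\begin{align*}
\Norm{b-\EE^t_{N+1}(b)}_p^p
=\sum_{k=N+1}^{\infty}\sum_{R\in\D^t_k}\int_R\bigl|\EE^t_{k+1}(b)(y)-\EE^t_{k}(b)(y)\bigr|^p\,dy
\simeq\sum_{k=N+1}^{\infty}\sum_{R\in\D^t_k}|R|^{-\frac p2}\Bigl|\int_R b(y)h^t_R(y)\,dy\Bigr|^p.
\end{align*}

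Next I would invoke Lemma \ref{twochi}: for each $R\in\D^t_k$ there are sets $F^R_j,E^R_j$ ($j=1,2$) with
\begin{align*}
|R|^{-\frac12}\Bigl|\int_R b(y)h^t_R(y)\,dy\Bigr|\lesssim\sum_{j=1}^2\bigl|\inner{[M_b,T](f_{F^R_j}),e_{E^R_j}}\bigr|,
\end{align*}
where $f_{F^R_j}=|R|^{-1/2}1_{F^R_j}$ and $e_{E^R_j}=|R|^{-1/2}1_{E^R_j}$ are supported in $R$ with $L^\infty$-norm $\lesssim|R|^{-1/2}$, hence satisfy the hypothesis of Lemma \ref{NWO} (with any $q>2$, since $\Norm{1_{F^R_j}|R|^{-1/2}}_q\le|R|^{1/q-1/2}$) and so form NWO sequences. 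Therefore, after the trivial rescaling that encodes the scale $2^{-k}$ into the index $R$, we get for each fixed $t$ and each $j$
\begin{align*}
\sum_{k=N+1}^{\infty}\sum_{R\in\D^t_k}\bigl|\inner{[M_b,T](f_{F^R_j}),e_{E^R_j}}\bigr|^p
=\sum_{R\in\D^t,\ \ell(R)\le 2^{-(N+1)}}\bigl|\inner{[M_b,T](f_{F^R_j}),e_{E^R_j}}\bigr|^p
\lesssim\Norm{[M_b,T]}_{\L_p}^p
\end{align*}
by Lemma \ref{compact} applied to the compact operator $[M_b,T]$ and the NWO sequences $\{f_{F^R_j}\}$, $\{e_{E^R_j}\}$ (restricted to the sub-collection of cubes of side length $\le 2^{-(N+1)}$; restricting to a sub-collection only decreases the $\ell^p$ sum, and $\{f_{F^R_j}\}_{R\in\D^t}$ is NWO, which is what Lemma \ref{compact} requires). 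Summing the finitely many indices $t\in\{1,\dots,\T\}$ and $j\in\{1,2\}$ and taking $p$-th roots yields $\Norm{b-\EE^t_{N+1}(b)}_p\lesssim\Norm{[M_b,T]}_{\L_p}$.

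This, however, is weaker than the claimed bound by the factor $2^{-(N+1)\Q/p}$, so the real content — and the main obstacle — is producing that decay in $N$. The mechanism must be a \emph{dilation/translation invariance} argument: the group structure $(\RR^d,\circ)$ with dilations $\delta_r$ acts on everything in sight. Conjugating $[M_b,T]$ by the (unitary up to the Jacobian $r^\Q$) dilation operator $U_r\varphi(x)=r^{\Q/2}\varphi(\delta_r x)$ turns it into $[M_{b\circ\delta_r},T]$ because $T$ is convolution with a kernel homogeneous of degree $-\Q$, hence $U_r[M_b,T]U_r^{-1}=[M_{b_r},T]$ with $b_r=b\circ\delta_r$ (or $b\circ\delta_{r}^{-1}$, depending on convention), and this conjugation is isometric on every $\L_p$. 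Under $\delta_{2^{-(N+1)}}$ the cube system $\D^t$ at scales $\le 2^{-(N+1)}$ is carried to $\D^t$ at scales $\le 1$, and the tail $b-\EE^t_{N+1}(b)$ is carried to $(b_r-\EE^t_{0}(b_r))$ with an $L^p$-norm scaled by exactly $2^{-(N+1)\Q/p}$ (this is the Jacobian of $\delta_{2^{-(N+1)}}$ to the power $1/p$). Running the NWO estimate of the previous paragraph on the rescaled data — where now the cubes all have side length $\le 1$ and Lemma \ref{compact} is applied with the single, scale-independent constant it provides — gives
\begin{align*}
\Norm{b-\EE^t_{N+1}(b)}_p=2^{-\frac{(N+1)\Q}{p}}\Norm{b_r-\EE^t_0(b_r)}_p\lesssim 2^{-\frac{(N+1)\Q}{p}}\Norm{[M_{b_r},T]}_{\L_p}=2^{-\frac{(N+1)\Q}{p}}\Norm{[M_b,T]}_{\L_p},
\end{align*}
with constants independent of $N$. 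I would need to check carefully that Lemma \ref{compact}'s constant genuinely does not see the scale — which it shouldn't, being a statement about NWO sequences and an abstract compact operator — and that the median-value/set-selection construction in Lemma \ref{twochi} commutes with dilation (it does, being defined purely in terms of the measure and the ordering of $b$-values). The delicate point to get right is the bookkeeping of which collection of cubes survives the truncation at level $N+1$ after rescaling, and ensuring the NWO property is preserved under the truncation to a sub-collection.
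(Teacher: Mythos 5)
Your overall strategy (expand across scales, use \eqref{equinorm}, Lemma \ref{twochi}, NWO sequences, Lemma \ref{compact}) is the paper's strategy, but two algebra slips in the first display have hidden the geometric decay, after which you reach for a dilation argument that is both unnecessary and, as stated, incorrect for the class of operators in the lemma.

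The first slip is the identity
\begin{align*}
\Norm{b-\EE^t_{N+1}(b)}_p^p=\sum_{k\ge N+1}\sum_{R\in\D^t_k}\int_R\bigl|\EE^t_{k+1}(b)(y)-\EE^t_k(b)(y)\bigr|^p\,dy .
\end{align*}
This is the martingale Pythagoras identity and is valid only for $p=2$; for general $p\in(1,\infty)$ it is false (the differences $\EE^t_{k+1}(b)-\EE^t_k(b)$ are not disjointly supported, and $L^p$ is not an inner-product space). The paper needs nothing this strong: it uses only the triangle inequality $\Norm{b-\EE^t_{N+1}(b)}_p\le\sum_{k\ge N+1}\Norm{\EE^t_{k+1}(b)-\EE^t_k(b)}_p$, one scale at a time. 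The second slip is in your ``$\simeq$'' step. Since $\int_R=|R|\fint_R$ and \eqref{equinorm} controls the \emph{average}, one has
\begin{align*}
\int_R\bigl|\EE^t_{k+1}(b)(y)-\EE^t_k(b)(y)\bigr|^p\,dy\simeq|R|\cdot|R|^{-\frac p2}\Bigl|\int_R b(y)\,h^t_R(y)\,dy\Bigr|^p,
\end{align*}
but you wrote $|R|^{-p/2}\bigl|\int_R b\,h^t_R\bigr|^p$ alone, dropping the factor $|R|\simeq2^{-k\Q}$. That factor is exactly the weight $2^{-k\Q/p}$ which, once carried along, makes the $k$-sum converge geometrically: $\sum_{k\ge N+1}2^{-k\Q/p}\lesssim2^{-(N+1)\Q/p}$. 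Once it is restored, the argument closes precisely as in the paper: triangle inequality in $L^p$, then \eqref{equinorm} and Lemma \ref{twochi}, then Lemma \ref{compact} for each fixed $k$ (yielding $\bigl(\sum_{R\in\D^t_k}|\inner{[M_b,T]f_{F^R_j},e_{E^R_j}}|^p\bigr)^{1/p}\lesssim\Norm{[M_b,T]}_{\L_p}$ uniformly in $k$, since $\D^t_k\subset\D^t$), and finally the geometric series. No rescaling is needed, and the perceived missing decay is an artifact of the two slips.

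The dilation step is also not available at the stated generality: the identity $U_r[M_b,T]U_r^{-1}=[M_{b\circ\delta_r},T]$ relies on $U_rTU_r^{-1}=T$, which holds only when the kernel $K$ is \emph{exactly} homogeneous of degree $-\Q$. That is true for the Riesz transforms, but Lemma \ref{tail} is asserted for a general Calder\'on--Zygmund operator whose kernel is assumed to satisfy only size, regularity and the non-degeneracy conditions \eqref{not-change-sign}--\eqref{nondegenerate}; such kernels need not be homogeneous, so $T$ need not commute with the dilations. You would additionally need each adjacent system $\D^t$ to be carried onto itself by $\delta_{2^{-(N+1)}}$ with $\EE^t_0(b\circ\delta_r)=\EE^t_{N+1}(b)\circ\delta_r$, an extra structural hypothesis beyond the listed properties of the dyadic systems. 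Removing the dilation argument and repairing the two algebra slips recovers the paper's proof.
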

\begin{proof}
Note that  $\EE^{t}_{k}(b)$ tends to $b$ in the sense of $L^{p}$-norm when $k$ tends to $\infty$.
Using dyadic decomposition on $\GG$, we have
\begin{align*}
\Norm{b-\EE^{t}_{N+1}(b)}_{p}
&\leq\sum_{k=N+1}^{\infty}\Norm{\EE^{t}_{k+1}(b)-\EE^{t}_{k}(b)}_{p}
=\sum_{k=N+1}^{\infty}\Bigg(\sum_{R\in\D^{t}_{k}}2^{-k\Q}\fint_{R}|\EE^{t}_{k+1}(b)(y)-\EE^{t}_{k}(b)(y)|^{p}dy\Bigg)^{\frac{1}{p}}.
\end{align*}
Therefore, by inequality (\ref{equinorm}) and Lemma \ref{twochi},
\begin{align*}
\Norm{b-\EE^{t}_{N+1}(b)}_{p}
&\lesssim\sum_{k=N+1}^{\infty}2^{-\frac{k\Q}{p}}\Big(\sum_{R\in\D^{t}_{k}}(|R|^{-\frac{1}{2}}|\int_{R}b(y)h^{t}_{R}(y)dy|)^{p}\Big)^{\frac{1}{p}}\\
&\lesssim\sum_{j=1}^{2}\sum_{k=N+1}^{\infty}2^{-\frac{k\Q}{p}}\Big(\sum_{R\in\D^{t}_{k}}|\inner{[M_{b},T](f_{F_{j}^{R}}), e_{E_{j}^{R}}}|^{p}\Big)^{\frac{1}{p}}.
\end{align*}
But since $\{f_{F_{j}^{R}}\}_{R\in\D^{t}}$ and $\{e_{E_{j}^{R}}\}_{R\in\D^{t}}$ are nearly weak orthonormal sequence due to Lemma \ref{NWO}, we apply Lemma \ref{compact} to get 
$$ \sum_{j=1}^{2}\sum_{k=N+1}^{\infty}2^{-\frac{k\Q}{p}}\Big(\sum_{R\in\D^{t}_{k}}|\inner{[M_{b},T](f_{F_{j}^{R}}), e_{E_{j}^{R}}}|^{p}\Big)^{\frac{1}{p}} \lesssim2^{-\frac{(N+1)\Q}{p}}\Norm{[M_{b},T]}_{\L_{p}}.$$
Combining the above inequalities, we conclude the desired assertion.
\end{proof}

\begin{lemma}\label{core}
Keep the assumptions in Lemma \ref{tail}. We have
\begin{align*}
\Big(\sum_{k\in\ZZ}2^{k\Q}\Norm{\EE^{t}_{k+1}(b)-\EE^{t}_{k}(b)}_{p}^{p}\Big)^{\frac{1}{p}}\lesssim\Norm{[M_{b},T]}_{\L_{p}}.
\end{align*}
\end{lemma}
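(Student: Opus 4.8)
The plan is to reuse, scale by scale, the computation already performed inside the proof of Lemma~\ref{tail}, but now summing over \emph{all} generations $k\in\ZZ$ instead of only $k\ge N+1$, and to invoke Lemma~\ref{compact} once on the whole family $\D^{t}$. First I would fix $k\in\ZZ$ and use property~(d) of the cube system, which gives $|R|\simeq 2^{-k\Q}$ for every $R\in\D^{t}_{k}$, to write
\[
2^{k\Q}\Norm{\EE^{t}_{k+1}(b)-\EE^{t}_{k}(b)}_{p}^{p}
=\sum_{R\in\D^{t}_{k}}2^{k\Q}\int_{R}\big|\EE^{t}_{k+1}(b)(y)-\EE^{t}_{k}(b)(y)\big|^{p}\,dy
\simeq\sum_{R\in\D^{t}_{k}}\fint_{R}\big|\EE^{t}_{k+1}(b)(y)-\EE^{t}_{k}(b)(y)\big|^{p}\,dy .
\]
Applying the finite-dimensional norm equivalence \eqref{equinorm} on each $R$ and then Lemma~\ref{twochi}, I bound the last sum by
\[
\lesssim\sum_{R\in\D^{t}_{k}}\Big(|R|^{-\frac{1}{2}}\Big|\int_{R}b(y)h^{t}_{R}(y)\,dy\Big|\Big)^{p}
\lesssim\sum_{j=1}^{2}\sum_{R\in\D^{t}_{k}}\big|\inner{[M_{b},T]f_{F_{j}^{R}},\,e_{E_{j}^{R}}}\big|^{p} .
\]

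Next I would sum the resulting bound over $k\in\ZZ$. Since $\D^{t}=\bigcup_{k\in\ZZ}\D^{t}_{k}$ is a disjoint union over generations, the right-hand side collapses to $\sum_{j=1}^{2}\sum_{R\in\D^{t}}|\inner{[M_{b},T]f_{F_{j}^{R}},e_{E_{j}^{R}}}|^{p}$. Now $b\in{\rm VMO}(\GG)$ forces $[M_{b},T]$ to be compact on $L^{2}(\GG)$ by the result of \cite{CDLW2019} recalled above, while $\{f_{F_{j}^{R}}\}_{R\in\D^{t}}$ and $\{e_{E_{j}^{R}}\}_{R\in\D^{t}}$ are NWO sequences by Lemma~\ref{NWO}: each is of the form $|R|^{-\frac{1}{2}}1_{E}$ with $E$ contained in a cube of $\D^{t}_{k}$ (namely $R$ itself or the nearby cube $\wh{R}$ of Lemma~\ref{cube-nondegenerate}), so that $\Norm{\,\cdot\,}_{q}\le|R|^{\frac{1}{q}-\frac{1}{2}}$ for every $q>2$ after using $|\wh{R}|\simeq|R|$. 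Hence Lemma~\ref{compact} applies, for $j=1,2$, and yields
\[
\Big(\sum_{R\in\D^{t}}\big|\inner{[M_{b},T]f_{F_{j}^{R}},e_{E_{j}^{R}}}\big|^{p}\Big)^{\frac{1}{p}}\lesssim\Norm{[M_{b},T]}_{\L_{p}} .
\]
Chaining the three displays gives $\big(\sum_{k\in\ZZ}2^{k\Q}\Norm{\EE^{t}_{k+1}(b)-\EE^{t}_{k}(b)}_{p}^{p}\big)^{\frac{1}{p}}\lesssim\Norm{[M_{b},T]}_{\L_{p}}$, which is exactly the assertion.

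The one step that requires genuine (if minor) care — and the one I would write out in full — is the NWO claim for $\{f_{F_{j}^{R}}\}_{R\in\D^{t}}$, since $F_{j}^{R}$ sits inside the auxiliary cube $\wh{R}$ of Lemma~\ref{cube-nondegenerate} rather than inside $R$ itself, while Lemma~\ref{NWO} is phrased for functions supported in their own index cube. I would resolve this by observing that at each fixed generation $k$ the assignment $R\mapsto\wh{R}$ is boundedly many-to-one, because $\rho(c_{R}\rp c_{\wh{R}})\le A'_{4}2^{-k}$ confines $\wh{R}$ to a neighbourhood of $R$ of a bounded number of cube-widths; thus the family splits into a bounded number of subfamilies, each genuinely supported in its own index cube, to which Lemma~\ref{NWO} applies verbatim, and a finite sum of NWO sequences preserves the conclusion of Lemma~\ref{compact} up to a constant. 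This is precisely the point already glossed over in the proof of Lemma~\ref{tail}, so beyond it I anticipate no further obstacle: the whole argument is a rearrangement of estimates already in hand.
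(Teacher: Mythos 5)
Your proof follows the same route as the paper's: reduce the generation-$k$ block to a Haar coefficient via \eqref{equinorm}, bound it by the two commutator pairings via Lemma~\ref{twochi}, and then feed the result into Lemma~\ref{compact} using the NWO property from Lemma~\ref{NWO}; the paper merely phrases this through finite truncations $F_{L,N}$ and a limit $L\to-\infty$, $N\to+\infty$, which is cosmetically different but not a different argument. Your extra remark that $\{f_{F_{j}^{R}}\}_{R\in\D^{t}}$ is supported in the companion cube $\wh{R}$ rather than in $R$ itself, so that one must re-index (or split into boundedly many subfamilies) before invoking Lemma~\ref{NWO}, is a correct and worthwhile clarification of a point the paper passes over in silence.
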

\begin{proof}
For infinite sum, it suffices to treat its arbitrary finite sum. Without loss of generality, fix a large positive integer $N$ and a negative integer $L$. Denote
\begin{align*}
F_{L,N}=\sum_{k=L}^{N}2^{k\Q}\Norm{\EE^{t}_{k+1}(b)-\EE^{t}_{k}(b)}_{p}^{p}.
\end{align*}
By inequality (\ref{equinorm}),
\begin{align*}
F_{L,N}=\sum_{k=L}^{N}\sum_{R\in\D^{t}_{k}}\fint_{R}|\EE^{t}_{k+1}(b)(y)-\EE^{t}_{k}(b)(y)|^{p}dy
\lesssim\sum_{k=L}^{N}\sum_{R\in\D^{t}_{k}}\Big(|R|^{-\frac{1}{2}}|\int_{R}b(y)h^{t}_{R}(y)dy|\Big)^{p}.
\end{align*}
Repeating the steps in the proof of Lemma \ref{tail}, we have
$$F_{L,N}   \lesssim   \sum_{k=L}^{N} \sum_{R\in\D^{t}_{k}}|\inner{[M_{b},T](f_{F_{j}^{R}}), e_{E_{j}^{R}}}|^{p} . $$
By Lemma \ref{compact} again,
\begin{align*}
F_{L,N}\lesssim\Norm{[M_{b},T]}_{\L_{p}} ^p.
\end{align*}
Here, the relevant constant does not depend on $L,N$. Letting $L\rightarrow-\infty$ and $N\rightarrow+\infty$ gives the desired result.
\end{proof}

\begin{lemma}\label{final}
Keep the assumptions in Lemma \ref{tail}. We have
\begin{align*}
\Big(\sum_{k\in\ZZ}2^{k\Q}\Norm{b-\EE^{t}_{k}(b)}_{p}^{p}\Big)^{\frac{1}{p}}\lesssim\Norm{[M_{b},T]}_{\L_{p}} .
\end{align*}
\end{lemma}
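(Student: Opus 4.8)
The plan is to deduce Lemma~\ref{final} from Lemma~\ref{core} and Lemma~\ref{tail} by a telescoping argument on the dyadic scales. The starting point is the pointwise (in $k$) identity
\begin{align*}
b-\EE^{t}_{k}(b)=\sum_{m=k}^{\infty}\big(\EE^{t}_{m+1}(b)-\EE^{t}_{m}(b)\big),
\end{align*}
valid in $L^{p}$ since $\EE^{t}_{m}(b)\to b$ as $m\to\infty$; here I would first dispose of the convergence issue by noting that Lemma~\ref{tail} guarantees $b-\EE^{t}_{N+1}(b)\in L^{p}$ with norm $\lesssim 2^{-(N+1)\Q/p}\|[M_{b},T]\|_{\L_{p}}$, which in particular shows the tail is well controlled and the series converges. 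Then I would take $\Norm{\cdot}_{p}$, use the triangle inequality, and weight by $2^{k\Q/p}$ to bring the sum into a form amenable to a discrete Hardy-type inequality.

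The key estimate is the following: writing $a_{m}:=2^{m\Q/p}\Norm{\EE^{t}_{m+1}(b)-\EE^{t}_{m}(b)}_{p}$, Lemma~\ref{core} says exactly $\sum_{m\in\ZZ}a_{m}^{p}\lesssim\Norm{[M_{b},T]}_{\L_{p}}^{p}$, and we want $\sum_{k\in\ZZ}\big(2^{k\Q/p}\Norm{b-\EE^{t}_{k}(b)}_{p}\big)^{p}\lesssim\Norm{[M_{b},T]}_{\L_{p}}^{p}$. From the telescoping identity,
\begin{align*}
2^{k\Q/p}\Norm{b-\EE^{t}_{k}(b)}_{p}\leq\sum_{m=k}^{\infty}2^{k\Q/p}\Norm{\EE^{t}_{m+1}(b)-\EE^{t}_{m}(b)}_{p}=\sum_{m=k}^{\infty}2^{-(m-k)\Q/p}a_{m},
\end{align*}
which exhibits the left side as a convolution of $(a_{m})$ with the summable kernel $2^{-j\Q/p}1_{j\geq 0}$. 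Taking $\ell^{p}$ norms in $k$ and applying Young's inequality for convolutions on $\ZZ$ (with the kernel in $\ell^{1}$, since $\Q/p>0$) gives
\begin{align*}
\Big(\sum_{k\in\ZZ}\big(2^{k\Q/p}\Norm{b-\EE^{t}_{k}(b)}_{p}\big)^{p}\Big)^{1/p}\leq\Big(\sum_{j\geq 0}2^{-j\Q/p}\Big)\Big(\sum_{m\in\ZZ}a_{m}^{p}\Big)^{1/p}\lesssim\Norm{[M_{b},T]}_{\L_{p}},
\end{align*}
as desired, after raising to the $p$-th power and noting $2^{k\Q/p\cdot p}=2^{k\Q}$.

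One point that needs care is that Lemma~\ref{core} is proved by passing to finite sums over $k\in[L,N]$ and letting $L\to-\infty$, $N\to+\infty$; the same truncation should be carried out here, proving the bound for $\sum_{k=L}^{N}$ uniformly in $L,N$ before taking limits, so that no convergence of the full bilateral series is assumed prematurely. Also, strictly speaking the convolution estimate only needs the one-sided kernel, so monotone convergence handles the limit cleanly. I do not expect a serious obstacle here: this lemma is essentially a bookkeeping step repackaging the martingale-difference bound of Lemma~\ref{core} into the oscillation quantity $\Norm{b-\EE^{t}_{k}(b)}_{p}$ that will feed directly into the Besov semi-norm comparison. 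The only mild subtlety is making sure the weighted triangle inequality and Young's inequality are applied with the exponent $p$ possibly less than $1$ — in that regime one uses the $p$-triangle inequality $\Norm{\sum f_{m}}_{p}^{p}\leq\sum\Norm{f_{m}}_{p}^{p}$ together with the analogous summation over the kernel, which still closes because $\sum_{j\geq 0}2^{-j\Q p/p}=\sum_{j\geq 0}2^{-j\Q}<\infty$; I would remark on this so the argument covers all $p\in(1,\infty)$ (and in fact would go through for $0<p<1$ as well, matching the range where Lemma~\ref{tail}'s hypotheses are available).
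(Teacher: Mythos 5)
Your proof is correct, and it does essentially the same job, but organizes the dyadic summation differently from the paper. The paper uses a \emph{one-step} telescoping
\[
b-\EE^{t}_{k}(b)=\bigl(b-\EE^{t}_{k+1}(b)\bigr)+\bigl(\EE^{t}_{k+1}(b)-\EE^{t}_{k}(b)\bigr),
\]
takes the weighted $\ell^p$ norm over $k\in[L,N]$, re-indexes the first piece, and then absorbs it back into the left-hand side using $2^{-\Q/p}<1$. That absorption leaves a boundary term $2^{N\Q/p}\Norm{b-\EE^{t}_{N+1}(b)}_{p}$, which is killed as $N\to\infty$ precisely by the quantitative decay in Lemma~\ref{tail}; so the paper uses Lemma~\ref{tail} essentially, not merely as a convergence statement. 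You instead use the \emph{full} telescoping $b-\EE^{t}_{k}(b)=\sum_{m\geq k}(\EE^{t}_{m+1}(b)-\EE^{t}_{m}(b))$ and recognize the resulting weighted sum as a one-sided convolution with the $\ell^1$ kernel $2^{-j\Q/p}\unit_{j\geq 0}$, then invoke Young's inequality. This avoids the absorption trick entirely and only needs $\EE^{t}_{M}(b)\to b$ in $L^p$ to justify the series; Lemma~\ref{tail} then becomes a qualitative input (justifying convergence) rather than a quantitative one. Both routes are standard; yours is arguably cleaner and more transparent, while the paper's keeps the roles of Lemmas~\ref{tail} and~\ref{core} sharply visible as "boundary" and "bulk" terms. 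One small remark: your closing parenthetical about $0<p<1$ is out of scope here since Lemma~\ref{tail}, and hence Lemma~\ref{final}, is only stated for $p\in(1,\infty)$, so the $p$-triangle variant is never actually needed; you flagged this yourself, so it is harmless.
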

\begin{proof}
Fix a large positive integer $N$ and a negative integer $L$. Using the triangle inequality, we obtain
\begin{align*}
\Big(\sum_{k=L}^{N}2^{k\Q}\Norm{b-\EE^{t}_{k}(b)}_{p}^{p}\Big)^{\frac{1}{p}} &=  \Big(\sum_{k=L}^{N}2^{k\Q}\Norm{b-\EE^{t}_{k+1}(b)+\EE^{t}_{k+1}(b)-\EE^{t}_{k}(b)}_{p}^{p}\Big)^{\frac{1}{p}}   \\
&\leq2^{\frac{N\Q}{p}}\Norm{b-\EE^{t}_{N+1}(b)}_{p}
+2^{-\frac{\Q}{p}}\Big(\sum_{k=L}^{N}2^{k\Q}\Norm{b-\EE^{t}_{k}(b)}_{p}^{p}\Big)^{\frac{1}{p}}\\
&\qquad+\Big(\sum_{k=L}^{N}2^{k\Q}\Norm{\EE^{t}_{k+1}(b)-\EE^{t}_{k}(b)}_{p}^{p}\Big)^{\frac{1}{p}}.
\end{align*}
Note that $0<1-2^{-\frac{\Q}{p}}<1$ for $p\in(1,\infty)$. Therefore,
\begin{align*}
\Big(\sum_{k=L}^{N}2^{k\Q}\Norm{b-\EE^{t}_{k}(b)}_{p}^{p}\Big)^{\frac{1}{p}}
\lesssim2^{\frac{N\Q}{p}}\Norm{b-E_{N+1}(b)}_{p}+\Big(\sum_{k=L}^{N}2^{k\Q}\Norm{\EE^{t}_{k+1}(b)-\EE^{t}_{k}(b)}_{p}^{p}\Big)^{\frac{1}{p}}.
\end{align*}
Here the constant only depends on $p$ and $\Q$. Thus, Lemmas \ref{tail} and \ref{core} imply the desired result.
\end{proof}

\begin{proposition}\label{thm1}
Assume that $T$ is a Calder\'{o}n--Zygmund singular integral operator satisfying \eqref{not-change-sign} and \eqref{nondegenerate}, and that $b\in {\rm VMO}(\GG)$. If $p\in(\Q,\infty)$ and $[M_{b},T]\in\L_{p}$, then 
$$\Norm{b}_{B_{p}^{\frac{\Q}{p}}}\lesssim\Norm{[M_{b},T]}_{\L_{p}}.$$
\end{proposition}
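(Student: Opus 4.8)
The plan is to pass from the dyadic square-function estimate established in Lemma \ref{final} to the continuous Besov semi-norm $\Norm{b}_{B_p^{\Q/p}}$. The natural starting point is the identity
\begin{align*}
\Norm{b}_{B_p^{\Q/p}}^p = \int_\GG \frac{\Norm{\lambda_w b - b}_p^p}{\rho(w)^{\Q}}\frac{dw}{\rho(w)^{\Q}},
\end{align*}
so I need to control the right-hand side by $\sum_{k\in\ZZ} 2^{k\Q}\Norm{b-\EE_k^t(b)}_p^p$, summed over the finitely many adjacent systems $t=1,\dots,\T$. First I would slice the $w$-integral dyadically: $\int_\GG = \sum_{k\in\ZZ}\int_{\rho(w)\sim 2^{-k}}$. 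On each annulus $\rho(w)\sim 2^{-k}$ one has $\rho(w)^{-2\Q}\sim 2^{2k\Q}$ while the measure of the annulus is $\sim 2^{-k\Q}$, so that dyadic block contributes roughly $2^{k\Q}\sup_{\rho(w)\sim 2^{-k}}\Norm{\lambda_w b-b}_p^p$.

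The core step is therefore the pointwise-in-scale bound: for $\rho(w)\sim 2^{-k}$,
\begin{align*}
\Norm{\lambda_w b - b}_p \lesssim \Norm{b-\EE_k^t(b)}_p + \Norm{\lambda_w b - \lambda_w\EE_k^t(b)}_p + \Norm{\EE_k^t(b) - \lambda_w\EE_k^t(b)}_p,
\end{align*}
where $t$ is chosen (via the adjacent-system property) so that the two points $x$ and $w^{-1}x$ lie in a common dyadic cube of $\D^t$ at scale $\sim 2^{-k}$. Since left translation by $w$ is an isometry on $L^p$, the middle term equals $\Norm{b-\EE_k^t(b)}_p$. For the last term, because $x$ and $w^{-1}x$ lie in the same cube $R\in\D_k^t$, we have $\EE_k^t(b)(x) = \EE_k^t(b)(w^{-1}x) = b_R$ for those $x$, so $\EE_k^t(b)(x) - \lambda_w\EE_k^t(b)(x) = 0$ there; the nonzero contribution comes only from the "boundary layer'' of cubes $R$ that meet both $R$ and $w R$ for $\rho(w)\sim 2^{-k}$, which after summing in $R$ is again dominated by a fixed multiple of $\Norm{b-\EE_k^t(b)}_p$ (one compares $b_R$ to $b_{R'}$ for neighbouring cubes, and each difference $|b_R - b_{R'}|$ is bounded, up to the Haar coefficients, by oscillation terms already accounted for in $\sum_k 2^{k\Q}\Norm{b-\EE_k^t(b)}_p^p$). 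Putting these together and summing over $k$ and over the finitely many $t$ yields $\Norm{b}_{B_p^{\Q/p}}^p \lesssim \sum_{t=1}^{\T}\sum_{k\in\ZZ} 2^{k\Q}\Norm{b-\EE_k^t(b)}_p^p \lesssim \Norm{[M_b,T]}_{\L_p}^p$ by Lemma \ref{final}.

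I expect the main obstacle to be the careful bookkeeping in the third term — matching a translation $\lambda_w$ at scale $\rho(w)\sim 2^{-k}$ against a single dyadic system. This is exactly where the adjacent dyadic cube systems $\{\D^t\}_{t=1}^\T$ are needed: for a given $w$ one must select $t$ and a cube in $\D_k^t$ that simultaneously contains $x$ and $w^{-1}x$ (possible only up to a controlled dilation of radii, which is why property (b) of the adjacent system has the factor $C_{adj}$), and then estimate the error coming from cubes straddling a translate. A clean way to organize this is to bound $\Norm{\EE_k^t(b) - \lambda_w\EE_k^t(b)}_p$ by $\Norm{\EE_{k-C}^t(b) - \EE_k^t(b)}_p$ plus $\Norm{b - \EE_k^t(b)}_p$ type terms for a fixed constant $C$ depending only on $A_0$, $\gamma_1$, $\gamma_2$, $C_{adj}$, absorbing the telescoping differences into $\sum_k 2^{k\Q}\Norm{\EE_{k+1}^t(b)-\EE_k^t(b)}_p^p$, which is controlled by Lemma \ref{core}. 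Modulo this geometric lemma relating translates to dyadic expectations — which is standard for spaces of homogeneous type but must be stated with the correct constants — the proposition follows by assembling Lemmas \ref{core} and \ref{final}.
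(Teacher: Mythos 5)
Your overall strategy --- dyadically slicing the Besov integral, reducing to conditional expectation differences, and invoking Lemma \ref{final} --- is close in spirit to the paper's route, and you have correctly located the crux of the argument. But there is a genuine gap in the ``third term'' estimate, i.e.\ controlling $\Norm{\EE^t_k(b) - \lambda_w\EE^t_k(b)}_p$. For a fixed $w$ you cannot choose a single $t$ so that $x$ and $w^{-1}x$ lie in the same cube of $\D^t_k$ for all (or even most) $x$: with $\rho(w)\sim 2^{-k}$ and cubes of side $\sim 2^{-k}$ the ``boundary layer'' is essentially the whole space, and after passing to a coarser scale $k-C$ it is still a constant fraction $\sim 2^{-C}$ of the volume. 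The quantity $\Norm{\EE^t_{k-C}(b)-\lambda_w\EE^t_{k-C}(b)}_p^p$ therefore involves differences $|b_R - b_{R'}|$ over \emph{all} adjacent pairs $R,R'\in\D^t_{k-C}$, including those straddling a coarse-cube boundary. Your proposed bound by $\Norm{\EE_{k-C}^t(b)-\EE_k^t(b)}_p$ and $\Norm{b-\EE_k^t(b)}_p$ within the \emph{same} system $t$ fails on exactly these straddling pairs: take $b$ piecewise constant on $\D^t_{k-C}$ with different values on two coarse cubes sharing a face, and all those square-function terms for system $t$ can vanish while $\Norm{\EE^t_{k-C}(b)-\lambda_w\EE^t_{k-C}(b)}_p$ does not. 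The right geometric fact is that a cube $\tilde R \supset R\cup R'$ at a slightly coarser scale exists, but in general it lives in a \emph{different} system $t'$ of the adjacent family --- so one must sum the square function over all $t'$, and keep track of which $t'$ serves which pair. That can presumably be made to work, but it is considerably heavier than what you sketched.

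The paper avoids this entirely by never forming a translate of a conditional expectation. It covers the double integral $\int_\GG\int_{\rho(y^{-1}x)\lesssim 2^{-k}}|b(x)-b(y)|^p\,dydx$ cube by cube: for each cube $R'$ at a fine scale $k+k_0+3$ of a fixed system $\D^1$, the adjacent-system property yields a $t=t(R')$ and a coarser cube $R\in\D^{t}_k$ containing the whole annulus neighbourhood $U_{R'}$ of $R'$. On $R\times R$ the pointwise split $|b(x)-b(y)|\le |b(x)-b_R|+|b_R-b(y)|$ immediately gives $\int_R\int_R|b(x)-b(y)|^p\lesssim |R|\int_R|b-b_R|^p$, and summing in $R$, $t$, and $k$ produces $\sum_{t}\sum_k 2^{k\Q}\Norm{b-\EE^t_k(b)}_p^p$, after which Lemma \ref{final} closes the argument. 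The system $t$ is chosen locally, per small cube $R'$, so the ``boundary layer'' problem never arises. I suggest reorganising your proof along these lines --- replace the global triangle inequality on $\Norm{\lambda_w b - b}_p$ by a local cube-by-cube cover of the Besov double integral.
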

\begin{proof}
Let $k_{0}>\log_{2}((1+\frac{1}{2}\gamma_{2})A_{0})$ be a fixed integer.
Note that $0<\frac{\Q}{p}<1$. By definition of $B_{p}^{\frac{\Q}{p}}$,
\begin{align}\label{besov-upper-bd}
\Norm{b}_{B_{p}^{\frac{\Q}{p}}}^{p}
\leq2^{2(k_{0}+4)\Q}\sum_{k\in\ZZ}2^{2k\Q}\int_{\GG}\int_{2^{-k-k_{0}-4}\leq\rho(y\rp x)\leq2^{-k-k_{0}-3}}|b(x)-b(y)|^{p}dydx.
\end{align}
Denote
\begin{align*}
J_{k+k_{0}+3}= \int_{\GG}\int_{\rho(y\rp x)\leq2^{-k-k_{0}-3}}|b(x)-b(y)|^{p}dydx.
\end{align*}
For $R'\in\D^{1}_{k+k_{0}+3}$, let 
$$U_{R'}=\set{y\in\GG:\inf_{z\in R'}\rho(y\rp z)<2^{-k-k_{0}-3}}.$$
Then
\begin{align}\label{Jkk3}
\nonumber J_{k+k_{0}+3}
&=\sum_{R'\in\D^{1}_{k+k_{0}+3}}\int_{R'}\int_{\rho(y\rp x)\leq2^{-k-k_{0}-3}}|b(x)-b(y)|^{p}dydx\\
&\leq\sum_{R'\in\D^{1}_{k+k_{0}+3}}\int_{R'}\int_{U_{R'}}|b(x)-b(y)|^{p}dydx.
\end{align}

By the property of dyadic cube, we have $\rho(z\rp c_{R'})\leq\gamma_{2}2^{-k-k_{0}-3}$ for $z\in R'$.
As $y\in U_{R'}$, by definition of infimum, select $z_{0}\in R'$ satisfying $\rho(y\rp z_{0})\leq2^{-k-k_{0}-2}$.
Then
\begin{align*}
\rho(y\rp c_{R'})\leq A_{0}\Big(\rho(y\rp z_{0})+\rho(z_{0}\rp c_{R'})\Big)
\leq(1+\frac{1}{2}\gamma_{2})A_{0}2^{-k-k_{0}-2}\leq2^{-k-2}.
\end{align*}
This implies that
$$U_{R'}\subset B(c_{R'},2^{-k-2}).$$
By the property of adjacent dyadic cube, there is some $t\in\{1,2,\ldots,\T\}$ and $R\in\D^{t}_{k}$ such that
\begin{align*}
U_{R'}\subset B(c_{R'},2^{-k-2})\subset R.
\end{align*}
Note that each $R\in\D^{t}_{k}$ contains at most $2^{(k_{0}+3)\Q}$ cubes $R'\in\D^{1}_{k+k_{0}+3}$. Therefore, by \eqref{Jkk3},
\begin{align*}
J_{k+k_{0}+3}\leq2^{(k_{0}+3)\Q}\sum_{t=1}^{\T}\sum_{R\in\D^{t}_{k}}\int_{R}\int_{R}|b(x)-b(y)|^{p}dydx.
\end{align*}

Writing 
$$|b(x)-b(y)| =  |b(x)-b_{R}+ b_{R}-b(y)|.$$
By inequality \eqref{besov-upper-bd}, the triangle inequality and Lemma \ref{final}, we obtain
\begin{align*}
\Norm{b}_{B_{p}^{\frac{\Q}{p}}}
\lesssim\Big(\sum_{k\in\ZZ}2^{2k\Q}J_{k+k_{0}+3}\Big)^{\frac{1}{p}}
&\lesssim\sum_{t=1}^{\T}\Big(\sum_{k\in\ZZ}2^{2k\Q}\sum_{R\in\D^{t}_{k}}\int_{R}\int_{R}|b(x)-b_{R}|^{p}dydx\Big)^{\frac{1}{p}}\\
&\lesssim\sum_{t=1}^{\T}\Big(\sum_{k\in\ZZ}2^{k\Q}\Norm{b-\EE^{t}_{k}(b)}_{p}^{p}\Big)^{\frac{1}{p}}\\
&\lesssim\Norm{[M_{b},T]}_{\L_{p}}.
\end{align*}
This is the desired result.
\end{proof}

\subsection{Lower bound}\label{mt-mta-lower}
Let $1\leq p,q<\infty$. The mixed norm space $L^{p}(L^{q,\infty})$ is defined as the set of the measurable function $G$ on $\GG\times\GG$ such that
\begin{align*}
\Norm{G}_{L^{p}(L^{q,\infty})}=\Bigg(\int_{\GG}\Norm{G(x,\cdot)}_{L^{q,\infty}(\GG)}^{p}dx\Bigg)^{\frac{1}{p}}<\infty.
\end{align*}
Let $G^{*}(x,y)=\overline{G(y,x)}$ and $p'$ be the conjugate number of $p$. 
It is shown in \cite{JW1982,Russo1977} that, if $G,G^{*}\in L^{p}(L^{p',\infty})$ with $p>2$, then $Af(x):=\int_{\GG}G(x,y)f(y)dy$ gives a compact operator $\L_{p,\infty}$ such that
\begin{align}\label{spsym}
\Norm{A}_{\L_{p,\infty}}\lesssim\max\{\Norm{G}_{L^{p}(L^{p',\infty})},\Norm{G^{*}}_{L^{p}(L^{p',\infty})}\}.
\end{align}

\begin{proposition}\label{thm2}
Assume that $T$ is a Calder\'{o}n--Zygmund singular integral operator. If $p\in(\Q,\infty)$ and $b\in B_{p}^{\frac{\Q}{p}}$, then 
$$\Norm{[M_{b},T]}_{\L_{p}}\lesssim\Norm{b}_{B_{p}^{\frac{\Q}{p}}}.$$
\end{proposition}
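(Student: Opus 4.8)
\textbf{Proof strategy for Proposition \ref{thm2}.} The plan is to decompose the commutator $[M_b,T]$ using the adjacent Haar systems $\{\D^t\}_{t=1}^{\T}$ and the associated conditional expectations, reducing the problem to controlling a sum of operators built from NWO sequences, to which Lemma \ref{compact-upper-bd} (or Lemma \ref{compact-upper-bd-function}) applies. Concretely, write the kernel of $[M_b,T]$ as $(b(x)-b(y))K(x,y)$. On a fixed scale $k$, I would replace $b$ by a martingale-difference expansion $b = \sum_k (\EE^t_{k+1}(b)-\EE^t_k(b))$ (after checking convergence in the relevant topology, using $b\in B_p^{\Q/p}\subset {\rm VMO}$ up to constants, which is harmless for the commutator), and localise the kernel $K(x,y)$ on each dyadic cube $R\in\D^t_k$. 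The off-diagonal decay $|K(x,y)|\lesssim \rho(y^{-1}x)^{-\Q}$ together with the smoothness condition (ii) on $K$ lets one expand, for $(x,y)$ with $x$ near $R$ and $y$ near a translate of $R$, the kernel into a rapidly convergent series indexed by $l\in\ZZ^2$ (relative positions of the cubes containing $x$ and $y$) with coefficients decaying like $(1+|l|)^{-r}$ for any prescribed $r$ — this is exactly the shape required by Lemma \ref{compact-upper-bd-sequence}.

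\textbf{Key steps, in order.} First, reduce to $b$ with compact support and mean zero on a large cube, so all sums are finite (a standard density/limiting argument). Second, for each $t$, write
\[
[M_b,T]=\sum_{k\in\ZZ}[M_{\EE^t_{k+1}(b)-\EE^t_k(b)},T],
\]
and for the $k$-th term, split the integration in $y$ into the part within a bounded multiple of the cube $R\ni x$ (the ``local'' part) and the ``far'' part $\rho(y^{-1}x)\gtrsim 2^{-k}$. Third, for the far part, use the kernel decay and the vanishing moment of the Haar functions (or equivalently that $\EE^t_{k+1}(b)-\EE^t_k(b)$ is a sum of $n_\Q$ Haar functions supported on $R$ with zero integral) to gain a factor $2^{-k\sigma}\rho(y^{-1}x)^{-\sigma}$; summing the geometric series in the relative scale produces NWO sequences $G_{R,l},F_{R,l}$ with the right normalisation (apply Lemma \ref{NWO}: each normalised indicator or Haar-type bump on a cube $R$ of the relevant scale has $\|\cdot\|_q\lesssim |R|^{1/q-1/2}$ for any $q>2$), coefficients $\lambda_R \simeq |R|^{-1/2}|\langle b,h^t_R\rangle|$ from \eqref{equinorm}, and $\gamma_{R,l}$ with the required decay. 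Fourth, for the local part, the kernel singularity is integrable against $|b(x)-b(y)|$ on $R\times R$ after using the size bound $|K|\lesssim\rho^{-\Q}$ and Hölder/Schur-type estimates, again yielding an operator of the NWO form with the same coefficients $\lambda_R$. Fifth, assemble: by Lemma \ref{compact-upper-bd-sequence} (and Lemma \ref{compact-upper-bd} for the local piece),
\[
\Norm{[M_b,T]}_{\L_p}\lesssim\sum_{t=1}^{\T}\Norm{\{\lambda_R\}_{R\in\D^t}}_{\ell^p}
=\sum_{t=1}^{\T}\Big(\sum_{k\in\ZZ}\sum_{R\in\D^t_k}\big(|R|^{-1/2}|\langle b,h^t_R\rangle|\big)^p\Big)^{1/p}.
\]
Sixth, identify the right-hand side with $\Norm{b}_{B_p^{\Q/p}}$: by \eqref{equinorm} this equals (up to constants) $\big(\sum_k 2^{k\Q}\Norm{\EE^t_{k+1}(b)-\EE^t_k(b)}_p^p\big)^{1/p}$, which is a standard martingale/square-function characterisation of the homogeneous Besov norm $B_p^{\Q/p}$ on the space of homogeneous type $\GG$ (equivalently one runs the argument of Lemma \ref{final} and Proposition \ref{thm1} in reverse, comparing the difference-integral definition of $\Norm{b}_{B_p^{\Q/p}}$ with the martingale differences over the $\T$ adjacent systems).

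\textbf{Main obstacle.} The delicate point is the \emph{local} (near-diagonal) part of the kernel: there the vanishing moment of the Haar function does not help, and one must exploit the size estimate together with the finite-dimensionality on each cube $R$ to show the resulting piece is genuinely of NWO type with coefficients $\lambda_R$ no larger than $|R|^{-1/2}|\langle b,h^t_R\rangle|$ — this is where the condition $p>\Q$ enters crucially (it guarantees $\{\lambda_R\}\in\ell^p$ summability of the geometric-in-scale series and, via $\Q/p<1$, that the Besov index is admissible). A secondary technical nuisance is verifying that the series expansion of $K(x,y)$ in the relative position $l$ converges with coefficients decaying fast enough to invoke Lemma \ref{compact-upper-bd-sequence} with some $r\ge 1$; here only the standard Calderón--Zygmund smoothness \eqref{not-change-sign}-type condition (ii) on $K$ is available (no extra differentiability is assumed in this proposition), so one should arrange the decomposition so that a single order of Hölder regularity $\sigma$ suffices, absorbing the mild loss into the geometric sum over scales. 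Throughout, one must also handle the non-convolution aspects (both $K(x,y)-K(x_1,y)$ and $K(y,x)-K(y,x_1)$ appear) to control both $[M_b,T]$ and its adjoint, as needed for a genuine Schatten-norm (not merely weak-Schatten) bound.
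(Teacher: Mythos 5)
Your approach is genuinely different from the paper's, and unfortunately it has a real gap in the ``local'' part that you flag but do not resolve.

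The paper's proof of Proposition~\ref{thm2} is much shorter and uses none of the NWO/Haar machinery: it treats $[M_b,T]$ directly as a kernel operator and invokes the $L^p(L^{p',\infty})$ criterion \eqref{spsym} of Russo/Janson--Wolff, which reduces the Schatten bound to a single Lorentz--H\"older estimate
\[
\int_{\GG}\Norm{(b(x)-b(\cdot))K(x,\cdot)}_{L^{p',\infty}}^{p}\,dx
\lesssim\int_{\GG}\Norm{\frac{b(x)-b(\cdot)}{\rho((\cdot)^{-1}x)^{2\Q/p}}}_{L^{p}}^{p}\Norm{\rho((\cdot)^{-1}x)^{\Q/p-\Q/p'}}_{L^{q,\infty}}^{p}\,dx
\lesssim\Norm{b}_{B_{p}^{\Q/p}}^{p},
\]
and then interpolates between two weak-Schatten estimates to upgrade $\L_{p,\infty}$ to $\L_{p}$. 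Only the size bound $|K(x,y)|\lesssim\rho(y^{-1}x)^{-\Q}$ is used; the smoothness condition (ii), the non-degeneracy, the dyadic structures and the NWO lemmas play no role. Crucially, this argument never separates the factor $b(x)-b(y)$ from the kernel: the near-diagonal singularity $\rho(y^{-1}x)^{-\Q}$ is tamed exactly because $b(x)-b(y)$ vanishes as $y\to x$ in the $L^p$-averaged sense encoded by $\Norm{b}_{B_p^{\Q/p}}$.

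Your plan destroys this cancellation. Replacing $b(x)-b(y)$ by $\psi_k(x)-\psi_k(y)$ with $\psi_k=\EE^t_{k+1}(b)-\EE^t_k(b)$ produces a step-function difference that does \emph{not} vanish as $y\to x$: across the boundary between two children $Q_1,Q_2$ of $R\in\D^t_k$ it jumps by an amount of size $\lambda_R=|R|^{-1/2}|\langle b,h^t_R\rangle|$ even when $\rho(y^{-1}x)$ is arbitrarily small. Hence the kernel of your local piece is, in modulus, comparable to $\lambda_R\,\rho(y^{-1}x)^{-\Q}$ on a neighbourhood of these shared boundaries — a genuine, uncancelled $\rho^{-\Q}$ singularity. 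The vanishing moment of $h^t_R$ only buys decay in the far regime $\rho(y^{-1}x)\gtrsim 2^{-k}$; it does nothing near the diagonal. Such a kernel is not of NWO type (Lemma \ref{NWO} requires $e_R\in L^q$ with $q>2$, a locally $L^q$ object, whereas $\rho^{-\Q}$ is not even locally $L^{p'}$), it fails the $L^p(L^{p',\infty})$ criterion (a direct computation of $\int_{Q_1}\Norm{\rho(\cdot^{-1}x)^{-\Q}}_{L^{p',\infty}(Q_2)}^{p}\,dx$ for adjacent children $Q_1,Q_2$ diverges), and it is not Hilbert--Schmidt. There is no Schur- or size-estimate repair: the $L^2$-boundedness of each piece $[M_{\psi_k},T]$ depends on the \emph{cancellation} in the kernel of $T$, which is invisible once you pass to the crude bound $|K|\lesssim\rho^{-\Q}$. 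Your claim that ``finite-dimensionality on each cube'' and $p>\Q$ salvage this step is unsubstantiated — $p>\Q$ controls the Besov index and the Lorentz exponent but does not remove the $\rho^{-\Q}$ singularity from a single scale.

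If you want to push a NWO-based proof through, the correct device is the one the paper employs in \Cref{endponit-estimates-sect-su}: decompose the \emph{kernel} on a dyadic Whitney family of $\GG\times\GG\setminus\{x=y\}$, so every block $P_1\times P_2$ sits a positive distance (comparable to side length) from the diagonal and $K$ is \emph{bounded} on each block, and only then expand $K$ locally by Alpert bases. Under that decomposition there is no near-diagonal piece at all. Alternatively, keep the factor $b(x)-b(y)$ intact and prove the $\L_{p,\infty}$ estimate directly as the paper does. Finally, your far-part steps and the identification
$\sum_{k}2^{k\Q}\Norm{\EE^t_{k+1}(b)-\EE^t_k(b)}_p^p\lesssim\Norm{b}_{B_p^{\Q/p}}^p$
are fine (the latter can be proved directly from the difference definition by telescoping the geometric sum in $k$ against $\rho(y^{-1}x)^{-2\Q}$), but they cannot compensate for the failure of the local part.
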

\begin{proof}
For $q>0$ and $x\in\GG$, we have
\begin{align*}
\Bigg|\set{y\in\GG: \rho(y^{-1}x)^{-\frac{\Q}{q}}>\lambda}\Bigg|=
\Bigg|\set{y\in\GG: \rho(y^{-1}x)^{\frac{\Q}{q}}<\frac{1}{\lambda}}\Bigg|=|B(o,1)|\lambda^{-q}.
\end{align*}

For $p>\Q\geq 4$, denote $p'$ the conjugate number of $p$ and $q=  (\frac{1}{p'}-\frac{1}{p} )^{-1}$, then
\begin{align*}
\sup_{x\in\GG}\Norm{\rho((\cdot)^{-1}x)^{\frac{\Q}{p}-\frac{Q}{p'}}}_{L^{q,\infty}}
=\sup_{x\in\GG}\sup_{\lambda>0}\lambda^{q}\Big|\set{y\in\GG: \rho(y^{-1}x)^{-\frac{\Q}{q}}>\lambda}\Big|=|B(o,1)|.
\end{align*}

By H\"{o}lder's inequality,
\begin{align*}
\int_{\GG}\Norm{(b(x)-b(\cdot))K(x,\cdot)}^{p}_{L^{p',\infty}}dx
&\lesssim \int_{\GG}\Norm{\frac{(b(x)-b(\cdot))}{\rho((\cdot)^{-1}x)^{\Q}}}^{p}_{L^{p',\infty}}dx\\
&\lesssim\int_{\GG}\Norm{\frac{(b(x)-b(\cdot))}{\rho((\cdot)^{-1}x)^{\frac{2\Q}{p}}}}^{p}_{L^{p}} \Norm{\rho((\cdot)^{-1}x)^{\frac{\Q}{p}-\frac{\Q}{p'}}}^{p}_{L^{q,\infty}}dx\\
&\lesssim\Norm{b}_{B_{p}^{\frac{\Q}{p}}}.
\end{align*}
Thus, by the symmetry of $(b(x)-b(y))K(x,y)$ and \eqref{spsym}, we have 
\begin{align*}
\Norm{[M_{b},T]}_{\L_{p,\infty}}\lesssim\Norm{b}_{B_{p}^{\frac{\Q}{p}}},\quad\forall p\in(\Q,\infty).
\end{align*}
Lastly, for $p\in(\Q,\infty)$, choose $p_{1},p_{2}\in(\Q,\infty)$ such that $\frac{1}{p}=\frac{\theta}{p_{1}}+\frac{1-\theta}{p_{2}}$ with $\theta\in(0,1)$. 
Then interpolation theorem gives the desired result (see e.g. \cite{Simon1979}).
\end{proof}

Propositions \ref{thm1} and \ref{thm2} complete the proof of Theorem \ref{mt} \eqref{mta}.
Checking the proof of \Cref{thm1} and \Cref{thm2}, we obtain the following characterisation of Besov space $B_{p}^{\frac{\Q}{p}}$ with $\Q<p<\infty$. 
\begin{corollary}
Let $\Q<p<\infty$ and $f\in L_{loc}(\GG)$. Then $f\in B_{p}^{\frac{\Q}{p}}$ if and only if $\displaystyle\sum_{k\in\ZZ}2^{k\Q}\Norm{b-\EE^{t}_{k}(b)}_{p}^{p}<\infty$ for all $t=1,\ldots,\T$.
Moreover, we have $$\Norm{f}_{B_{p}^{\frac{\Q}{p}}}\simeq
\sum_{t=1}^{\T}\Big(\sum_{k\in\ZZ}2^{k\Q}\Norm{b-\EE^{t}_{k}(b)}_{p}^{p}\Big)^{\frac{1}{p}}.$$
\end{corollary}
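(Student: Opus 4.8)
The plan is to establish the two–sided estimate $\|f\|_{B_p^{\Q/p}} \simeq \sum_{t=1}^{\T}\big(\sum_{k\in\ZZ}2^{k\Q}\|f-\EE_k^t(f)\|_p^p\big)^{1/p}$ by isolating, from the proofs of \Cref{thm1} and \Cref{thm2}, precisely those estimates that do not mention $[M_b,T]$ or the hypothesis $b\in{\rm VMO}(\GG)$; these entered only through \Cref{final} and through the boundedness input \eqref{spsym}, respectively, while everything else uses only $f\in L^1_{loc}(\GG)$ and the geometry of the (adjacent) dyadic cube systems. Both proofs pass through a two–variable form of the Besov seminorm, and from them one reads off the equivalence
\begin{equation*}
\|f\|_{B_p^{\Q/p}}^p \;\simeq\; \int_\GG\int_\GG \frac{|f(x)-f(y)|^p}{\rho(y\rp x)^{2\Q}}\,dy\,dx,
\end{equation*}
the ``$\gtrsim$'' being \eqref{besov-upper-bd} together with summing the shells $\{2^{-k-k_0-4}\le\rho(y\rp x)\le 2^{-k-k_0-3}\}$ against $2^{2k\Q}\simeq\rho(y\rp x)^{-2\Q}$, and the ``$\lesssim$'' being the estimate $\int_\GG\int_\GG|f(x)-f(y)|^p\rho(y\rp x)^{-2\Q}\,dy\,dx\lesssim\|f\|_{B_p^{\Q/p}}^p$ contained in the proof of \Cref{thm2}.

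For the inequality $\|f\|_{B_p^{\Q/p}}\lesssim\sum_{t}\big(\sum_k 2^{k\Q}\|f-\EE_k^t(f)\|_p^p\big)^{1/p}$ I would reproduce the concluding chain of the proof of \Cref{thm1}, stopping one step short: combining \eqref{besov-upper-bd}, the adjacent dyadic cube property (each $R\in\D_k^t$ contains at most $2^{(k_0+3)\Q}$ cubes of $\D^1_{k+k_0+3}$), the triangle inequality $|f(x)-f(y)|\lesssim|f(x)-f_R|+|f_R-f(y)|$, and $|R|\simeq 2^{-k\Q}$ gives
\begin{equation*}
\|f\|_{B_p^{\Q/p}}\lesssim\Big(\sum_k 2^{2k\Q}J_{k+k_0+3}\Big)^{1/p}\lesssim\sum_{t=1}^{\T}\Big(\sum_k 2^{2k\Q}\sum_{R\in\D_k^t}\int_R\int_R|f(x)-f_R|^p\,dy\,dx\Big)^{1/p}\lesssim\sum_{t=1}^{\T}\Big(\sum_k 2^{k\Q}\|f-\EE_k^t(f)\|_p^p\Big)^{1/p},
\end{equation*}
where $f_R=\fint_R f$ and the subsequent step (which is \Cref{final}) is simply discarded. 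For the reverse inequality I would argue cube by cube: Jensen's inequality gives $|f(x)-f_R|^p\le\fint_R|f(x)-f(y)|^p\,dy$, hence $\int_R|f(x)-f_R|^p\,dx\le|R|^{-1}\int_R\int_R|f(x)-f(y)|^p\,dy\,dx$; since $\D_k^t$ tiles $\GG$, the inclusion $R\subset B(c_R,\gamma_2 2^{-k})$ forces $\rho(y\rp x)\le 2A_0\gamma_2 2^{-k}$ for $x,y\in R$, and $|R|\gtrsim 2^{-k\Q}$, so
\begin{equation*}
\|f-\EE_k^t(f)\|_p^p=\sum_{R\in\D_k^t}\int_R|f(x)-f_R|^p\,dx\lesssim 2^{k\Q}\int_\GG\int_{\rho(y\rp x)\le 2A_0\gamma_2 2^{-k}}|f(x)-f(y)|^p\,dy\,dx.
\end{equation*}
Multiplying by $2^{k\Q}$, summing over $k\in\ZZ$, and using the geometric–series bound $\sum_{k\,:\,2A_0\gamma_2 2^{-k}\ge\rho(y\rp x)}2^{2k\Q}\lesssim\rho(y\rp x)^{-2\Q}$ yields $\sum_k 2^{k\Q}\|f-\EE_k^t(f)\|_p^p\lesssim\int_\GG\int_\GG|f(x)-f(y)|^p\rho(y\rp x)^{-2\Q}\,dy\,dx\lesssim\|f\|_{B_p^{\Q/p}}^p$; summing over the finitely many $t$ completes the proof.

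I expect no essential obstacle here — the argument is pure bookkeeping. The only point that needs attention is to verify that the fragments of the proofs of \Cref{thm1} and \Cref{thm2} being quoted are genuinely independent of the commutator and of the VMO assumption (they are, since these enter only via \Cref{final} and \eqref{spsym}), and to track the exact powers of $2^{k}$ and of $|R|$ through the tiling and geometric–series steps, so that the implied constants depend only on $p$, $\Q$, and the cube–system parameters.
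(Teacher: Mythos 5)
Your argument is correct, and your forward inequality $\|f\|_{B_p^{\Q/p}}\lesssim\sum_t\bigl(\sum_k2^{k\Q}\|f-\EE_k^t(f)\|_p^p\bigr)^{1/p}$ is, as you say, exactly the chain in the proof of Proposition \ref{thm1} stopped one step before invoking Lemma \ref{final}. Where you genuinely depart from what ``checking the proof of Proposition \ref{thm2}'' most naturally suggests is the reverse inequality: the paper's implicit route is to chain Lemma \ref{final} with Proposition \ref{thm2}, i.e.\ $\bigl(\sum_k2^{k\Q}\|b-\EE_k^t(b)\|_p^p\bigr)^{1/p}\lesssim\|[M_b,T]\|_{\L_p}\lesssim\|b\|_{B_p^{\Q/p}}$, which requires the existence of a non-degenerate Calder\'on--Zygmund operator (supplied by the Riesz transforms) and the embedding $B_p^{\Q/p}\subset{\rm VMO}$ so that Lemma \ref{final} applies at all. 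Your cube-by-cube Jensen argument bypasses the operator entirely: $|f(x)-f_R|^p\le\fint_R|f(x)-f(y)|^p\,dy$, tiling by $\D_k^t$, the containment $R\subset B(c_R,\gamma_2 2^{-k})$ with symmetry of $\rho$ giving $\rho(y\rp x)\le 2A_0\gamma_2 2^{-k}$, $|R|\gtrsim 2^{-k\Q}$, and a geometric series in $k$. This is more elementary and self-contained, makes explicit the pivot $\|f\|_{B_p^{\Q/p}}^p\simeq\int_\GG\int_\GG|f(x)-f(y)|^p\rho(y\rp x)^{-2\Q}\,dy\,dx$ that the paper uses only implicitly, and in particular does not need to first deduce $b\in{\rm VMO}$. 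The commutator route buys nothing here beyond reusing established lemmas, so your direct argument is the cleaner choice; all the bookkeeping you flag (the tiling, the geometric series, the powers of $2^k$ and $|R|$) indeed works out with constants depending only on $p$, $\Q$, $A_0$, $\gamma_1$, $\gamma_2$, $C_{adj}$.
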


\section{Proof of \Cref{mt} \eqref{mtb}}\label{mt-mtb}

In this section, we provide the proof for the case that commutator $[M_{b},T]$ vanishes. This proof is based on the special structure of the multiplication mapping on $\GG$ that ensures a lower estimate.
According to stratification $\RR^{d}=\RR^{n_{1}}\times\RR^{n_{2}}\times\cdots\times\RR^{n_{\tau}}$,  it is natural to write $y\rp x=((y\rp x)^{(1)},(y\rp x)^{(2)},\ldots,(y\rp x)^{(\tau)})$ for $x,y\in\GG$. 
More precisely, the components of $(y^{-1}x)^{(1)}$ can be written as follows
\begin{align}\label{inverse-firstlayer}
(y^{-1}x)^{(1)}_{l}=x^{(1)}_{l}-y^{(1)}_{l},~l=1,2,\ldots,n_{1},
\end{align}
where $x^{(1)}_{l}$ is the $l$-th component in the first layer of $x$ and $y^{(1)}_{l}$ is the $l$-th component in the first layer of $y$.

Let $\set{X_{1}^{(j)},\ldots,X_{n_{j}}^{(j)}}$ be a linear basis of $\g_{j}$ for $j=1,2,\ldots,\tau$, and denote $\set{X_{1}^{(1)},\ldots,X_{n_{1}}^{(1)}}=\set{X_{1},\ldots,X_{n_{1}}}$ for simplicity.
The first order Mac Laurin formula \cite[Prop~20.3.11]{BLU2007} or Taylor expansion \cite[Prop~20.3.14]{BLU2007} on $\GG$ with integral remainder can be described as follows. If $u\in C^{2}(\GG)$, then 
\begin{align}\label{Taylor-formula}
u(x)=u(y)+\sum_{j=1}^{n_{1}}X_{j}u(y)(y^{-1}x)^{(1)}_{j}+\Omega(u,y\rp x),
\end{align}
where
\begin{align*}
\Omega(u,y\rp x)&=\sum_{j=2}^{\tau}\sum_{k=1}^{n_{j}}X_{k}^{(j)}u(y)(y\rp x)_{k}^{(j)}\\
&+\sum_{j_{1},j_{2}\in\{1,\ldots,\tau\}}\sum_{\substack{k_{1}\in\{1,\ldots,n_{j_{1}}\}\\k_{2}\in\{1,\ldots,n_{j_{2}}\}}}
\frac{(y\rp x)_{k_{1}}^{(j_{1})}(y\rp x)_{k_{2}}^{(j_{2})}}{2}\int_{0}^{1}X_{k_{1}}^{(j_{1})}X_{k_{2}}^{(j_{2})}u\Big(y \Exp(\sum_{j=1}^{\tau}\sum_{k=1}^{n_{j}}s(y\rp x)_{k}^{(j)}X_{k}^{(j)})\Big)(1-s)ds.
\end{align*}
Letting $j\in\{2,\ldots,\tau\}$ and $k\in\{1,\ldots,n_{j}\}$, we have $|(y\rp x)_{k}^{(j)}|\leq\rho(y\rp x)^{j}$ and $X_{k}^{(j)}=[X_{i_{1}},[\cdots,[X_{i_{j-1}},X_{i_{j}}]]]$ for some $i_{1},\ldots,i_{j}\in\set{1,\ldots,n_{1}}$.
Thus, for $y^{-1}x$ near $o$, we have
\begin{align}\label{Remainder}
|\Omega(u,y\rp x)|\leq C_{\GG} \; \rho(y\rp x)^{2}\sup_{\substack{2\leq m\leq2\tau\\k_{1},\ldots,k_{m}\in\{1,\ldots,n_{1}\}}}\Norm{X_{k_{1}}\cdots X_{k_{m}}u}_{\infty}
\end{align}
for some positive constant $C_{\GG}$ depending only on $\GG$.

%



\begin{lemma}\label{firstlayer}
Let $k\in\ZZ$, and $C_{\rho}, A_{0}$ be given in \eqref{norm-inequality} and \eqref{triangle-inequality} respectively. There are an integer $N_{0}>\log_{2}(2A_{0}C_{\rho})+1$ and a constant $C_{1}>0$ such that, for any $R\in\D_{k}$ and $s\in\{-1,1\}^{n_{1}}$, we can find $R_{1},R_{2}\in\D_{k+N_{0}}$ satisfying that $R_{1},R_{2}\subset R$ and
\begin{align*}
s_{j}(x^{(1)}_{j}-y^{(1)}_{j})\geq C_{1}2^{-k},\quad j=1,2,\ldots,n_{1},  \quad \forall \;x\in R_{1}, \forall \;y\in R_{2},
\end{align*}
where $s_{j}$ is the $j$-th component of $s$.
\end{lemma}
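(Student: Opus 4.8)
The plan is to exploit two elementary structural facts. First, the cube $R\in\D_k$ contains the ball $B(c_R,\gamma_1 2^{-k})$, so we may freely place points inside $R$ with prescribed first-layer coordinates. Second, the first-layer coordinates behave \emph{additively} under the group law (by \eqref{inverse-firstlayer}, $(y\rp x)^{(1)}_l=x^{(1)}_l-y^{(1)}_l$), while $|w^{(1)}_l|\le\rho_\infty(w)\le\rho(w)$ for every $w\in\GG$ by \eqref{norm-inequality}; this lets us both create and then control differences of first-layer coordinates purely in terms of the homogeneous norm.

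First I would produce two ``witness points.'' Fix $s\in\{-1,1\}^{n_1}$, set $r=\gamma_1 2^{-k}/(2\sqrt{n_1})$, and let $v_1$ (resp. $v_2$) be the element of $\GG$ whose first layer is $(rs_1,\dots,rs_{n_1})$ (resp. $(-rs_1,\dots,-rs_{n_1})$) and whose higher layers vanish. Then $\rho(v_i)=r\sqrt{n_1}=\tfrac12\gamma_1 2^{-k}<\gamma_1 2^{-k}$, so the points $p_i:=c_R\circ v_i$ satisfy $\rho(c_R\rp p_i)=\rho(v_i)<\gamma_1 2^{-k}$ (using symmetry of $\rho$), hence $p_i\in B(c_R,\gamma_1 2^{-k})\subset R$. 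Since first layers add, $(p_1)^{(1)}_j-(p_2)^{(1)}_j=2rs_j$, so $s_j\big((p_1)^{(1)}_j-(p_2)^{(1)}_j\big)=2r=\gamma_1 2^{-k}/\sqrt{n_1}$ for every $j$. Now let $R_i\in\D_{k+N_0}$ be the unique dyadic cube of generation $k+N_0$ containing $p_i$ (uniqueness by property (a)); since $p_i\in R_i\cap R$ and cubes of different generations are nested (property (b)), we get $R_i\subset R$, as required. For $x\in R_1$ we have $x,p_1\in R_1\subset B(c_{R_1},\gamma_2 2^{-k-N_0})$, whence $\rho(p_1\rp x)\le A_0\big(\rho(c_{R_1}\rp p_1)+\rho(c_{R_1}\rp x)\big)<2A_0\gamma_2 2^{-k-N_0}$, and therefore $|x^{(1)}_j-(p_1)^{(1)}_j|=|(p_1\rp x)^{(1)}_j|\le\rho(p_1\rp x)<2A_0\gamma_2 2^{-k-N_0}$; the same bound holds for $|y^{(1)}_j-(p_2)^{(1)}_j|$ with $y\in R_2$. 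Writing $s_j(x^{(1)}_j-y^{(1)}_j)=s_j\big((p_1)^{(1)}_j-(p_2)^{(1)}_j\big)+s_j\big(x^{(1)}_j-(p_1)^{(1)}_j\big)-s_j\big(y^{(1)}_j-(p_2)^{(1)}_j\big)$ and applying the triangle inequality gives $s_j(x^{(1)}_j-y^{(1)}_j)\ge \gamma_1 2^{-k}/\sqrt{n_1}-4A_0\gamma_2 2^{-k-N_0}$ for all $j$ and all $x\in R_1$, $y\in R_2$. It remains to fix $N_0$: choose any integer $N_0$ with $N_0>\log_2(2A_0C_\rho)+1$ that is moreover large enough that $4A_0\gamma_2 2^{-N_0}\le \tfrac12\gamma_1/\sqrt{n_1}$; then the right-hand side is $\ge C_1 2^{-k}$ with $C_1:=\gamma_1/(2\sqrt{n_1})$, which is exactly the claim.

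I do not expect a genuine obstacle here beyond bookkeeping; the one point that needs care is that $R_1,R_2$ must be honest members of $\D_{k+N_0}$ rather than arbitrary small subcubes, which is precisely why the argument invokes the partition-and-nesting structure of the dyadic system (properties (a), (b)) instead of simply taking small balls around $p_1$ and $p_2$. Correspondingly, the separation between first-layer coordinates only survives the passage to these subcubes because $N_0$ is taken large relative to $A_0$, $\gamma_1$, $\gamma_2$ and $n_1$, which is where the lower bound on $N_0$ in the statement comes from.
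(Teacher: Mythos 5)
Your proof is correct and follows essentially the same strategy as the paper's: translate to the center of $R$, produce two witness points separated in every first-layer coordinate by a fixed multiple of $2^{-k}$ in the direction prescribed by $s$, and then pass to dyadic subcubes of generation $k+N_0$ small enough that the separation survives. The differences are streamlining rather than substance: you take $v_1,v_2$ with opposite first layers and zero higher layers (so $\rho(v_i)$ is computed directly), whereas the paper selects $x_R$ and then takes $y_R=x_R^{-1}$ and controls $\rho_\infty$ via the constant $C_\rho$; and you obtain $R_i$ as the unique cube of $\D_{k+N_0}$ containing $p_i$, then control the perturbation via property (d) of the cube system, whereas the paper inscribes the subcubes inside a small ball $B(x_R,\gamma_12^{-k-N})$ and needs the extra geometric work showing $o\notin B(x_R,\gamma_12^{-k-N})\subset B(o,\gamma_12^{-k})$, which is where its lower bound on $N$ is actually used. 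Your argument only invokes $N_0>\log_2(2A_0C_\rho)+1$ to meet the lemma's stated requirement, not because the construction needs it, which is a fair observation; your binding constraint comes from making $4A_0\gamma_2 2^{-N_0}$ small relative to $\gamma_1/\sqrt{n_1}$, matching the role played by $N_0$ in the paper. Everything checks out, including the use of first-layer additivity \eqref{inverse-firstlayer}, the bound $|w^{(1)}_j|\le\rho_\infty(w)\le\rho(w)$, and the nesting of dyadic cubes to get $R_i\subset R$.
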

\begin{proof}
Denote $v(w)$ the direction of non-zero vector $w\in\RR^{n_{1}}$.
For cube $R\in\D_{k}$, by the properties of dyadic cube system, we find positive constants $\gamma_1 $ and $\gamma_2$ such that $B(c_{R},\gamma_{1}2^{-k})\subset R \subset B(c_{R},\gamma_{2}2^{-k})$. Write $R=c_{R}\cdot R_{o}$ with $o$ the center of $R_{o}$. By translation, we have
$$B(o,\gamma_{1}2^{-k})\subset R_{o}\subset B(o,\gamma_{2}2^{-k}).$$

For $s\in\{-1,1\}^{n_{1}}$, select $x_{R}\in B(o,\gamma_{1}2^{-k})$ satisfying
\begin{align*}
v(x_{R}^{(1)})=v(s),\quad \rho_{\infty}(x_{R})=\frac{1}{2A_{0}C_{\rho}}\gamma_{1}2^{-k}
\quad{\rm and}\quad |(x_{R}^{(1)})_{j}|=\frac{1}{2A_{0}C_{\rho}}\gamma_{1}2^{-k},j=1,2,\ldots,n_{1}.
\end{align*}
If $\rho(z)=\gamma_{1}2^{-k}$, by the symmetry of $\rho$, we have
\begin{align*}
\rho(z\rp x_{R})=\rho(x_{R}\rp z)\geq \frac{1}{A_{0}}\rho(z)-\rho(x_{R})\geq\frac{1}{A_{0}}\rho(z)-C_{\rho}\rho_{\infty}(x_{R})=\frac{1}{2A_{0}}\gamma_{1}2^{-k}.
\end{align*}
This implies that the distance between $x_{R}$ and the sphere of the ball $B(o,\gamma_{1}2^{-k})$ is larger than $\frac{1}{2A_{0}}\gamma_{1}2^{-k}$.
Combining the fact that $\rho(x_R)\geq \rho_{\infty}(x_{R})=\frac{1}{2A_{0}C_{\rho}}\gamma_{1}2^{-k}$, we may choose large enough $N>\log_{2}(2A_{0}C_{\rho})+1$ such that
\begin{align*}
o\notin B(x_{R},\gamma_{1}2^{-k-N})\subset B(o,\gamma_{1}2^{-k}).
\end{align*}
By the definition of dilation, the size in the first layer of $\GG$ does not change. So, for $x\in B(x_{R},\gamma_{1}2^{-k-N})$, we have
\begin{align*}
\frac{1}{4A_{0}C_{\rho}}\gamma_{1}2^{-k}\leq
(\frac{1}{2A_{0}C_{\rho}}-2^{-N})\gamma_{1}2^{-k}\leq|x^{(1)}_{j}|
\leq(\frac{1}{2A_{0}C_{\rho}}+2^{-N})\gamma_{1}2^{-k}
\leq\frac{1}{A_{0}C_{\rho}}\gamma_{1}2^{-k}
\end{align*}
for any $j=1,2,\ldots,n_{1}$.

By the symmetry of $\rho$,
\begin{align*}
B(x_{R},\gamma_{1}2^{-k-N})\rp=B(y_{R},\gamma_{1}2^{-k-N})
\end{align*}
with $y_{R}=x_{R}\rp$. By formula \eqref{inverse-firstlayer}, $v(y_{R}^{(1)})=-v(s)$, and
\begin{align*}
\frac{1}{4A_{0}C_{\rho}}\gamma_{1}2^{-k}\leq |y^{(1)}_{j}| 
\leq\frac{1}{A_{0}C_{\rho}}\gamma_{1}2^{-k},\quad j=1,2,\ldots,n_{1} \quad \forall \; y\in B(y_{R},\gamma_{1}2^{-k-N}).
\end{align*}

In a sum, for $N$ chosen as above, there is a constant $C_{1}>0$ satisfying that
\begin{align*}
s_{j}(x^{(1)}_{j}-y^{(1)}_{j})\geq C_{1}2^{-k},\quad j=1,2,\ldots,n_{1}  
\end{align*}
for any pair of $x\in B(x_{R},\gamma_{1}2^{-k-N})$ and $  y\in B(y_{R},\gamma_{1}2^{-k-N})$.
Moreover, by formula \eqref{inverse-firstlayer}, the same estimate holds true for any pair of $x\in c_{R}\cdot B(x_{R},\gamma_{1}2^{-k-N})$ and $  y\in c_{R}\cdot B(y_{R},\gamma_{1}2^{-k-N})$.
Lastly, pick a large integer $N_{0}\geq N>\log_{2}(2A_{0}C_{\rho})+1$ such that we can find cubes $R_{1},R_{2}\in\D_{k+N_{0}}$ with 
\begin{align*}
R_{1}\subset c_{R}\cdot B(x_{R},\gamma_{1}2^{-k-N})\subset R\quad \mbox{and}\quad 
R_{2}\subset c_{R}\cdot B(y_{R},\gamma_{1}2^{-k-N})\subset R.
\end{align*}
This is the expected result.
\end{proof}

Now fix $N_0$ as in the above lemma.
For $R\in\D_{k}$, denote
\begin{align*}
\E(b,R)=\fint_{R}\fint_{R}|\EE_{k+N_{0}}(b)(x)-\EE_{k+N_{0}}(b)(y)|dxdy.
\end{align*}

\begin{lemma}\label{constantcontra}
Assume $b\in C_{c}^{\infty}(\GG)$ and $\set{\E(b,R)}_{R\in\D}\in\ell^{\Q}$. Then $b$ is a constant.
\end{lemma}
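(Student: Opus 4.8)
The goal is to show that the hypothesis forces the horizontal gradient $\nabla b=(X_1b,\dots,X_{n_1}b)$ to vanish identically; since $\GG$ is connected and $\g_1$ generates $\g$, this gives that $b$ is constant (if $X_jb\equiv 0$ for all $j$ then $[X_i,X_j]b=X_i(X_jb)-X_j(X_ib)=0$, and inductively every iterated bracket annihilates $b$, so $db\equiv 0$). The starting point is to linearise $b$ by the stratified Taylor formula \eqref{Taylor-formula}: using $(y^{-1}x)^{(1)}_j=x^{(1)}_j-y^{(1)}_j$ from \eqref{inverse-firstlayer} and peeling off $X_jb(c_R)$, for $R\in\D_k$ and $x,y\in R$ one writes
\[
b(x)-b(y)=\sum_{j=1}^{n_1}X_jb(c_R)\,(x^{(1)}_j-y^{(1)}_j)+\mathrm{Err}_R(x,y),
\]
where $\mathrm{Err}_R(x,y)=\sum_j\big(X_jb(y)-X_jb(c_R)\big)(x^{(1)}_j-y^{(1)}_j)+\Omega(b,y^{-1}x)$. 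Since $b\in C_c^\infty(\GG)$, the smoothness of $X_jb$ gives $|X_jb(y)-X_jb(c_R)|\lesssim_b\rho(c_R^{-1}y)\lesssim 2^{-k}$, while $|x^{(1)}_j-y^{(1)}_j|\le\rho(y^{-1}x)\lesssim 2^{-k}$ and \eqref{Remainder} gives $|\Omega(b,y^{-1}x)|\lesssim_b\rho(y^{-1}x)^2\lesssim 2^{-2k}$; hence $|\mathrm{Err}_R(x,y)|\le C_b\,2^{-2k}$, with $C_b$ depending only on finitely many $C^m$-norms of $b$.

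For the key lower bound, fix $R\in\D_k$ with $k$ large and choose the sign vector $s\in\{-1,1\}^{n_1}$ by $s_j=\sgn\big(X_jb(c_R)\big)$. Applying Lemma \ref{firstlayer} produces $R_1,R_2\in\D_{k+N_0}$ with $R_1,R_2\subset R$ and $s_j(x^{(1)}_j-y^{(1)}_j)\ge C_1 2^{-k}$ for all $x\in R_1$, $y\in R_2$ and all $j$. With this choice every summand is nonnegative, $X_jb(c_R)(x^{(1)}_j-y^{(1)}_j)=|X_jb(c_R)|\,s_j(x^{(1)}_j-y^{(1)}_j)\ge C_1 2^{-k}|X_jb(c_R)|$, so
\[
b(x)-b(y)\ \ge\ C_1 2^{-k}\sum_{j=1}^{n_1}|X_jb(c_R)|-C_b 2^{-2k},\qquad x\in R_1,\ y\in R_2.
\]
Because $\EE_{k+N_0}(b)\equiv b_{R_i}$ on $R_i$, averaging in $x\in R_1$ and $y\in R_2$ yields the same lower bound for $b_{R_1}-b_{R_2}$, and then, using $B(c_R,\gamma_1 2^{-k})\subset R\subset B(c_R,\gamma_2 2^{-k})$ (whence $|R_1|,|R_2|\gtrsim 2^{-(k+N_0)\Q}$ and $|R|\lesssim 2^{-k\Q}$),
\[
\E(b,R)\ \ge\ \frac{|R_1|\,|R_2|}{|R|^2}\,\big|b_{R_1}-b_{R_2}\big|\ \gtrsim\ 2^{-k}\sum_{j=1}^{n_1}|X_jb(c_R)|-C_b 2^{-2k},
\]
the implied constant depending only on the cube system and $N_0$.

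To conclude, suppose $b$ is not constant. By the first paragraph there are $p\in\GG$ and $\varepsilon,r_0>0$ with $\sum_j|X_jb|\ge\varepsilon$ on $B(p,r_0)$. Choose $k$ so large that $2^{-k}<r_0$ and $C_b 2^{-2k}\le\tfrac12 C_1\varepsilon 2^{-k}$. Then every $R\in\D_k$ with $R\subset B(p,r_0/2)$ has $c_R\in B(p,r_0)$, hence $\E(b,R)\gtrsim\varepsilon 2^{-k}$; moreover a routine volume count (using disjointness of $\D_k$ and $|R|\simeq 2^{-k\Q}$) shows there are $\gtrsim 2^{k\Q}$ such cubes. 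Consequently
\[
\sum_{R\in\D_k}\E(b,R)^{\Q}\ \gtrsim\ \big(\varepsilon 2^{-k}\big)^{\Q}\cdot 2^{k\Q}\ \gtrsim_\varepsilon\ 1
\]
uniformly over all large $k$, so $\sum_{R\in\D}\E(b,R)^{\Q}=\infty$, contradicting $\{\E(b,R)\}_{R\in\D}\in\ell^{\Q}$. Therefore $b$ is constant. The delicate point is the uniform control of $\mathrm{Err}_R$: one must check that the quadratic remainder \eqref{Remainder} and the oscillation of $\nabla b$ over a cube of scale $2^{-k}$ are genuinely $O(2^{-2k})$ with a constant independent of $R\in\D_k$, so that the linear term $2^{-k}\|\nabla b(c_R)\|$ dominates at fine scales — this is exactly where the compact support and smoothness of $b$ together with the precise stratified Taylor expansion enter; the second ingredient, supplied by Lemma \ref{firstlayer} via the group law \eqref{inverse-firstlayer}, is the ability to place inside each dyadic cube a pair of well-separated subcubes on which all horizontal coordinate differences have prescribed signs.
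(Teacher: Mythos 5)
Your proof is correct and follows essentially the same route as the paper's: Taylor-expand $b$ to isolate the linear term $\sum_j X_jb(c_R)(x^{(1)}_j-y^{(1)}_j)$ with an $O(2^{-2k})$ error uniform in $R$, invoke Lemma \ref{firstlayer} with $s_j=\sgn(X_jb(c_R))$ to find sub-cubes $R_1,R_2\subset R$ on which the linear term is bounded below by $\gtrsim 2^{-k}|\nabla b(c_R)|$, deduce $\E(b,R)\gtrsim 2^{-k}$ near a point of non-vanishing gradient, and then count $\gtrsim 2^{k\Q}$ such cubes per scale to blow up the $\ell^\Q$ norm. The only cosmetic difference is that you expand $b(x)$ around $y$ and absorb the drift $X_jb(y)-X_jb(c_R)$ into the error, whereas the paper expands $b(x)-b(c_R)$ and $b(y)-b(c_R)$ separately around $c_R$; both yield the same $O(2^{-2k})$ control.
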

\begin{proof}
Recall $\nabla=(X_{1},X_{2},\ldots,X_{n_{1}})$.
If $b\in C_{c}^{\infty}(\GG)$ is not a constant, then there is at least one point $x_{0}\in\GG$ such that $|\nabla(b)(x_{0})|>0$.
For $x,y\in\GG$, and $R \in \D_k $ with center $c_{R}$, by formula \eqref{inverse-firstlayer}, we have
\begin{align*}
(c_{R}^{-1}x)^{(1)}-(c_{R}^{-1}y)^{(1)}=x^{(1)}-y^{(1)}=(y^{-1}x)^{(1)}.
\end{align*}
By the Taylor expansion formula \eqref{Taylor-formula},
\begin{align*}
b(x)-b(y)&=(b(x)-b(c_{R}))-(b(y)-b(c_{R}))\\
&=\sum_{j=1}^{n_{1}}X_{j}b(c_{R})(y^{-1}x)^{(1)}_{j}+\Omega(b,c_{R}\rp x)-\Omega(b,c_{R}\rp y).
\end{align*}
When $x\in R$, it follows from \eqref{Remainder} that 
\begin{align*}
 |\Omega(b,c_{R}\rp x)| \leq C_{b}2^{-2k}
\end{align*}
for some positive constant $C_{b}$ independent of $k$. In order to estimate $X_{j}b(c_{R})(y^{-1}x)^{(1)}_{j}$, denote $s^{R}_{j}=\sgn(X_{j}b(c_{R}))$, for $j=1,2,\ldots,n_{1}$. For $s^R:= (s^{R}_{1},\ldots, s^{R}_{n_1})$, by Lemma \ref{firstlayer}, we find $R_{1},R_{2} \subset R$ such that, for $x\in R_{1},y\in R_{2}$, we have
\begin{align*}
b(x)-b(y)&\geq\sum_{j=1}^{n_{1}}|X_{j}b(c_{R})|s^{R}_{j}(y^{-1}x)^{(1)}_{j}-2C_{b}2^{-2k}\\
&\geq C_{1}2^{-k}|\nabla b(c_{R})|-2C_{b}2^{-2k}.
\end{align*}

For $b\in C_{c}^{\infty}(\GG)$, there is an $\epsilon_{0}>0$ satisfying that, when $\rho(y\rp x_{0})\leq\epsilon_{0}$, then
$|\nabla b(y)|\geq\frac{1}{2}|\nabla b(x_{0})|$.
Let $N_{\epsilon_{0}}$ be the least integer such that there exists cube $R\in\D_{N_{\epsilon_{0}}}$ satisfying that $\rho(c_R\rp x_{0})\leq\epsilon_{0}$.

Continue the estimate of $b(x)-b(y)$.
Take $N'_{0}>\max(\log_{2}(\frac{8C_{b}}{C_{1}|\nabla b(x_{0})|}),N_{\epsilon_{0}})$, and let $k\geq N'_{0}$. Then for $R\in\D_{k}$ with $\rho(c_{R}\rp x_{0})\leq\epsilon_{0}$, we have
\begin{align*}
b(x)-b(y)\geq\frac{1}{2}C_{1}2^{-k}|\nabla b(x_{0})|-2C_{b}2^{-2k}
\geq\frac{1}{4}C_{1}2^{-k}|\nabla b(x_{0})|\quad x\in R_{1},y\in R_{2}.
\end{align*}
It then follows that
\begin{align*}
|b_{R_{1}}-b_{R_{2}}|=\bigg|\fint_{R_{1}}\fint_{R_{2}}(b(x)-b(y))dxdy\bigg|
\geq\frac{1}{4}C_{1}2^{-k}|\nabla b(x_{0})|.
\end{align*}
When $R_{1},R_{2}\in\D_{k+N_{0}}$ are the descendants of $R$, by definition,
$$\E(b,R)\geq 2^{-2N_0}\fint_{R_1}\fint_{R_2}|\EE_{k+N_{0}}(b)(x)-\EE_{k+N_{0}}(b)(y)|dxdy= 2^{-2N_0}|b_{R_{1}}-b_{R_{2}}|.$$
Thus,
\begin{align*}
	\Norm{\set{\E(b,R)}_{R\in\D}}_{\ell^{\Q}}^{\Q}
	&\geq\sum_{k>N'_{0}}\sum_{\substack{ R\in\D_{k} \\ \rho(c_{R}\rp x_{0})\leq\epsilon_{0}}}  2^{-2N_0 Q}|b_{R_{1}}-b_{R_{2}}|^{\Q}\\
	&\geq\bigg(\frac{1}{4}C_{1}|\nabla b(x_{0})|\bigg)^{\Q}\sum_{k>N'_{0}}2^{-k\Q}\sum_{R\in\D_{k},\rho(c_{R}\rp x_{0})\leq\epsilon_{0}}\unit.
\end{align*}
Note that, for large enough $k>N'_{0}$, there are at least $2^{(k-N_{\epsilon_{0}})\Q}$ cubes $R\in\D_{k}$ such that $\rho(c_{R}\rp x_{0})\leq\epsilon_{0}$. So the last sum above is divergent. This is a contradiction. We complete the proof.
\end{proof}

\begin{proposition}\label{constantthm}
Assume that $T$ is a Calder\'{o}n--Zygmund singular integral operator satisfying \eqref{not-change-sign} and \eqref{nondegenerate}, $b\in {\rm VMO}(\GG)$ and $[M_{b},T]\in\L_{\Q}$. Then $b$ is a constant.
\end{proposition}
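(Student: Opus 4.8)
The plan is to deduce from $[M_{b},T]\in\L_{\Q}$ that the mean-oscillation sequence $\set{\E(b,R)}_{R\in\D}$ lies in $\ell^{\Q}$, and then to conclude via Lemma \ref{constantcontra} after a mollification reducing matters to a smooth symbol. First I would note that at the critical index we still have $p=\Q\in(1,\infty)$ (recall $\Q\geq4$), so the argument of Lemma \ref{core} goes through verbatim with the single dyadic system $\D$ of Lemma \ref{firstlayer} in place of $\D^{t}$ --- Lemma \ref{cube-nondegenerate} and Lemma \ref{twochi} being available for any one dyadic system --- and yields
\[
\sum_{k\in\ZZ}2^{k\Q}\Norm{\EE_{k+1}(b)-\EE_{k}(b)}_{\Q}^{\Q}\lesssim\Norm{[M_{b},T]}_{\L_{\Q}}^{\Q}<\infty.
\]

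The second step is a purely combinatorial passage from this quantity to $\set{\E(b,R)}_{R\in\D}$. Fix $R\in\D_{k}$. Since both $\fint_{R}\EE_{k+N_{0}}(b)$ and $\EE_{k}(b)\big|_{R}$ equal $b_{R}$, we have $\E(b,R)\leq2\fint_{R}|\EE_{k+N_{0}}(b)-\EE_{k}(b)|$; telescoping $\EE_{k+N_{0}}(b)-\EE_{k}(b)=\sum_{j=k}^{k+N_{0}-1}(\EE_{j+1}(b)-\EE_{j}(b))$ and applying Jensen's inequality together with the power-mean inequality (only the fixed number $N_{0}$ of terms appears) gives
\[
\E(b,R)^{\Q}\lesssim_{\GG}\sum_{j=k}^{k+N_{0}-1}\fint_{R}|\EE_{j+1}(b)-\EE_{j}(b)|^{\Q}.
\]
Now summing over $R\in\D_{k}$ (which partition $\GG$) turns $\sum_{R\in\D_{k}}\fint_{R}|\EE_{j+1}(b)-\EE_{j}(b)|^{\Q}$ into $2^{k\Q}\Norm{\EE_{j+1}(b)-\EE_{j}(b)}_{\Q}^{\Q}$, and then summing over $k\in\ZZ$ and interchanging the $k$- and $j$-summations --- for fixed $j$ only the $N_{0}$ indices $k=j-N_{0}+1,\dots,j$ occur, and $\sum_{k\leq j}2^{(k-j)\Q}\lesssim1$ --- I expect to reach
\[
\sum_{R\in\D}\E(b,R)^{\Q}\lesssim_{\GG}\sum_{j\in\ZZ}2^{j\Q}\Norm{\EE_{j+1}(b)-\EE_{j}(b)}_{\Q}^{\Q}<\infty,
\]
i.e.\ $\set{\E(b,R)}_{R\in\D}\in\ell^{\Q}$.

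Finally I would bridge the gap between Lemma \ref{constantcontra}, stated for $b\in C_{c}^{\infty}(\GG)$, and the present hypothesis $b\in{\rm VMO}(\GG)$ by a regularisation. Choose $\psi_{\epsilon}\in C_{c}^{\infty}(\GG)$ with $\psi_{\epsilon}\geq0$, $\int_{\GG}\psi_{\epsilon}=1$ and $\supp\psi_{\epsilon}\subset B(o,\epsilon)$, and set $b_{\epsilon}(y)=\int_{\GG}b(g\rp y)\psi_{\epsilon}(g)\,dg\in C^{\infty}(\GG)\cap{\rm VMO}(\GG)$. Since $T$ is a convolution operator it commutes with the left translations $\lambda_{g}$, so $M_{b_{\epsilon}}=\int_{\GG}\lambda_{g}M_{b}\lambda_{g\rp}\psi_{\epsilon}(g)\,dg$ and hence
\[
[M_{b_{\epsilon}},T]=\int_{\GG}\lambda_{g}[M_{b},T]\lambda_{g\rp}\psi_{\epsilon}(g)\,dg.
\]
As $\lambda_{g}$ is unitary on $L^{2}(\GG)$ and $\L_{\Q}$ is a Banach ideal (here $\Q\geq4>1$), Minkowski's integral inequality gives $\Norm{[M_{b_{\epsilon}},T]}_{\L_{\Q}}\leq\Norm{[M_{b},T]}_{\L_{\Q}}$; running the two steps above for $b_{\epsilon}$ then gives $\set{\E(b_{\epsilon},R)}_{R\in\D}\in\ell^{\Q}$. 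The proof of Lemma \ref{constantcontra} uses only the $C^{\infty}$ smoothness of its symbol near a point where the horizontal gradient is nonzero, so it applies to $b_{\epsilon}$ and forces $b_{\epsilon}$ to be constant (alternatively, one multiplies $b_{\epsilon}$ by a smooth cutoff equal to $1$ on a large ball and checks directly that the associated oscillation sequence remains in $\ell^{\Q}$). Letting $\epsilon\to0$, $b_{\epsilon}\to b$ in $L_{loc}^{1}(\GG)$, so $b$ is a.e.\ constant.

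The only genuinely delicate point is this last step: one has to ensure that passing to the smooth $b_{\epsilon}$ does not inflate the Schatten norm, and this is exactly where the left-translation invariance of $T$ together with the Banach-ideal property of $\L_{\Q}$ (hence $\Q>1$) are used; the telescoping bookkeeping of the second step and the mollification are otherwise routine.
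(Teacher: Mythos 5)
Your proposal is correct and follows essentially the same route as the paper: reduce to smooth $b$ by mollification, use Lemma \ref{core} (with a single dyadic system) to show $\{\E(b,R)\}_{R\in\D}\in\ell^{\Q}$, and invoke Lemma \ref{constantcontra}. You fill in two details the paper treats tersely---the telescoping passage from $\EE_{k+N_0}-\EE_k$ to the single-scale differences appearing in Lemma \ref{core}, and the translation-invariance/unitarity argument showing $\|[M_{b_\epsilon},T]\|_{\L_\Q}\le\|[M_b,T]\|_{\L_\Q}$---where the paper simply cites density of $C_c^\infty$ in ${\rm VMO}$ together with identity approximation.
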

\begin{proof}
Note that $C_{c}^{\infty}(\GG)$ is dense in ${\rm VMO}(\GG)$ (see e.g. \cite{CDLW2019}). By identity approximation in \cite[Prop~(1.20)]{FS1982}, it suffices to suppose $b\in C_{c}^{\infty}(\GG)$. We are going to show that 
\begin{align*}
\Norm{\set{\E(b,R)}_{R\in\D}}_{\ell^{\Q}}
\lesssim\Norm{[M_{b},T]}_{\L_{\Q}}.
\end{align*}
Indeed, for $R\in \D_k$, writing
$$\EE_{k+N_{0}}(b)(x)-\EE_{k+N_{0}}(b)(y)=\EE_{k+N_{0}}(b)(x)-b_{R}+b_{R}-\EE_{k+N_{0}}(b)(y),$$
it follows from the triangle inequality that
\begin{align*}
\E(b,R)
\leq2\fint_{R}|\EE_{k+N_{0}}(b)(x)-b_{R}|dx
=2 \fint_{R}|\EE_{k+N_{0}}(b)(x)-\EE_{k}(b)(x)|dx.
\end{align*}
By Lemma \ref{core},
\begin{align*}
\Norm{\set{\E(b,R)}_{R\in\D}}_{\ell^{\Q}}
\lesssim\Big(\sum_{k\in\ZZ}2^{k\Q}\Norm{\EE_{k+N_{0}}(b)-\EE_{k}(b)}_{\Q}^{\Q}\Big)^{\frac{1}{\Q}}
\lesssim\Norm{[M_{b},T]}_{\L_{\Q}}.
\end{align*}
Then Lemma \ref{constantcontra} immediately implies that $b$ is a constant.
\end{proof}

\text{Proof of Theorem \ref{mt} \eqref{mtb} :}
\begin{proof}
When $0<p\leq \Q$, by H\"{o}lder's inequality,
\begin{align*}
\Norm{[M_{b},T]}_{\L_{\Q}}\leq\Norm{[M_{b},T]}_{\L_{\infty}}^{1-\frac{p}{\Q}}\Norm{[M_{b},T]}_{\L_{p}}^{\frac{p}{\Q}}.
\end{align*}
Then the desired assertion follows immediately from Proposition \ref{constantthm}.
\end{proof}

\section{Proof of \Cref{endponit-estimates}}\label{endponit-estimates-sect}
The proof of \Cref{endponit-estimates} is divided into two parts in which the proof of sufficiency is arranged in the first part and the proof of necessity is arranged in the second part.

\subsection{Alpert bases}
The terminology of Alpert bases is first introduced in \cite{A1993}, and then adapted to positive Borel measures on $\RR^{n}$ in \cite{RSW2021}. For our purpose, let us construct the Alpert bases on $\GG$.
Recall that $\GG=(\RR^{d},\circ)$ with homogeneous dimension $\Q$.
For $\alpha\in\NN^{d}$, write $\alpha$ in the form related to the stratification of $\GG$ by $(\alpha^{(1)},\alpha^{(2)},\ldots,\alpha^{(\tau)})$. We use the notation
\begin{align*}
|\alpha|_{\GG}=\sum_{j=1}^{\tau}\sum_{k=1}^{n_{j}}j\cdot\alpha_{k}^{(j)}.
\end{align*}
For $\alpha\in\NN^{d}$ and $x\in\RR^{d}$, denote $x^{\alpha}=x_{1}^{\alpha_{1}}x_{2}^{\alpha_{2}}\cdots x_{d}^{\alpha_{d}}$.
For $Q\in\D$, number the children of $Q$ by $Q_{1},\ldots,Q_{2^{\Q}}$ and set
\begin{align*}
|Q_{k}^{(\beta)}|=\int_{Q_{k}}x^{\beta}dx,\quad \beta\in\NN^{d},\quad 1\leq k\leq2^{\Q}.
\end{align*}
Let $\displaystyle h(x)=\sum_{k=1}^{2^{\Q}}\sum_{|\alpha|_{\GG}\leq m-1} c_{\alpha,k}x^{\alpha}\unit_{Q_{k}}$ with $c_{\alpha,k}\in\RR$
and we consider (cancellation conditions) equations
\begin{align*}
\int_{Q}h(x)x^{\beta}dx=0\quad\mbox{for}\quad\mbox{all}\quad |\beta|_{\GG}\leq m-1.
\end{align*}
Denote $N_{d,m}$ by the cardinality of set $\set{\alpha\in\NN^{d}:|\alpha|_{\GG}\leq m-1}$ and label its elements as $\alpha_{(1)},\ldots,\alpha_{(N_{d,m})}$. Then the above equations can be interpreted in the following way
\begin{align*}
\sum_{k=1}^{2^{\Q}}
\begin{pmatrix}
  |Q_{k}^{(\alpha_{(1)}+\beta_{(1)})}| & |Q_{k}^{(\alpha_{(1)}+\beta_{(2)})}| & \cdots & |Q_{k}^{(\alpha_{(1)}+\beta_{(N_{d,m})})}| \\
  |Q_{k}^{(\alpha_{(2)}+\beta_{(1)})}| & |Q_{k}^{(\alpha_{(2)}+\beta_{(2)})}| & \cdots & |Q_{k}^{(\alpha_{(2)}+\beta_{(N_{d,m})})}| \\
  \vdots & \vdots & \ddots & \vdots \\
  |Q_{k}^{(\alpha_{(N_{d,m})}+\beta_{(1)})}| & |Q_{k}^{(\alpha_{(N_{d,m})}+\beta_{(2)})}| & \cdots & |Q_{k}^{(\alpha_{(N_{d,m})}+\beta_{(N_{d,m})})}| 
\end{pmatrix}
\begin{pmatrix}
  c_{\alpha^{(1)},k} \\
  c_{\alpha^{(2)},k} \\
  \vdots \\
  c_{\alpha^{(N_{d,m})},k}
\end{pmatrix}
=\begin{pmatrix}
  0 \\
  0 \\
  \vdots \\
  0
\end{pmatrix}.
\end{align*}
The number of variables is strictly greater than the number of equations, so this linear equation system has non-zero solutions. This implies that cancellation conditions are available.
For $Q\in\D$, let $child(Q)$ be the set containing the $2^{\Q}$ children of $Q$. For $m\in\NN$, denote
\begin{align*}
L^{2}_{m}(Q)={\rm Span}
\set{h(x)=\sum_{Q'\in child(Q)}\unit_{Q'}p_{Q',m}(x):\int_{Q}h(x)x^{\beta}dx=0\quad \mbox{for}\quad \beta\in\NN^{d}\quad\mbox{with}\quad|\beta|_{\GG}\leq m-1},
\end{align*}
where $\displaystyle p_{Q',m}(x)=\sum_{\alpha\in\NN^{d},|\alpha|_{\GG}\leq m-1}a_{Q',\alpha}x^{\alpha}$ with $a_{Q',\alpha}\in\RR$ is a polynomial on $\GG$. A polynomial of the form $\displaystyle\sum_{Q'\in child(Q)}\unit_{Q'}p_{Q',m}(x)$ is called an Alpert function.

Let $Q_{0}$ be the translation of cubes in $\D_{0}$ with respect to their centers. Given $m\in\NN$, a polynomial $\displaystyle P(x)=\sum_{\alpha\in\NN^{d},|\alpha|_{\GG}\leq m-1}c_{\alpha}x^{\alpha}$ of homogeneous degree not greater than $m-1$ is said to be $Q_{0}$-normalized if $\displaystyle\sup_{x\in Q_{0}}|P(x)|=1$. 
For any polynomial $P$, we say it is $Q$-normalized if $P^{Q}$ is $Q_{0}$-normalized where $P^{Q}(x)=P\left(c_{Q}\delta_{\frac{1}{r_{Q}}}(x)\right)$ ($c_{Q}$ is the center of $Q$ and $r_{Q}$ is its radius).
The following lemma is analogous to \cite[Lemma 20]{Sawyer2022}.
\begin{lemma}\label{Q-normalized lemma}
For $m\in\NN$, there is a constant $C_{m,\GG}$ satisfying that, for each  $Q\in\D$ and all $Q$-normalized polynomial on $\GG$ of homogeneous degree not greater than $m$,  we have
\begin{align*}
|Q|\leq C_{m,\GG}\int_{Q}|P(x)|^{2}dx.
\end{align*}
\end{lemma}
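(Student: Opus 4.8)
The plan is to reduce to the model cube $Q_0$ by dilation and translation, and then invoke compactness on the finite-dimensional space of polynomials of homogeneous degree at most $m-1$. First I would record the relevant change of variables: if $P$ is $Q$-normalized, then by definition $P^Q(x) = P(c_Q \delta_{1/r_Q}(x))$ is $Q_0$-normalized, i.e. $\sup_{x\in Q_0}|P^Q(x)| = 1$. Substituting $y = c_Q \delta_{1/r_Q}(x)$ (so $x = c_Q \delta_{r_Q}(y)$ runs over $Q$ as $y$ runs over $Q_0$) and using that the Jacobian of $y\mapsto c_Q\delta_{r_Q}(y)$ is $r_Q^{\Q} = |Q|/|Q_0|$, we get
\begin{align*}
\int_Q |P(x)|^2\, dx = \frac{|Q|}{|Q_0|} \int_{Q_0} |P^Q(y)|^2\, dy.
\end{align*}
Hence the claimed inequality $|Q| \le C_{m,\GG}\int_Q |P(x)|^2\,dx$ is equivalent to the single model-cube estimate
\begin{align*}
|Q_0| \le C_{m,\GG} \int_{Q_0} |R(y)|^2\, dy \qquad \text{for every } Q_0\text{-normalized polynomial } R \text{ of homogeneous degree} \le m-1.
\end{align*}

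Next I would prove this model estimate by a compactness argument. Let $\mathcal{P}_m$ denote the (finite-dimensional) real vector space of polynomials $R(y) = \sum_{|\alpha|_\GG \le m-1} c_\alpha y^\alpha$. On $\mathcal{P}_m$ consider the two functionals $R \mapsto \|R\|_{\infty,Q_0} := \sup_{y\in Q_0}|R(y)|$ and $R \mapsto \|R\|_{2,Q_0} := \bigl(\int_{Q_0}|R(y)|^2\,dy\bigr)^{1/2}$. Both are norms on $\mathcal{P}_m$: $\|\cdot\|_{\infty,Q_0}$ is clearly a norm (a polynomial vanishing on the open set $Q_0$ is zero), and $\|\cdot\|_{2,Q_0}$ is a norm for the same reason (if $\int_{Q_0}|R|^2=0$ then $R\equiv 0$ on $Q_0$, hence $R\equiv 0$). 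Since $\mathcal{P}_m$ is finite-dimensional, all norms on it are equivalent; in particular there is a constant $C_{m,\GG}>0$ (depending only on $m$, $d=\dim\g$, and the fixed cube $Q_0$, hence only on $m$ and $\GG$) such that
\begin{align*}
\|R\|_{\infty,Q_0}^2 \le \frac{C_{m,\GG}}{|Q_0|}\, \|R\|_{2,Q_0}^2 \qquad \text{for all } R\in\mathcal{P}_m.
\end{align*}
Applying this to a $Q_0$-normalized $R$, for which $\|R\|_{\infty,Q_0}=1$, gives $|Q_0| \le C_{m,\GG}\int_{Q_0}|R(y)|^2\,dy$, which is the model estimate. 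Alternatively, one can avoid invoking norm-equivalence abstractly and instead argue by contradiction: if no such constant existed there would be a sequence $R_n\in\mathcal{P}_m$ with $\|R_n\|_{\infty,Q_0}=1$ but $\int_{Q_0}|R_n|^2\to 0$; by compactness of the unit sphere of $(\mathcal{P}_m,\|\cdot\|_{\infty,Q_0})$ a subsequence converges to some $R_\infty$ with $\|R_\infty\|_{\infty,Q_0}=1$, and passing to the limit (coefficients converge, so convergence is uniform on the bounded set $Q_0$) forces $\int_{Q_0}|R_\infty|^2 = 0$, whence $R_\infty\equiv 0$, contradicting $\|R_\infty\|_{\infty,Q_0}=1$.

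I do not anticipate a genuine obstacle here; the only points requiring a little care are (i) bookkeeping the dilation/Jacobian correctly so that the power $r_Q^\Q = |Q|/|Q_0|$ appears (this uses that the homogeneous dimension $\Q$ governs how Lebesgue measure scales under $\delta_r$, as recorded earlier), and (ii) checking that $P^Q$ is again a polynomial of homogeneous degree $\le m-1$ on $\GG$ — this holds because the map $y\mapsto c_Q\,\delta_{1/r_Q}(y)$ is an affine-plus-dilation transformation that, by the definition of $|\cdot|_\GG$ as the $\delta$-weighted degree, sends a monomial $y^\alpha$ with $|\alpha|_\GG\le m-1$ to a linear combination of monomials of the same weighted degree (up to lower order terms coming from the group translation by $c_Q$, which only decrease $|\cdot|_\GG$), so that the normalization and degree condition are preserved under the correspondence $P \leftrightarrow P^Q$. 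Since $Q_0$ is a fixed cube depending only on $\GG$, the resulting constant depends only on $m$ and $\GG$, as claimed.
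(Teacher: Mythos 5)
The paper offers no proof of this lemma; it simply asserts that it is ``analogous to \cite[Lemma 20]{Sawyer2022}'', so there is no in-paper argument to compare against. Your proof is correct and is the natural one behind the cited statement: reduce to the model cube $Q_0$ by the left-translation-and-dilation correspondence $P\leftrightarrow P^Q$, account for the Jacobian $r_Q^{\Q}=|Q|/|Q_0|$ of $y\mapsto c_Q\cdot\delta_{r_Q}(y)$, and then invoke equivalence of the $L^\infty(Q_0)$ and $L^2(Q_0)$ norms on the finite-dimensional space $\mathcal P_m$ of polynomials of homogeneous degree $\le m-1$ (either abstractly or via the compactness/contradiction version you sketch). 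Both are genuine norms on $\mathcal P_m$ because $Q_0$ contains an open ball, so a polynomial vanishing there vanishes identically.

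Two small bookkeeping remarks. First, the paper's formula $P^Q(x)=P\bigl(c_Q\delta_{1/r_Q}(x)\bigr)$ appears to contain a typo: as written, $x\mapsto c_Q\delta_{1/r_Q}(x)$ does not carry $Q_0$ onto $Q$. The intended map is $x\mapsto c_Q\cdot\delta_{r_Q}(x)$, which is exactly what you use; note though that the inverse of $y=c_Q\cdot\delta_{1/r_Q}(x)$ would be $x=\delta_{r_Q}(c_Q^{-1}\cdot y)$, not $x=c_Q\delta_{r_Q}(y)$ as you wrote, so your parenthetical inversion is only consistent with the corrected formula, not the one printed in the paper. Second, you correctly observe that the class of polynomials of homogeneous degree $\le m-1$ is stable under both $\delta_{r_Q}$ (which only rescales coefficients) and left translation by $c_Q$: substituting $x=c_Q\cdot y$ replaces $x^{(j)}_k$ by $c^{(j)}_k+y^{(j)}_k+P^{(j)}_k(c_Q,y)$, and each resulting monomial has $|\cdot|_\GG$-degree at most $j$ in $y$, so the correspondence $P\leftrightarrow P^Q$ stays within $\mathcal P_m$ and the reduction is legitimate. (The lemma's statement ``degree not greater than $m$'' versus the definition's ``degree $\le m-1$'' is an indexing slip in the paper; your use of $m-1$ matches the definition of $Q$-normalized and is the right choice.)
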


If $\int_{Q}x^{\beta}dx\neq0$ for all $|\beta|_{\GG}\leq m-1$, we can decompose $L^{2}_{m}(Q)$ as the following way
\begin{align}\label{L2Q-decomposition}
{\rm Span}\set{\unit_{Q'}x^{\alpha}:Q'\in child(Q),\quad|\alpha|_{\GG}\leq m-1}
\ominus\left(\bigoplus_{\alpha:|\alpha|_{\GG}\leq m-1}\unit_{Q}x^{\alpha}\right).
\end{align}
Let $\Delta_{Q}^{m}$ be the orthogonal projection onto $L^{2}_{m}(Q)$ and let $\EE_{Q}^{m}$ be the orthogonal projection onto the finite dimensional subspace
\begin{align*}
{\rm Span}\set{\unit_{Q}(x)x^{\alpha}:0\leq|\alpha|_{\GG}\leq m-1}.
\end{align*}
We next investigate the existence of the Alpert basis on $\lp{2}$ and its properties.
\begin{theorem}\label{Alpert-bases}
Given fixed $m\in\NN$, let $\D$ be a dyadic cube system on $\GG$ with $\int_{Q}x^{\beta}dx\neq0$ for all $Q\in\D$ and $|\beta|_{\GG}\leq m-1$.
There exists an orthonomal basis   $\set{h_{Q,m}^{j}}_{j\in\Gamma_{\Q}^{m},Q\in\D}$ of $\lp{2}$, which is called Alpert basis of order m, satisfying the following properties:
\begin{enumerate}[\rm(a)]
\item\label{orthoganal basis} $\set{h_{Q,m}^{j}}_{j\in\Gamma_{\Q}^{m}}$ is the orthonomal basis of $L^{2}_{m}(Q)$ with  $\supp(h_{Q,m}^{j})\subset Q$ and $\inner{h_{P,m}^{j},h_{Q,m}^{k}}=0$ for $P\neq Q\in\D$, $j,k\in\Gamma_{\Q}^{m}$.
Here $\Gamma_{\Q}^{m}$ is an index set with cardinality not greater than $(2^{\Q}-1)\binom{d+m-1}{d}$.

\item\label{moment conditions} The moment conditions:
\begin{align*}
\int_{Q}h_{Q,m}^{j}(x)x^{\alpha}dx=0\quad\mbox{for}\quad |\alpha|_{\GG}\leq m-1,\quad j\in\Gamma_{\Q}^{m},Q\in\D.
\end{align*} 

\item\label{size conditions} Size estimates:
\begin{align}\label{expect size conditions}
\Norm{\EE_{Q}^{m}(f)}_{\lp{\infty}}\lesssim\fint_{Q}|f(\xi)|d\xi,\quad Q\in\D,f\in L_{loc}(\GG),
\end{align}
and
\begin{align}\label{Haar size conditions}
\Norm{h_{Q,m}^{j}}_{\lp{\infty}}\simeq|Q|^{-\frac{1}{2}}\quad \mbox{for}\quad Q\in\D.
\end{align}

\item\label{telescoping identities} The telescoping identities:
\begin{align*}
\unit_{P}\sum_{Q\subset R\subset P}\sum_{j\in\Gamma_{\Q}^{m}}\inner{f,h_{R,m}^{j}}h_{R,m}^{j}=\EE_{Q}^{m}(f)-\EE_{P}^{m}(f)
\quad\mbox{for}\quad f\in\lp{2}, P,Q\in\D.
\end{align*}

\item\label{pointwise almost everywhere} We have  $\displaystyle f=\sum_{Q\in\D}\sum_{j\in\Gamma_{\Q}^{m}}\inner{f,h_{Q,m}^{j}}h_{Q,m}^{j}$ both in $\lp{2}$-norm and pointwise almost everywhere.

\end{enumerate}
\end{theorem}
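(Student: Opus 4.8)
The plan is to construct the functions $h_{Q,m}^j$ one dyadic cube at a time, by a Gram--Schmidt procedure inside each finite-dimensional space $L^2_m(Q)$, and then verify the six properties in turn; this follows the scheme of the Euclidean Alpert construction in \cite{A1993,RSW2021}. Fix $Q\in\D$. By the decomposition \eqref{L2Q-decomposition}, $L^2_m(Q)$ is the orthogonal complement, inside ${\rm Span}\{\unit_{Q'}x^\alpha:Q'\in child(Q),\ |\alpha|_\GG\le m-1\}$, of the subspace $\bigoplus_{|\alpha|_\GG\le m-1}\unit_Q x^\alpha$. Since $|\alpha|_\GG\ge|\alpha|_1$, the number of admissible exponents is $N_{d,m}\le\binom{d+m-1}{d}$, and $Q$ has at most $2^{\Q}$ children, so $\dim L^2_m(Q)\le(2^{\Q}-1)\binom{d+m-1}{d}$; by the dilation-invariance of $\D$ this dimension is independent of $Q$, and I would take $\Gamma_\Q^m$ to be an index set of that cardinality. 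Applying Gram--Schmidt to any basis of $L^2_m(Q)$ produces an orthonormal basis $\{h_{Q,m}^j\}_{j\in\Gamma_\Q^m}$ of $L^2_m(Q)$; each $h_{Q,m}^j$, being a linear combination of the $\unit_{Q'}x^\alpha$, is supported in $Q$ and agrees on each child of $Q$ with a polynomial of $|\cdot|_\GG$-degree $\le m-1$, while membership in $L^2_m(Q)$ is precisely the moment condition \eqref{moment conditions}. This settles \eqref{moment conditions} and the part of \eqref{orthoganal basis} internal to a single cube.

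Next I would establish cross-cube orthogonality and $\lp2$-completeness, which is Hilbert-space bookkeeping. If $P\ne Q$ in $\D$, then either their closures are disjoint (so $h_{P,m}^k h_{Q,m}^j\equiv0$) or one contains the other, say $Q\subsetneq P$; in the latter case $Q$ lies in a single child $P'$ of $P$, on which $h_{P,m}^k$ agrees with a polynomial $p$ of $|\cdot|_\GG$-degree $\le m-1$, so $\inner{h_{Q,m}^j,h_{P,m}^k}=\int_Q h_{Q,m}^j\,p\,dx=0$ by \eqref{moment conditions}. For completeness, set $\EE_k^m f=\sum_{Q\in\D_k}\EE_Q^m f$, the orthogonal projection onto the subspace $V_k$, defined as the $\lp2$-closure of ${\rm Span}\{\unit_Q x^\alpha:Q\in\D_k,\ |\alpha|_\GG\le m-1\}$. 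One checks $V_k\subset V_{k+1}$ (because $\unit_Q x^\alpha=\sum_{Q'\in child(Q)}\unit_{Q'}x^\alpha$), that $\bigcup_k V_k$ is dense in $\lp2$ (it contains all dyadic step functions), and that $\bigcap_k V_k=\{0\}$ (an $\lp2$ function restricting to a polynomial of degree $\le m-1$ on arbitrarily large cubes must vanish), so $\EE_k^m\to\id$ and $\EE_{-k}^m\to0$ strongly as $k\to\infty$. Since \eqref{L2Q-decomposition} identifies $\Delta_Q^m:=\sum_{j\in\Gamma_\Q^m}\inner{\cdot,h_{Q,m}^j}h_{Q,m}^j$ with the projection onto $L^2_m(Q)$, and the span of the $\unit_Q x^\alpha$ sits inside the span of the $\unit_{Q'}x^\alpha$ over $Q'\in child(Q)$ with respective orthogonal projections $\EE_Q^m$ and $\sum_{Q'\in child(Q)}\EE_{Q'}^m$, we get $\Delta_Q^m=\sum_{Q'\in child(Q)}\EE_{Q'}^m-\EE_Q^m$. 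Summing this two-term relation over the totally ordered chain $\{R\in\D:Q\subseteq R\subseteq P\}$ and cancelling the intermediate expectations yields the telescoping identity \eqref{telescoping identities}; summing over all $Q\in\D$ and using $\EE_k^m\to\id,0$ gives $f=\sum_{Q\in\D}\Delta_Q^m f$ in $\lp2$, which is simultaneously the $\lp2$ half of \eqref{pointwise almost everywhere} and the missing completeness, finishing \eqref{orthoganal basis} and \eqref{telescoping identities}.

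The one genuinely analytic ingredient, and the step I expect to be the main obstacle, is the size estimate \eqref{size conditions}, which relies on Lemma \ref{Q-normalized lemma}. For the first bound, write $P=\EE_Q^m(f)$ and $P=\Norm{P}_{\lp\infty}\tilde P$ with $\tilde P$ a $Q$-normalized polynomial; Lemma \ref{Q-normalized lemma} gives $|Q|\le C_{m,\GG}\int_Q|\tilde P|^2\,dx=C_{m,\GG}\Norm{P}_{\lp\infty}^{-2}\inner{f,P}\le C_{m,\GG}\Norm{P}_{\lp\infty}^{-1}\int_Q|f|\,dx$, which rearranges to \eqref{expect size conditions}. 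The same computation applied to $h_{Q,m}^j$ on each child $Q'$ of $Q$ (using $\Norm{h_{Q,m}^j}_{\lp2}=1$ and $|Q'|\simeq|Q|$) gives $\Norm{h_{Q,m}^j}_{\lp\infty}\lesssim|Q|^{-1/2}$, while $1=\int_Q|h_{Q,m}^j|^2\,dx\le|Q|\,\Norm{h_{Q,m}^j}_{\lp\infty}^2$ supplies the reverse inequality, so \eqref{Haar size conditions} follows. To upgrade the $\lp2$ expansion to pointwise a.e., note that \eqref{expect size conditions} gives, for $f$ continuous of compact support, $\Norm{\EE_Q^m f-f(x_0)}_{L^\infty(Q)}=\Norm{\EE_Q^m\big(f-f(x_0)\big)}_{L^\infty(Q)}\lesssim\sup_Q|f-f(x_0)|\to0$ as $Q\ni x_0$ shrinks, so $\EE_k^m f\to f$ everywhere; and for general $f\in\lp2$, \eqref{expect size conditions} dominates the maximal function $\sup_k|\EE_k^m f|$ by the Hardy--Littlewood maximal function on the space of homogeneous type $\GG$, which is of weak type $(1,1)$ and bounded on $\lp2$, so the standard density argument promotes convergence to a.e., completing \eqref{pointwise almost everywhere}. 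Once Lemma \ref{Q-normalized lemma} is available the remaining work is routine, so the substance of the theorem is concentrated in that lemma together with the purely algebraic relation $\Delta_Q^m=\sum_{Q'\in child(Q)}\EE_{Q'}^m-\EE_Q^m$.
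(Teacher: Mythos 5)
Your construction of the $h_{Q,m}^j$ via Gram--Schmidt inside each $L^2_m(Q)$, the use of the moment conditions for cross-cube orthogonality, and the reliance on Lemma~\ref{Q-normalized lemma} for \eqref{Haar size conditions} all match the paper's argument exactly; the two-term relation $\Delta_Q^m=\sum_{Q'\in child(Q)}\EE_{Q'}^m-\EE_Q^m$ driving the telescoping is also the same. Where you diverge is in part \eqref{pointwise almost everywhere}. The paper establishes $L^2$-completeness by quoting the Haar-basis completeness of $\{L^2_1(Q)\}_{Q\in\D}$ from \cite{KLPW2016} and then asserting $L^2_1(Q)\subset\overline{\rm Span}\{L^2_m(R):R\supset Q\}$, leaving the almost-everywhere convergence entirely unaddressed; you instead build the nested filtration $V_k$, check $\bigcup_k V_k$ dense and $\bigcap_k V_k=\{0\}$, and then upgrade to a.e.\ convergence by dominating $\sup_k|\EE_k^m f|$ by the Hardy--Littlewood maximal function. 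Your route is more self-contained and is the only one of the two that actually proves the pointwise statement; you also supply the Cauchy--Schwarz derivation of \eqref{expect size conditions}, which the paper delegates to \cite{Sawyer2022}. Two small cautions, both present in the paper as well: the claim that $\dim L^2_m(Q)$ is independent of $Q$ ``by dilation-invariance'' is not automatic in a metric-space dyadic system where cubes within a generation need not be congruent (the theorem only needs the upper bound, so this is harmless but should not be asserted); and in the telescoping step, summing $\Delta_R^m=\sum_{R'\in child(R)}\EE_{R'}^m-\EE_R^m$ over the chain from $Q$ to $P$ does \emph{not} cancel the intermediate expectations outright --- the sibling projections $\EE_{R'}^m$ for $R'$ not in the chain survive, and one must first restrict (multiply by $\unit_Q$, matching the indicator already present in the stated identity) before the cancellation goes through.
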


The following proofs are induced from \cite{RSW2021} and \cite{Sawyer2022}.
\begin{proof}
For \eqref{orthoganal basis}, 
note that $L^{2}_{m}(Q)$  is a finite dimensional linear space for $Q\in\D$. By theory in Hilbert space, there exists an orthonomal basis $\set{h_{Q,m}^{j}}_{j\in\Gamma_{\Q}^{m}}$ of $L^{2}_{m}(Q)$ consisting of Alpert functions. 
Clearly, $h_{Q,m}^{j}$ is supported in $Q$.
Note that, for each $Q\in\D$, the dimension of space ${\rm Span}\set{\unit_{Q}(x)x^{\alpha}:0\leq|\alpha|_{\GG}\leq m-1}$ is not greater than $\displaystyle\sum_{j=1}^{m-1}\binom{d-1+j}{j}=\binom{d-1+m}{m-1}$. Since there are $2^{\Q}$ children in $child(Q)$ for each $Q\in\D$, it follows from \eqref{L2Q-decomposition} that
\begin{align*}
{\rm dim}(L^{2}_{m}(Q))\leq(2^{\Q}-1)\binom{d-1+m}{m-1}=(2^{\Q}-1)\binom{d+m-1}{d}.
\end{align*}
This is the estimate for the cardinality of $\Gamma_{\Q}^{m}$.

Let us claim that $L^{2}_{m}(P)$ is orthogonal to $L^{2}_{m}(Q)$ for $P\neq Q\in\D$. It is trivial if $P$ or $Q$ does not contain each other. If $P\subset Q$, then the restriction of $h\in L^{2}_{m}(Q)$ to $P$ is a polynomial with homogeneous degree not greater than $m-1$. By definition, $L^{2}_{m}(P)$ is orthogonal to such a function. So is the similar reason for the case $Q\subset P$.
Therefore, $h_{P,m}^{j}$ is orthogonal to $h_{Q,m}^{k}$ for $P\neq Q\in\D$, $j,k\in\Gamma_{\Q}^{m}$.

For \eqref{moment conditions}, combining $h_{Q,m}^{j}\in L^{2}_{m}(Q)$ and the definition of $L^{2}_{m}(Q)$, we obtain the vanishing moments.

For \eqref{size conditions}, since $\frac{h_{Q,m}^{j}}{\Norm{h_{Q,m}^{j}}_{L^{\infty}(Q)}}$ is $Q$-normalized, it follows from Lemma \ref{Q-normalized lemma} and $\Norm{h_{Q,m}^{j}}_{\lp{2}}=1$ that
\begin{align*}
|Q|\simeq\Norm{h_{Q,m}^{j}}_{L^{\infty}(Q)}^{-2}\int_{Q}|h_{Q,m}^{j}(x)|^{2}dx
=\Norm{h_{Q,m}^{j}}_{\lp{\infty}}^{-2}.
\end{align*}
This is \eqref{Haar size conditions}. Similar assertions in \cite[4.2.2]{Sawyer2022} give \eqref{expect size conditions}.

For \eqref{telescoping identities}, since $\Delta_{Q}^{m}$ is the orthogonal projection onto $L^{2}_{m}(Q)$ and $\set{h_{Q,m}^{j}}_{j\in\Gamma_{\Q}^{m}}$ is the orthonomal basis of $L^{2}_{m}(Q)$, we have $\displaystyle\Delta_{Q}^{m}(f)=\sum_{j\in\Gamma_{\Q}^{m}}\inner{f,h_{Q,m}^{j}}h_{Q,m}^{j}$. Then, by \eqref{L2Q-decomposition}, $\displaystyle\EE_{Q}^{m}(f)-\EE_{P}^{m}(f)=\unit_{P}\sum_{Q\subset R\subset P}\Delta_{R}^{m}(f)$  gives the identities.

For \eqref{pointwise almost everywhere}, since $L^{2}_{m}(R)$ is contained in $\lp{2}$ for $R\in\D$, we see that the direct sum of $\set{L^{2}_{m}(R)}_{R\in\D}$ is contained in $\lp{2}$, i.e.
\begin{align*}
\bigoplus_{R\in\D} L^{2}_{m}(R)\subset\lp{2}.
\end{align*}
Conversely, by \cite{KLPW2016}, $\lp{2}$ is a direct sum of $\set{L^{2}_{1}(Q)}_{Q\in\D}$ because these spaces are generated by Haar bases. Noting that, for $Q\in\D$, $$L^{2}_{1}(Q)\subset \overline{{\rm Span}}\set{L^{2}_{m}(R):R\in\D,R\supset Q}.$$
we conclude 
\begin{align*}
\lp{2}=\bigoplus_{Q\in\D}L^{2}_{1}(Q)\subset\bigoplus_{R\in\D} L^{2}_{m}(R).
\end{align*}
This is the desired result. 
\end{proof}

Denote $N_{d,m}=\binom{d-1+m}{m-1}$ and let $\set{\omega_{Q,m}^{j}}_{j=1}^{N_{d,m}}$ of the form $\displaystyle \sum_{\alpha\in\NN^{d},|\alpha|_{\GG}\leq m-1}c_{\alpha}x^{\alpha}\unit_{Q}$ be an orthonormal basis for vector space ${\rm Span}\set{\unit_{Q}(x)x^{\alpha}:0\leq|\alpha|_{\GG}\leq m-1}$ with respect to the inner product of $\lp{2}$. It is obvious that $\supp(\omega_{Q,m}^{j})\subset Q$. By a similar argument to \eqref{Haar size conditions} (see also \cite[4.2.2]{Sawyer2022}), we obtain
\begin{align}\label{basis-EQm}
\Norm{\omega_{Q,m}^{j}}_{\infty}\simeq|Q|^{-\frac{1}{2}}.
\end{align}
Since $\EE_{Q}^{m}$ is the orthogonal projection onto the finite dimensional subspace
${\rm Span}\set{\unit_{Q}(x)x^{\alpha}:0\leq|\alpha|_{\GG}\leq m-1}$, we are able to give an explicit formula of $\EE_{Q}^{m}$ as following
\begin{align}\label{EQm}
\EE_{Q}^{m}=\sum_{j=1}^{N_{d,m}}\inner{\cdot,\omega_{Q,m}^{j}}\omega_{Q,m}^{j},\quad Q\in\D.
\end{align}

Furthermore, by \eqref{telescoping identities} and \eqref{pointwise almost everywhere} in Theorem \ref{Alpert-bases},
\begin{align}\label{Haar-expansion}
f\unit_{R}-\EE_{R}^{m}(f)=\sum_{\substack{Q\in\D\\Q\subset R}}\Delta_{Q}^{m}(f)=\sum_{\substack{Q\in\D\\Q\subset R}}\sum_{j\in\Gamma_{\Q}^{m}}\inner{f,h_{Q,m}^{j}}h_{Q,m}^{j}\quad\mbox{for}\quad R\in\D, ~f\in \lp{2}.
\end{align}
For later use, let us denote the local difference between  $f\unit_{R}$ and $\EE_{R}^{m}(f)$ by
\begin{align*}
\HH_{R}^{m}(f)(x)=\sum_{\substack{Q\in\D\\Q\subset R}}\sum_{j\in\Gamma_{\Q}^{m}}\inner{f,h_{Q,m}^{j}}h_{Q,m}^{j}(x),\quad f\in\lp{p},~m\in\NN.
\end{align*}

\subsection{Proof of sufficiency in \Cref{endponit-estimates}}\label{endponit-estimates-sect-su}
In this part, we are going to show
\begin{align*}
\Norm{[M_{b},T]}_{\L_{\Q,\infty}}\lesssim \Norm{b}_{\swq}.
\end{align*}
We will apply the Alpert bases to locally expand the kernel $K$. But before that, we need to decompose the kernel $K$ on the cubes from the dyadic Whitney decomposition on $\GG$. 
So let us state the dyadic Whitney decomposition on $\GG$ at first.

Let $\D$ be a dyadic cube system on $\GG$ and $\D_{k}$ be the corresponding k-th dyadic cube system.
Denote the product dyadic cube system on $\GG\times\GG$ by 
$$\D_{prod}=\set{\D_{k}\times\D_{k}}_{k\in\ZZ}.$$
Here, the product set $\D_{k}\times\D_{k}$ is interpreted as the set containing all cubes $Q_{1}\times Q_{2}$ for $Q_{1},Q_{2}\in\D_{k}$.
Using the similar argument as in Euclidean space \cite[Appendix J]{G2014-c} (also \cite[page 168]{Stein1970}), one has the following dyadic Whitney decomposition.
\begin{lemma}\label{Whitney-decomposition}
Suppose that $\Omega$ is an open nonempty proper subset of $\GG\times\GG$. Then there is a countable family $\P$ of dyadic cubes in $\D_{prod}$ such that
\begin{enumerate}[\rm(1)]
\item The cubes in $\P$ have disjoint interiors and $\displaystyle\Omega=\bigcup_{P\in\P}P$;

\item For $P\in\P$, the sidelength of $P$ is comparable to the distance from $P$ to $\Omega^{c}$; 

\item If the boundaries of $P,Q\in\P$ touch, then their sidelengths are comparable; 

\item There are some $\varepsilon>0$ and a relevant positive constant $N_{\P,\varepsilon}$ satisfying that $\displaystyle\sum_{P\in\P}\unit_{(1+\varepsilon)P}(x)\leq N_{\P,\varepsilon}$ for all $x\in\Omega$. 
\end{enumerate}
\end{lemma}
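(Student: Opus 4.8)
The plan is to transplant the classical Euclidean Whitney construction (as in \cite[Appendix J]{G2014-c}, cf.\ \cite[page 168]{Stein1970}) to the quasi-metric setting, with ordinary dyadic cubes replaced by the cubes of $\D_{prod}$. First I would record the ambient structure: equip $\GG\times\GG$ with the product quasi-distance $\rho_{\times}\bigl((u_1,u_2),(v_1,v_2)\bigr)=\max\{\rho(v_1^{-1}u_1),\rho(v_2^{-1}u_2)\}$ and the product Haar measure $\mu\times\mu$. Then $(\GG\times\GG,\rho_{\times},\mu\times\mu)$ is again a space of homogeneous type (with quasi-triangle constant $A_0$ and doubling constant controlled by that of $\GG$), and $\D_{prod}=\{\D_k\times\D_k\}_{k\in\ZZ}$ inherits coordinatewise all the dyadic axioms: the scale-$k$ cubes partition $\GG\times\GG$, a scale-$(k+1)$ cube sits in a unique scale-$k$ ``parent'', and each $P\in\D_k\times\D_k$ satisfies $B_{\times}(c_P,\gamma_12^{-k})\subset P\subset B_{\times}(c_P,\gamma_22^{-k})$. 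Write $\ell(P)=2^{-k}$ for $P\in\D_k\times\D_k$; then $\operatorname{diam}P\lesssim\ell(P)$ and the parent $\widehat P$ has $\ell(\widehat P)=2\ell(P)$.

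Next I would run the selection. For $x\in\Omega$ put $d(x)=\dist(x,\Omega^c)$, which is finite because $\Omega$ is proper and positive because $\Omega$ is open. Fix a large constant $\lambda$ depending only on $A_0,\gamma_2$, and call $Q\in\D_{prod}$ \emph{admissible} if $\dist(Q,\Omega^c)>\lambda\,\ell(Q)$; for $\lambda$ large this in particular forces $Q\subset\Omega$, and via \eqref{triangle-inequality} it keeps all of $Q$ at distance $\gtrsim\ell(Q)$ from $\Omega^c$. For each $x\in\Omega$ the cubes of $\D_{prod}$ containing $x$ form a chain whose side lengths exhaust $\{2^{-k}:k\in\ZZ\}$; using $d(x)>0$ one sees the small cubes in this chain are admissible, and using $d(x)<\infty$ (pick $v\in\Omega^c$ with $\rho_{\times}(x,v)\le2d(x)$) one sees the large ones are not. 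Hence there is a largest admissible cube $Q_x\ni x$, and I set $\P:=\{Q_x:x\in\Omega\}$, a countable subfamily of $\D_{prod}$.

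It then remains to verify (1)--(4). For (1): $\Omega=\bigcup_{P\in\P}P$ since $x\in Q_x\subset\Omega$, and two distinct members of $\P$ cannot be nested (if $Q_x\subsetneq Q_y$ then $Q_y$ is an admissible cube containing $x$ and strictly larger than $Q_x$, contradicting maximality of $Q_x$), so being dyadic they have disjoint interiors. For (2): admissibility gives $\dist(P,\Omega^c)>\lambda\ell(P)$, while the parent $\widehat P$ fails to be admissible (else $Q_x$ would not be maximal), so $\dist(\widehat P,\Omega^c)\le2\lambda\ell(P)$, and then $\dist(P,\Omega^c)\le A_0\bigl(\operatorname{diam}\widehat P+\dist(\widehat P,\Omega^c)\bigr)\lesssim\ell(P)$; thus $\ell(P)\simeq\dist(P,\Omega^c)$. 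For (3): if $z\in\overline P\cap\overline Q$, then $\dist(z,\Omega^c)\ge\dist(P,\Omega^c)\gtrsim\ell(P)$, while $\dist(z,\Omega^c)\le A_0\bigl(\operatorname{diam}\overline Q+\dist(Q,\Omega^c)\bigr)\lesssim\ell(Q)$ by (2); hence $\ell(P)\simeq\ell(Q)$. For (4): choose $\varepsilon>0$ small; if $x\in(1+\varepsilon)P$ for $P\in\P$, the quasi-triangle inequality combined with (2) forces $\ell(P)\simeq\dist(x,\Omega^c)$, so only boundedly many scales $2^{-k}$ occur, and at each fixed scale the cubes with $x\in(1+\varepsilon)P$ all lie in a fixed ball $B_{\times}(x,C2^{-k})$ and contain pairwise disjoint balls of radius $\gamma_12^{-k}$, so the doubling property of $\mu\times\mu$ bounds their number; summing over the $O(1)$ scales produces a finite $N_{\P,\varepsilon}$.

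I expect the only genuine work to be the quasi-metric bookkeeping: the triangle inequality \eqref{triangle-inequality} holds only up to the constant $A_0$, so every comparison of ``distance to $\Omega^c$'' must carry that constant, and one must fix once and for all a precise meaning for the dilate of an abstract dyadic cube, say $(1+\varepsilon)P:=\{u:\rho_{\times}(u,P)<\varepsilon\ell(P)\}$, so that $(1+\varepsilon)P\subset B_{\times}(c_P,(\gamma_2+\varepsilon)\ell(P))$ and the counting in (4) goes through with constants depending only on $A_0,\gamma_1,\gamma_2,\varepsilon$ and the doubling constant. None of this is conceptually deep; the combinatorial skeleton is exactly the Euclidean one, and the verification that $\D_{prod}$ is a legitimate dyadic system on the product space of homogeneous type is routine from the axioms recalled for $\D$.
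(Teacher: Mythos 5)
Your proposal is correct and follows the same route the paper implicitly takes: it cites the Euclidean dyadic Whitney construction (Grafakos, Appendix~J; Stein, p.~168) and asserts that the argument transfers, and you simply carry that transfer out in the product quasi-metric setting $(\GG\times\GG,\rho_\times,\mu\times\mu)$ with the $\D_{prod}$ cubes and the requisite $A_0$-bookkeeping. Nothing is missing; your verification of (1)--(4) is exactly the standard maximal-admissible-cube selection adapted to the quasi-triangle inequality.
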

%
%
%

\begin{proof}[Proof of sufficiency in \Cref{endponit-estimates}]
Let $\D$ be given as in Theorem \ref{Alpert-bases}. 
Let $$\Omega=\GG\times\GG\setminus\set{(x,y)\in\GG\times\GG:x= y}$$ and let $\P$ be its related family of dyadic Whitney decomposition in \Cref{Whitney-decomposition}, that is $\displaystyle\Omega=\bigcup_{P\in\P}P$. 
Therefore, we write $\displaystyle K(x,y)=\sum_{P\in\P}K(x,y)\unit_{P}(x,y)$ with $P=P_{1}\times P_{2}$ where $P_{1},P_{2}\in\D$ have the same sidelength and the distance from $P_{1}$ to $P_{2}$ is comparable to their sidelength.
Thus, for each $R\in\D$, there are at most $N_{\P}$ cubes $V_{R,s}$ related to $R$ such that $R\times V_{R,s}\in\P$.
So we can reorganize the sum
\begin{align*}
K(x,y)=\sum_{P\in\P}K(x,y)\unit_{P}(x,y)=\sum_{R\in\D}\sum_{s=1}^{N_{\P}}K(x,y)\unit_{R\times V_{R,s}}(x,y),
\end{align*}
where $R$ and $V_{R,s}$ has the same sidelength and the distance between them is comparable to their sidelength. 
Now, we apply \eqref{Haar-expansion} to write
\begin{align*}
\unit_{R}(x)K(x,y)\unit_{V_{R,s}}(y)=
(\EE_{R}^{k_{1}}+\HH_{R}^{k_{1}})\otimes(\EE_{V_{R,s}}^{k_{2}}+\HH_{V_{R,s}}^{k_{2}})\left(K(\unit_{R}\otimes\unit_{V_{R,s}})\right)(x,y)
\end{align*}
in the sense of $L^{2}(\GG\times\GG)$ with $\Q<k_{1},k_{2}\leq \gamma_{T}$.
More precisely, we locally decompose the kernel $K$ as the following four parts
\begin{align}\label{kernel-decomposition}
\unit_{R}(x)K(x,y)\unit_{V_{R,s}}(y)
=F_{1,s,R}(x,y)+F_{2,s,R}(x,y)+F_{3,s,R}(x,y)+F_{4,s,R}(x,y),
\end{align}
where
\begin{align*}
&F_{1,s,R}(x,y)=
\EE_{R}^{k_{1}}\otimes\EE_{V_{R,s}}^{k_{2}}\left(K(\unit_{R}\otimes\unit_{V_{R,s}})\right)(x,y),\\
&F_{2,s,R}(x,y)=
\sum_{\substack{Q\in\D\\Q\subset R}}\sum_{j\in\Gamma_{\Q}^{k_{1}}}\int_{Q}\EE_{V_{R,s}}^{k_{2}}\left(K(\xi,\cdot)\right)(y) h_{Q,k_{1}}^{j}(\xi)d\xi h_{Q,k_{1}}^{j}(x) \unit_{V_{R,s}}(y),\\
&F_{3,s,R}(x,y)=
\sum_{\substack{\tilde Q\in\D\\\tilde{Q}\subset V_{R,s}}}\sum_{\wh{j}\in\Gamma_{\Q}^{k_{2}}}\int_{\wh{Q}}\EE_{R}^{k_{1}}\left(K(\cdot,\xi)\right)(x) h_{\wh{Q},k_{2}}^{\wh{j}}(\xi)d\xi h_{\wh{Q},k_{2}}^{\wh{j}}(y) \unit_{R}(x),\\
&F_{4,s,R}(x,y)=
\sum_{\substack{Q\in\D\\Q\subset R}}\sum_{j\in\Gamma_{\Q}^{k_{1}}}\sum_{\substack{\tilde Q\in\D\\\tilde{Q}\subset V_{R,s}}}\sum_{\wh{j}\in\Gamma_{\Q}^{k_{2}}} \lambda^{R}_{Q,j,\tilde Q,\tilde j}h_{Q,k_{1}}^{j}(x)h_{\wh{Q},k_{2}}^{\tilde j}(y).
\end{align*}
Here,
$$\lambda^{R}_{Q,j,\tilde Q,\tilde j}= \int_{\GG}\int_{\GG}\unit_{R}(x)K(x,y)\unit_{V_{R,s}}(y) h_{Q,k_{1}}^{j}(x)h_{\wh{Q},k_{2}}^{\tilde j}(y) dxdy.$$
We next show that the integrals associated to $F_{1,s,R}(x,y)$, $F_{2,s,R}(x,y)$, $F_{3,s,R}(x,y)$ and $F_{4,s,R}(x,y)$ are dominated by the mean oscillation of function $b$.

For $F_{1,s,R}$, write
\begin{align*}
F_{1,s,R}(x,y)
=|R|\EE_{R}^{k_{1}}\otimes\EE_{V_{R,s}}^{k_{2}}\left(K(\unit_{R}\otimes\unit_{V_{R,s}})\right)(x,y) \frac{\unit_{R}(x)}{|R|^{\frac{1}{2}}}\frac{\unit_{V_{R,s}}(y)}{|V_{R,s}|^{\frac{1}{2}}}
=: C_{R,s}(x,y)\frac{\unit_{R}(x)}{|R|^{\frac{1}{2}}}\frac{\unit_{V_{R,s}}(y)}{|V_{R,s}|^{\frac{1}{2}}}.
\end{align*}
By \eqref{expect size conditions} and $|K(\unit_{R}\otimes\unit_{V_{R,s}})(x,y)|\lesssim\frac{1}{|R|}$, one has
\begin{align*}
|C_{R,s}(x,y)|\lesssim|R|\fint_{R}\fint_{V_{R,s}}|K(\xi,\eta)|d\xi d\eta\lesssim1.
\end{align*}
Noting that there is a large positive constant $\gamma>1$ such that $\gamma R$ contains $R\bigcup V_{R,s}$,
we introduce the following notion
\begin{align}\label{osc-gamma}
\osc_{\gamma}(b,R)=\Big(\frac{1}{|R|}\int_{\gamma R}|b(u)-b_{R}|^{3}du\Big)^{\frac{1}{3}},
\end{align}
and denote
\begin{align*}
\Phi_{\gamma,R}(x)=\frac{(b(x)-b_{R})\unit_{R}(x)}{\osc_{\gamma}(b,R)|R|^{\frac{1}{2}}}
\quad\mbox{and}\quad
\wh{\Phi}_{\gamma,V_{R,s}}(y)=\frac{(b_{R}-b(y))\unit_{V_{R,s}}(y)}{\osc_{\gamma}(b,R)|V_{R,s}|^{\frac{1}{2}}}.
\end{align*}
For each cube $R$, rewrite $b(x)-b(y)=(b(x)-b_{R})+(b_{R}-b(y))$. We obtain
\begin{align*}
&\sum_{R\in\D}C_{R,s}(x,y) (b(x)-b(y))\frac{\unit_{R}(x)}{|R|^{\frac{1}{2}}}
\frac{\unit_{V_{R,s}}(y)}{|V_{R,s}|^{\frac{1}{2}}}\\
=&\sum_{R\in\D}\osc_{\gamma}(b,R)C_{R,s}(x,y) \Phi_{\gamma,R}(x)\frac{\unit_{V_{R,s}}(y)}{|V_{R,s}|^{\frac{1}{2}}}
+\sum_{R\in\D}\osc_{\gamma}(b,R)C_{R,s}(x,y) \frac{\unit_{R}(x)}{|R|^{\frac{1}{2}}}\wh{\Phi}_{\gamma,V_{R,s}}(y).
\end{align*}
Thus,
\begin{equation}\label{commutator-F1sR}
\begin{split}
\sum_{R\in\D}\int_{\GG}(b(x)-b(y))F_{1,s,R}(x,y)f(y)dy
&=\sum_{R\in\D}\int_{\GG}C_{R,s}(x,y)(b(x)-b(y))
\frac{\unit_{R}(x)}{|R|^{\frac{1}{2}}}\frac{\unit_{V_{R,s}}(y)}{|V_{R,s}|^{\frac{1}{2}}}f(y)dy\\
&=\sum_{R\in\D}\osc_{\gamma}(b,R) \Phi_{\gamma,R}(x)
\int_{\GG}C_{R,s}(x,y)\frac{\unit_{V_{R,s}}(y)}{|V_{R,s}|^{\frac{1}{2}}}f(y)dy\\
&+\sum_{R\in\D}\osc_{\gamma}(b,R) \frac{\unit_{R}(x)}{|R|^{\frac{1}{2}}}
\int_{\GG}C_{R,s}(x,y)\wh{\Phi}_{\gamma,V_{R,s}}(y)f(y)dy.
\end{split}
\end{equation}
It is obvious that $\supp(\Phi_{\gamma,R})$ is contained in $R$ and
$\supp(\wh{\Phi}_{\gamma,V_{R,s}})$ is contained in $V_{R,s}$. 
Simple calculations yield that 
\begin{align*}
\Norm{\Phi_{\gamma,R}}_{\lp{3}}
\lesssim|R|^{-\frac{1}{6}}
\quad\mbox{and}\quad
\Norm{\wh{\Phi}_{\gamma,V_{R,s}}}_{\lp{3}}
\lesssim|V_{R,s}|^{-\frac{1}{6}}.
\end{align*}
By \Cref{NWO}, the sequences $\set{|\Phi_{\gamma,R}|}_{R\in\D}$ and $\set{|\wh{\Phi}_{\gamma,V_{R,s}}|}_{R\in\D}$ are NWO sequences for index 3. 
Therefore, \eqref{commutator-F1sR} and \Cref{compact-upper-bd-function} imply that
\begin{align}\label{F-1-s-R}
\Norm{\sum_{R\in\D}[M_{b},F_{1,s,R}]}_{\L_{\Q,\infty}}
\lesssim\Norm{\set{\osc_{\gamma}(b,R)}_{R\in\D}}_{\ell^{\Q,\infty}},
\end{align}
where $F_{1,s,R}$ represents the integral operator with $F_{1,s,R}(x,y)$ as its integral kernel.

For $F_{2,s,R}$, using \eqref{Haar-expansion}, we write
\begin{align*}
&\sum_{j\in\Gamma_{\Q}^{k_{1}}}\sum_{\substack{Q\in\D\\Q\subset R}}\int_{Q}\EE_{V_{R,s}}^{k_{2}}\left(K(\xi,\cdot)\right)(y) h_{Q,k_{1}}^{j}(\xi)d\xi h_{Q,k_{1}}^{j}(x)\\
=&\EE_{V_{R,s}}^{k_{2}}\left(K(x,\cdot)\right)(y)\unit_{R}(x)
-\EE_{R}^{k_{1}}\otimes\EE_{V_{R,s}}^{k_{2}}\left(K(\unit_{R}\otimes\unit_{V_{R,s}})\right)(x,y).
\end{align*}
Therefore,
\begin{align*}
&\sum_{R\in\D}\int_{\GG}(b(x)-b(y))F_{2,s,R}(x,y)f(y)dy\\
=&\sum_{R\in\D}\int_{\GG}(b(x)-b(y))\EE_{V_{R,s}}^{k_{2}}\left(K(x,\cdot)\right)(y)\unit_{R}(x)f(y)dy\\
-&\sum_{R\in\D}\int_{\GG}(b(x)-b(y))
\EE_{R}^{k_{1}}\otimes\EE_{V_{R,s}}^{k_{2}}\left(K(\unit_{R}\otimes\unit_{V_{R,s}})\right)(x,y)f(y)dy.
\end{align*}
In the above last line, the $\L_{\QQ,\infty}$ estimate of the second factor is due to \eqref{F-1-s-R} and it is left to deal with the first factor. 
By \eqref{expect size conditions} and $|K(\unit_{R}\otimes\unit_{V_{R,s}})(x,y)|\lesssim\frac{1}{|R|}$, we obtain
\begin{align*}
|R||\EE_{V_{R,s}}^{k_{2}}\left(K(x,\cdot)\right)(y)\unit_{R}(x)|
\lesssim|R|\fint_{V_{R,s}}|K(x,\eta)|d\eta\unit_{R}(x)
\lesssim1.
\end{align*}
Rewrite
\begin{align*}
&\sum_{R\in\D}\int_{\GG}(b(x)-b(y))\EE_{V_{R,s}}^{k_{2}}\left(K(x,\cdot)\right)(y)\unit_{R}(x)f(y)dy\\
=&\sum_{R\in\D}\int_{\GG}|R|\EE_{V_{R,s}}^{k_{2}}\left(K(x,\cdot)\right)(y)(b(x)-b(y))
\frac{\unit_{R}(x)}{|R|^{\frac{1}{2}}} \frac{\unit_{V_{R,s}}(y)}{|R|^{\frac{1}{2}}}f(y)dy.
\end{align*}
So the $\L_{\QQ,\infty}$ estimate of this term is similar to $F_{1,s,R}$.
In a sum, we have
\begin{align}\label{F-2-s-R}
\Norm{\sum_{R\in\D}[M_{b},F_{2,s,R}]}_{\L_{\Q,\infty}}
\lesssim\Norm{\set{\osc_{\gamma}(b,R)}_{R\in\D}}_{\ell^{\Q,\infty}}
\end{align}
where $F_{2,s,R}$ represents the integral operator with $F_{2,s,R}(x,y)$ as its integral kernel.
By a similar estimate of $F_{2,s,R}$ for $F_{3,s,R}$, we have
\begin{align}\label{F-3-s-R}
\Norm{\sum_{R\in\D}[M_{b},F_{3,s,R}]}_{\L_{\Q,\infty}}
\lesssim\Norm{\set{\osc_{\gamma}(b,R)}_{R\in\D}}_{\ell^{\Q,\infty}},
\end{align}
where $F_{3,s,R}$ represents the integral operator with $F_{3,s,R}(x,y)$ as its integral kernel.

For $F_{4,s,R}$, using \eqref{osc-gamma} again, we denote
\begin{align*}
H_{R,Q,j}(x)=\frac{b(x)-b_{R}}{\osc_{\gamma}(b,R)}\left(\frac{|Q|}{|R|}\right)^{\frac{1}{2}}h_{Q,k_{1}}^{j}(x)\unit_{R}(x),
\quad
\wh{H}_{R,\wh{Q},\wh{j},s}(y)=\left(\frac{|\wh{Q}|}{|R|}\right)^{\frac{1}{2}}h_{\wh{Q},k_{2}}^{\wh{j}}(y)\unit_{V_{R,s}}(y),
\end{align*}
and
\begin{align*}
G_{R,Q,j}(x)=\left(\frac{|Q|}{|R|}\right)^{\frac{1}{2}}h_{Q,k_{1}}^{j}(x)\unit_{R}(x),
\quad
\wh{G}_{R,\wh{Q},\wh{j},s}(y)=\frac{b_{R}-b(y)}{\osc_{\gamma}(b,R)}
\left(\frac{|\wh{Q}|}{|R|}\right)^{\frac{1}{2}}h_{\wh{Q},k_{2}}^{\wh{j}}(y)\unit_{V_{R,s}}(y).
\end{align*}
It is obvious that $\set{\wh{H}_{R,\wh{Q},\wh{j},s}}_{R\in\D}$ and $\set{G_{R,Q,j}}_{R\in\D}$ are NWO sequences. Simple computations yield that
\begin{align*}
\supp(H_{R,Q,j})\subset R,\quad
\Norm{H_{R,Q,j}}_{\lp{3}}\lesssim|R|^{-\frac{1}{6}}
\end{align*}
and
\begin{align*}
\supp(\wh{G}_{R,\wh{Q},\wh{j},s})\subset V_{R,s},\quad
\Norm{\wh{G}_{R,\wh{Q},\wh{j},s}}_{\lp{3}}\lesssim|V_{R,s}|^{-\frac{1}{6}}.
\end{align*}
By \Cref{NWO}, $\set{H_{R,Q,j}}_{R\in\D}$ and $\set{\wh{G}_{R,\wh{Q},\wh{j},s}}_{R\in\D}$ are NWO sequences for index 3.  
Writing $b(x)-b(y)$ as $(b(x)-b_{R})+(b_{R}-b(y))$, we deduce
\begin{equation}\label{convergent-decompose}
\begin{split}
&\sum_{R\in\D}\int_{\GG}(b(x)-b(y))F_{4,s,R}(x,y)f(y)dy\\
=&\sum_{j\in\Gamma_{\Q}^{k_{1}},\wh{j}\in\Gamma_{\Q}^{k_{2}}}\sum_{R\in\D}\osc_{\gamma}(b,R) \sum_{\substack{Q\in\D\\Q\subset R}}
\sum_{\substack{\tilde Q\in\D\\\tilde{Q}\subset V_{R,s}}}
\lambda^{R}_{Q,j,\tilde Q,\tilde j}\left(\frac{|R|}{|Q|}\right)^{\frac{1}{2}} \left(\frac{|R|}{|\wh{Q}|}\right)^{\frac{1}{2}} H_{R,Q,j}(x) \inner{f,\wh{H}_{R,\wh{Q},\wh{j},s}}\\
+&\sum_{j\in\Gamma_{\Q}^{k_{1}},\wh{j}\in\Gamma_{\Q}^{k_{2}}}\sum_{R\in\D}\osc_{\gamma}(b,R) \sum_{\substack{Q\in\D\\Q\subset R}}
\sum_{\substack{\tilde Q\in\D\\\tilde{Q}\subset V_{R,s}}}
\lambda^{R}_{Q,j,\tilde Q,\tilde j}\left(\frac{|R|}{|Q|}\right)^{\frac{1}{2}} \left(\frac{|R|}{|\wh{Q}|}\right)^{\frac{1}{2}} G_{R,Q,j}(x) \inner{f,\wh{G}_{R,\wh{Q},\wh{j},s}}.
\end{split}
\end{equation}
To analyse the decay in the last two summations on the right hand side,  we only have to consider the factor in the first one, since the two summations have the same decay rate. Let $R$ be in the $l_{R}$-th dyadic cube system. Assume that $Q\in\D_{l_{1}}$ and $\tilde{Q}\in\D_{l_{2}}$ with $l_{1},l_{2}\geq l_{R}$ satisfying that $Q\subset R$ and $\tilde{Q}\subset V_{R,s}$. 
Applying the left Taylor expansion formula \cite[Chapter 20.3.3]{BLU2007} (or \cite[(1.44) Corollary]{FS1982}) to $K(\xi,\eta)$ with respect to $\xi$ at point $c_{Q}$ and then to $\eta$ at point $c_{\wh{Q}}$ yields
\begin{align*}
K(\xi,\eta)
&=P_{Q}^{(k_{1})}(\xi,\eta)+P_{Q}^{(k_{1},k_{2})}(\xi,\eta)+R_{Q}^{(k_{1},k_{2})}(\xi,\eta),
\end{align*}
where $P_{Q}^{(k_{1})}(\xi,\eta)$ is the product of a polynomial of $\xi$ of homogeneous degree not greater than $k_{1}-1$ and a continuous function of $\eta$, and $P_{Q}^{(k_{1},k_{2})}(\xi,\eta)$ is the product of a polynomial of $\xi$ (of homogeneous degree equal $k_{1}$) and a polynomial of $\eta$ (of homogeneous degree not greater than $k_{2}-1$), and the remainder $R_{Q}^{(k_{1},k_{2})}(\xi,\eta)$ satisfies the estimate
\begin{align*}
|R_{Q}^{(k_{1},k_{2})}(\xi,\eta)|\lesssim
\frac{1}{\rho(c_{Q},c_{\wh{Q}})^{\Q+k_{1}+k_{2}}}\quad\mbox{for}\quad \xi\in Q,\eta\in\wh{Q}.
\end{align*}
Therefore, by \eqref{moment conditions} and \eqref{Haar size conditions} in Theorem \ref{Alpert-bases}, we have
\begin{align*}
\lambda^{R}_{Q,j,\tilde Q,\tilde j}
&=\int_{Q}\int_{\wh{Q}}\left(K(\xi,\eta)-P_{Q}^{(k_{1})}(\xi,\eta)-P_{Q}^{(k_{1},k_{2})}(\xi,\eta)\right)h_{Q,k_{1}}^{j}(\xi)h_{\wh{Q},k_{2}}^{\wh{j}}(\eta)d\xi d\eta\\
&=\int_{Q}\int_{\wh{Q}}R_{Q}^{(k_{1},k_{2})}(\xi,\eta)h_{Q,k_{1}}^{j}(\xi)h_{\wh{Q},k_{2}}^{\wh{j}}(\eta)d\xi d\eta\\
&\lesssim\int_{Q}\int_{\wh{Q}}\frac{\rho(\xi,c_{Q})^{k_{1}}\rho(\eta,c_{\wh{Q}})^{k_{2}}}{\rho(c_{Q},c_{\wh{Q}})^{\Q+k_{1}+k_{2}}}
|Q|^{-\frac{1}{2}}|\wh{Q}|^{-\frac{1}{2}}d\xi d\eta\\
&\lesssim\frac{2^{-l_{1}k_{1}}2^{-l_{2}k_{2}}|Q|^{\frac{1}{2}}|\wh{Q}|^{\frac{1}{2}}}{2^{-l_{R}(\Q+k_{1}+k_{2})}}.
\end{align*}
Hence,
\begin{align}\label{convergent factor lambda}
\lambda^{R}_{Q,j,\tilde Q,\tilde j}\left(\frac{|R|}{|Q|}\right)^{\frac{1}{2}} \left(\frac{|R|}{|\wh{Q}|}\right)^{\frac{1}{2}}
\lesssim2^{(l_{R}-l_{1})k_{1}}2^{(l_{R}-l_{2})k_{2}}.
\end{align}
Write
\begin{equation}\label{convergent-factor}
\begin{split}
&\sum_{\substack{Q\in\D\\Q\subset R}}\sum_{\substack{\tilde Q\in\D\\\tilde{Q}\subset V_{R,s}}}
\lambda^{R}_{Q,j,\tilde Q,\tilde j}\left(\frac{|R|}{|Q|}\right)^{\frac{1}{2}} \left(\frac{|R|}{|\wh{Q}|}\right)^{\frac{1}{2}} H_{R,Q,j}(x)\wh{H}_{R,\wh{Q},\wh{j},s}(y)\\
&=\sum_{l_{1},l_{2}=l_{R}}^{+\infty}\frac{1}{2^{(l_{1}-l_{R})\Q}}\sum_{\substack{Q\in\D_{l_{1}}\\Q\subset R}}\frac{1}{2^{(l_{2}-l_{R})\Q}}\sum_{\substack{\tilde Q\in\D_{l_{2}}\\\tilde{Q}\subset V_{R,s}}}
2^{(l_{1}-l_{R})\Q}2^{(l_{2}-l_{R})\Q}\lambda^{R}_{Q,j,\tilde Q,\tilde j}\left(\frac{|R|}{|Q|}\right)^{\frac{1}{2}} \left(\frac{|R|}{|\wh{Q}|}\right)^{\frac{1}{2}} H_{R,Q,j}(x)\wh{H}_{R,\wh{Q},\wh{j},s}(y).
\end{split}
\end{equation}
Then \eqref{convergent factor lambda} guarantees
\begin{align*}
2^{(l_{1}-l_{R})\Q}2^{(l_{2}-l_{R})\Q}\lambda^{R}_{Q,j,\tilde Q,\tilde j}\left(\frac{|R|}{|Q|}\right)^{\frac{1}{2}} \left(\frac{|R|}{|\wh{Q}|}\right)^{\frac{1}{2}}
\lesssim2^{(l_{R}-l_{1})(k_{1}-\QQ)}2^{(l_{R}-l_{2})(k_{2}-\QQ)}.
\end{align*}
This provides a convergent factor for $\displaystyle\sum_{l_{1},l_{2}=l_{R}}^{+\infty}$ with $k_{1},k_{2}>\Q$.
Combining \eqref{convergent-decompose}, \eqref{convergent-factor} and  \Cref{compact-upper-bd-sequence}, we obtain
\begin{align}\label{F-4-s-R}
\Norm{\sum_{R\in\D}[M_{b},F_{4,s,R}]}_{\L_{\Q,\infty}}
\lesssim\Norm{\set{\osc_{\gamma}(b,R)}_{R\in\D}}_{\ell^{\Q,\infty}},
\end{align}
where $F_{4,s,R}$ is the integral operator with $F_{4,s,R}(x,y)$ as its integral kernel.

Lastly, since
\begin{align*}
[M_{b},T]f(x)
&=\sum_{R\in\D}\sum_{s=1}^{N_{\P}}\int_{\GG}(b(x)-b(y))K(x,y)\unit_{R\times V_{R,s}}(x,y)f(y)dy\\
&=\sum_{s=1}^{N_{\P}}\sum_{R\in\D}\int_{\GG}(b(x)-b(y))\left(F_{1,s,R}(x,y)+F_{2,s,R}(x,y)+F_{3,s,R}(x,y)+F_{4,s,R}(x,y) \right)f(y)dy,
\end{align*}
it follows from estimates \eqref{F-1-s-R}, \eqref{F-2-s-R}, \eqref{F-3-s-R} and \eqref{F-4-s-R} that
\begin{align*}
\Norm{[M_{b},T]}_{\L_{\Q,\infty}}
\lesssim\Norm{\set{\osc_{\gamma}(b,R)}_{R\in\D}}_{\ell^{\Q,\infty}}.
\end{align*}
Then \Cref{sobolev-osc} in the Appendix implies the desired result. 
The proof of sufficiency is completed.
\end{proof}

\subsection{Proof of necessity in \Cref{endponit-estimates}}\label{endponit-estimates-sect-ne}
In this part, we are going to show
\begin{align*}
\Norm{b}_{\swq}\lesssim \Norm{[M_{b},T]}_{\L_{\Q,\infty}}.
\end{align*}
Opposite to the previous part,  we will apply Alpert basis to locally expand $K\rp$ on the cubes using the non-degenerate conditions. 

\begin{proof}[Proof of necessity in \Cref{endponit-estimates}]
Let $\D$ be given as in Theorem \ref{Alpert-bases} and $\D_{k}$ be the corresponding k-th dyadic cube system.
By \Cref{cube-nondegenerate}, there are positive constants $A'_{4}\geq A'_{3}\geq2A_{0}$ such that, for each cube $R\in\D_{k}$ (with center $c_{R}$), one can find another cube $\wh{R}\in\D_{k}$ (with center $c_{\wh{R}}$) with  $A'_{3}2^{-k}\leq\rho(c_{R}\rp c_{\wh{R}})\leq A'_{4}2^{-k}$ satisfying that, $K(\wh{x},x)$ does not change sign and $|K(\wh{x},x)|\gtrsim|R|\rp$  for all $(\wh{x},x)\in \wh{R}\times R$.
Note also that by the size estimate of $K(\wh{x},x)$, we have $|K(\wh{x},x)|\lesssim|R|\rp$  for all $(\wh{x},x)\in \wh{R}\times R$.
Thus, we have $|K(\wh{x},x)|\approx|R|\rp$  for all $(\wh{x},x)\in \wh{R}\times R$, which gives
$$ |K(\wh{x},x)|^{-1}\approx|R|\qquad \forall (\wh{x},x)\in \wh{R}\times R. $$

Denote
\begin{align*}
J_{R}(\wh{x},x)=|R|^{-2}\unit_{\wh{R}}(\wh{x})K(\wh{x},x)\rp\unit_{R}(x).
\end{align*}
This allows us to write
\begin{align}\label{KJ-expansion}
K(\wh{x},x)J_{R}(\wh{x},x)
=\frac{\unit_{\wh{R}}(\wh{x})\unit_{R}(x)}{|\wh{R}|\;|R|}.
\end{align}
Let
\begin{align*}
\eta_{R}(y)=\sgn(b_{\wh{R}}-b(y))\unit_{R}(y)
\quad\mbox{and}\quad
L_{R}(f)(y)=\eta_{R}(y)\int_{\GG}J_{R}(w,y)f(w)dw.
\end{align*}
Simple computations yield that
\begin{align*}
[M_{b},T]L_{R}(f)(x)=
\int_{\wh{R}}\int_{R}(b(x)-b(y))\eta_{R}(y)K(x,y)J_{R}(w,y)dyf(w)dw.
\end{align*}
Therefore,
\begin{align*}
\Tr\Big([M_{b},T]L_{R}\Big)=
\int_{\wh{R}}\int_{R}(b(\wh{x})-b(x))\eta_{R}(x)K(\wh{x},x)J_{R}(\wh{x},x)dxd\wh{x}.
\end{align*}
Since $\displaystyle\fint_{\wh{R}}(b(\wh{x})-b_{\wh{R}})d\wh{x}=0$, it follows from \eqref{KJ-expansion} that
\begin{align*}
\bigg|\Tr\Big([M_{b},T]L_{R}\Big)\bigg|
=\bigg|\fint_{\wh{R}}\fint_{R}(b(\wh{x})-b(x))\eta_{R}(x)dxd\wh{x}\bigg|
=\fint_{R}|b_{\wh{R}}-b(x)|dx.
\end{align*}
And also,
\begin{equation}\label{sobolev-estimate}
\begin{split}
\fint_{R}|b(x)-b_{R}|dx
&\leq\fint_{R}|b(x)-b_{\wh{R}}|dx+|b_{R}-b_{\wh{R}}|\\
&\leq2\fint_{R}|b(x)-b_{\wh{R}}|dx\\
&\leq 2\bigg|\Tr\Big([M_{b},T]L_{R}\Big)\bigg|.
\end{split}
\end{equation}

We are reduced to estimating the $\ell^{\Q,\infty}$-norm of $\big|\Tr\Big([M_{b},T]L_{R}\Big)\big|$.
Recall the duality of Lorentz space in \cite[Theorem 1.4.16]{G2014-c} that $(\ell^{p,1})^{*}=\ell^{\frac{p}{p-1},\infty}$ for $1<p<\infty$. 
It follows that
\begin{equation}\label{duality-trace}
\begin{split}
\Norm{\set{\Tr\Big([M_{b},T]L_{R}\Big)}_{R\in\D}}_{\ell^{\Q,\infty}}
&=\sup_{\Norm{\{a_{R}\}_{R\in\D}}_{\ell^{\frac{\Q}{\Q-1},1}}\leq1}|
\Tr\Big(\sum_{R\in\D}[M_{b},T]L_{R}a_{R}\Big)|\\
&\leq\Norm{[M_{b},T]}_{\L_{\Q,\infty}}\sup_{\Norm{\{a_{R}\}_{R\in\D}}_{\ell^{\frac{\Q}{\Q-1},1}}\leq1}
\bigg\|{\sum_{R\in\D}L_{R}a_{R}}\bigg\|_{\L_{\frac{\Q}{\Q-1},1}} . 
\end{split}
\end{equation}
We now apply \eqref{Haar-expansion} to write
\begin{align*}
\unit_{\wh{R}}(\wh{x})K(\wh{x},x)\rp\unit_{R}(x)=
(\EE_{\wh{R}}^{k_{2}}+\HH_{\wh{R}}^{k_{2}})\otimes(\EE_{R}^{k_{1}}+\HH_{R}^{k_{1}})\left(K\rp(\unit_{\wh{R}}\otimes\unit_{R})\right)(\wh{x},x)
\end{align*}
in the sense of $L^{2}(\GG\times\GG)$ with $\Q<k_{1},k_{2}\leq \gamma_{T}$.
Thus,
\begin{align*}
&\sum_{R\in\D}a_{R}L_{R}(f)(x)\\
=&\sum_{R\in\D}a_{R}\frac{\eta_{R}(x)}{|R|^{2}}
\int_{\GG}\left(\EE_{\wh{R}}^{k_{2}}\otimes\EE_{R}^{k_{1}}+\EE_{\wh{R}}^{k_{2}}\otimes\HH_{R}^{k_{1}}
+\HH_{\wh{R}}^{k_{2}}\otimes\EE_{R}^{k_{1}}+\HH_{\wh{R}}^{k_{2}}\otimes\HH_{R}^{k_{1}}\right)
\left(K\rp(\unit_{\wh{R}}\otimes\unit_{R})\right)(\wh{x},x)f(\wh{x})d\wh{x}\\
=:&\;\Pi_{1}(f)(x)+\Pi_{2}(f)(x)+\Pi_{3}(f)(x)+\Pi_{4}(f)(x).
\end{align*}

For $\Pi_{1}(f)$, note that by \eqref{EQm}
\begin{align*}
\EE_{\wh{R}}^{k_{2}}\otimes\EE_{R}^{k_{1}}\left(K\rp(\unit_{\wh{R}}\otimes\unit_{R})\right)(\wh{x},x)
=\sum_{j_{2}=1}^{N_{d,k_{2}}}\sum_{j_{1}=1}^{N_{d,k_{1}}}
\inner{K\rp(\unit_{\wh{R}}\otimes\unit_{R}),\omega_{\wh{R},k_{2}}^{j_{2}}\otimes\omega_{R,k_{1}}^{j_{1}}}
\omega_{\wh{R},k_{2}}^{j_{2}}(\wh{x})\omega_{R,k_{1}}^{j_{1}}(x).
\end{align*}
 Let
\begin{align*}
\zeta_{1,R,j_{1},j_{2}}=\frac{1}{|R|^{2}}
\inner{K\rp(\unit_{\wh{R}}\otimes\unit_{R}),\omega_{\wh{R},k_{2}}^{j_{2}}\otimes\omega_{R,k_{1}}^{j_{1}}}.
\end{align*}
By \eqref{basis-EQm} and $|K\rp(\unit_{\wh{R}}\otimes\unit_{R})(x,y)|\lesssim|R|$, we have $|\zeta_{1,R,j_{1},j_{2}}|\lesssim1$.
Write $\Pi_{1}$ as the following form
\begin{align*}
\Pi_{1}(f)(x)=
\sum_{j_{2}=1}^{N_{d,k_{2}}}\sum_{j_{1}=1}^{N_{d,k_{1}}}\sum_{R\in\D}a_{R}\zeta_{1,R,j_{1},j_{2}} \omega_{R,k_{1}}^{j_{1}}(x)\eta_{R}(x)
\int_{\GG}\omega_{\wh{R},k_{2}}^{j_{2}}(\wh{x}) f(\wh{x})d\wh{x}.
\end{align*}
Clearly, $\set{\omega_{R,k_{1}}^{j_{1}}\eta_{R}}_{R\in\D}$ and $\set{\omega_{\wh{R},k_{2}}^{j_{2}}}_{R\in\D}$ are NWO sequences due to \eqref{basis-EQm}. 
By \Cref{compact-upper-bd}, we obtain
\begin{align}\label{Gamma-1}
\Norm{\Pi_{1}}_{\L_{\frac{\Q}{\Q-1},1}}
\lesssim \sum_{j_{2}=1}^{N_{d,k_{2}}}\sum_{j_{1}=1}^{N_{d,k_{1}}}
\Norm{\{a_{R}\zeta_{1,R,j_{1},j_{2}}\}_{R\in\D}}_{\ell^{\frac{\Q}{\Q-1},1}}
\lesssim \Norm{\{a_{R}\}_{R\in\D}}_{\ell^{\frac{\Q}{\Q-1},1}}.
\end{align}

For $\Pi_{2}(f)$, write
\begin{align*}
\EE_{\wh{R}}^{k_{2}}\otimes\HH_{R}^{k_{1}}\Big(K\rp(\unit_{\wh{R}}\otimes\unit_{R})\Big)(\wh{x},x)
&=\sum_{\substack{Q\in\D\\Q\subset R}}\sum_{j\in\Gamma_{\Q}^{k_{1}}}\int_{Q}\EE_{\wh{R}}^{k_{2}}(K(\cdot,\xi)\rp)(\wh{x}) h_{Q,k_{1}}^{j}(\xi)d\xi h_{Q,k_{1}}^{j}(x) \unit_{\wh{R}}(\wh{x})\\
&=\EE_{\wh{R}}^{k_{2}}(K(\cdot,x)\rp)(\wh{x})\unit_{R}(x)-
\EE_{\wh{R}}^{k_{2}}\otimes\EE_{R}^{k_{1}}\left(K\rp(\unit_{\wh{R}}\otimes\unit_{R})\right)(\wh{x},x),
\end{align*}
where we have used \eqref{Haar-expansion} in the second equality. Therefore,
\begin{align*}
\Pi_{2}(f)(x)=\sum_{R\in\D}a_{R}\frac{\eta_{R}(x)}{|R|^{2}}
\int_{\GG}\EE_{\wh{R}}^{k_{2}}(K(\cdot,x)\rp)(\wh{x})\unit_{R}(x) f(\wh{x})d\wh{x}-\Pi_{1}(f)(x).
\end{align*}
The estimate of $\Pi_{1}$ is done, so it remains to deal with the summation on the right hand side.
By \eqref{EQm}, we have
\begin{align*}
\EE_{\wh{R}}^{k_{2}}(K(\cdot,x)\rp)(\wh{x})\unit_{R}(x)
=\sum_{j_{2}=1}^{N_{d,m}}\int_{\wh{R}}K(\xi,x)\rp\omega_{\wh{R},k_{2}}^{j_{2}}(\xi) d\xi\omega_{\wh{R},k_{2}}^{j_{2}}(\wh{x})\unit_{R}(x).
\end{align*}
Denote
\begin{align*}
\zeta_{2,R,j_{2}}(x)=|R|^{-\frac{3}{2}}\int_{\wh{R}}K(\xi,x)\rp\omega_{\wh{R},k_{2}}^{j_{2}}(\xi) d\xi\unit_{R}(x).
\end{align*}
Then $\Pi_{2}(f)$ is reorganised as
\begin{align*}
\Pi_{2}(f)(x)=
\sum_{j_{2}=1}^{N_{d,m}}\sum_{R\in\D}a_{R}\frac{\zeta_{2,R,j_{2}}(x)\eta_{R}(x)}{|R|^{\frac{1}{2}}}
\int_{\GG}\omega_{\wh{R},k_{2}}^{j_{2}}(\wh{x}) f(\wh{x})d\wh{x}-\Pi_{1}(f)(x).
\end{align*}
By \eqref{basis-EQm} and $|K\rp(\unit_{\wh{R}}\otimes\unit_{R})(x,y)|\lesssim|R|$, we obtain $|\zeta_{2,R,j_{2}}(x)|\lesssim1$.
Moreover, $\set{\frac{\zeta_{2,R,j_{2}}\eta_{R}}{|R|^{\frac{1}{2}}}}_{R\in\D}$ and $\set{\omega_{\wh{R},k_{2}}^{j_{2}}}_{R\in\D}$ are NWO sequences.
It follows from \Cref{compact-upper-bd} and inequality \eqref{Gamma-1} that
\begin{align}\label{Gamma-2}
\Norm{\Pi_{2}}_{\L_{\frac{\Q}{\Q-1},1}}
\lesssim \sum_{j_{2}=1}^{N_{d,m}}\Norm{\{a_{R}\}_{R\in\D}}_{\ell^{\frac{\Q}{\Q-1},1}}
+\Norm{\Pi_{1}}_{\L_{\frac{\Q}{\Q-1},1}}
\lesssim\Norm{\{a_{R}\}_{R\in\D}}_{\ell^{\frac{\Q}{\Q-1},1}}.
\end{align}   
Observe that
\begin{align*}
\HH_{\wh{R}}^{k_{2}}\otimes\EE_{R}^{k_{1}}\Big(K\rp(\unit_{\wh{R}}\otimes\unit_{R})\Big)(\wh{x},x)
=\sum_{\substack{\wh{Q}\in\D\\\wh{Q}\subset \wh{R}}}\sum_{\wh{j}\in\Gamma_{\Q}^{k_{2}}}\int_{\wh{Q}}\EE_{R}^{k_{1}}(K(\xi,\cdot)\rp)(x) h_{\wh{Q},k_{2}}^{\wh{j}}(\xi)d\xi~ h_{\wh{Q},k_{2}}^{\wh{j}}(\wh{x}).
\end{align*}
By a similar estimate of $\Pi_{2}$ for $\Pi_{3}$, we have
\begin{align}\label{Gamma-3}
\Norm{\Pi_{3}}_{\L_{\frac{\Q}{\Q-1},1}}
\lesssim \Norm{\{a_{R}\}_{R\in\D}}_{\ell^{\frac{\Q}{\Q-1},1}}.
\end{align}

For $\Pi_{4}(f)$, denote
\begin{align*}
\Psi_{R,Q,j}(x)=\left(\frac{|Q|}{|R|}\right)^{\frac{1}{2}}\eta_{R}(x) h_{Q,k_{1}}^{j}(x)
\quad\mbox{and}\quad
\wh{\Psi}_{R,\wh{Q},\wh{j}}(y)=\left(\frac{|Q|}{|R|}\right)^{\frac{1}{2}}h_{\wh{Q},k_{2}}^{\tilde j}(y)\unit_{\wh{R}}(y).
\end{align*}
Clearly, $\set{\Psi_{R,Q,j}}_{R\in\D}$ and $\set{\wh{\Psi}_{R,\wh{Q},\wh{j}}}_{R\in\D}$ are NWO sequences.
Note that
\begin{align*}
\HH_{\wh{R}}^{k_{2}}\otimes\HH_{R}^{k_{1}}\Big(K\rp(\unit_{\wh{R}}\otimes\unit_{R})\Big)(\wh{x},x)=
\sum_{\substack{Q\in\D\\Q\subset R}}\sum_{j\in\Gamma_{\Q}^{k_{1}}}\sum_{\substack{\tilde Q\in\D\\\wh{Q}\subset\wh{R}}}\sum_{\wh{j}\in\Gamma_{\Q}^{k_{2}}} C_{Q,j,\tilde Q,\tilde j}h_{\wh{Q},k_{2}}^{\tilde j}(\tilde x) h_{Q,k_{1}}^{j}(x),
\end{align*}
where
$$C_{Q,j,\tilde Q,\tilde j}= \int_{\GG}\int_{\GG}\unit_{\wh{R}}(\wh{x})K(\wh{x},x)\rp\unit_{R}(x) h_{\wh{Q},k_{2}}^{\tilde j}(\tilde x) h_{Q,k_{1}}^{j}(x) dxd\tilde x.$$ 
For $R\in\D$, let $R$ be in the $l_{R}$-th dyadic cube system. Then we have
\begin{align}\label{Gmma4-expansion}
\nonumber\Pi_{4}(f)(x)
&=\sum_{j\in\Gamma_{\Q}^{k_{1}},\wh{j}\in\Gamma_{\Q}^{k_{2}}}\sum_{R\in\D}
\sum_{\substack{Q\in\D\\Q\subset R}}\sum_{\substack{\tilde Q\in\D\\\wh{Q}\subset\wh{R}}} 
a_{R} \frac{C_{Q,j,\wh{Q},\wh{j}}}{|R|^{2}} \eta_{R}(x) h_{Q,k_{1}}^{j}(x)\inner{f,h_{\wh{Q},k_{2}}^{\tilde j}}\\
\nonumber&=\sum_{j\in\Gamma_{\Q}^{k_{1}},\wh{j}\in\Gamma_{\Q}^{k_{2}}}\sum_{R\in\D}
\sum_{\substack{Q\in\D\\Q\subset R}}\sum_{\substack{\tilde Q\in\D\\\wh{Q}\subset\wh{R}}} 
a_{R} \frac{C_{Q,j,\wh{Q},\wh{j}}}{|R||Q|^{\frac{1}{2}}|\wh{Q}|^{\frac{1}{2}}} \Psi_{R,Q,j}(x)\inner{f,\wh{\Psi}_{R,\wh{Q},\wh{j}}}\\
&=\sum_{j\in\Gamma_{\Q}^{k_{1}},\wh{j}\in\Gamma_{\Q}^{k_{2}}}\sum_{R\in\D}a_{R}
\sum_{l_{1},l_{2}=l_{R}}^{+\infty}\frac{1}{2^{(l_{1}-l_{R})\Q}}\sum_{\substack{Q\in\D_{l_{1}}\\Q\subset R}}
\frac{1}{2^{(l_{1}-l_{R})\Q}}\sum_{\substack{\tilde Q\in\D_{l_{2}}\\\wh{Q}\subset\wh{R}}} 
\frac{2^{(l_{1}-l_{R})\Q}2^{(l_{2}-l_{R})\Q}C_{Q,j,\wh{Q},\wh{j}}}{|R||Q|^{\frac{1}{2}}|\wh{Q}|^{\frac{1}{2}}} \Psi_{R,Q,j}(x)\inner{f,\wh{\Psi}_{R,\wh{Q},\wh{j}}}.
\end{align}
In order to apply \Cref{compact-upper-bd-sequence}, let us investigate the decay of the main terms in \eqref{Gmma4-expansion}. 
Applying the left Taylor expansion formula \cite[Chapter 20.3.3]{BLU2007} (or \cite[(1.44) Corollary]{FS1982}) to $K(\wh{x},x)\rp$ with respect to $\wh{x}$ at point $c_{\wh{Q}}$ and then to $x$ at point $c_{Q}$ yields
\begin{align*}
K(\wh{x},x)\rp
&=P_{Q}^{(k_{2})}(\wh{x},x)+P_{Q}^{(k_{2},k_{1})}(\wh{x},x)+R_{Q}^{(k_{2},k_{1})}(\wh{x},x),
\end{align*}
where $P_{Q}^{(k_{2})}(\wh{x},x)$ is the product of a polynomial of $\wh{x}$ of homogeneous degree not greater than $k_{2}-1$ and a continuous function of $x$, and $P_{Q}^{(k_{2},k_{1})}(\wh{x},x)$ is the product of a polynomial of $\wh{x}$ (of homogeneous degree equal $k_{2}$) and a polynomial of $x$ (of homogeneous degree not greater than $k_{1}-1$), and the remainder $R_{Q}^{(k_{2},k_{1})}(\wh{x},x)$ satisfies the estimate
\begin{align*}
|R_{Q}^{(k_{2},k_{1})}(\wh{x},x)|\lesssim
\frac{1}{\rho(c_{Q},c_{\wh{Q}})^{-\Q+k_{1}+k_{2}}}\quad\mbox{for}\quad \wh{x}\in\wh{Q},x\in Q.
\end{align*}
Therefore, by \eqref{moment conditions} and \eqref{Haar size conditions} in Theorem \ref{Alpert-bases}, we have
\begin{align*}
C_{Q,j,\tilde Q,\tilde j}
&=\int_{\wh{Q}}\int_{Q}\left(K(\wh{x},x)\rp-P_{Q}^{(k_{2})}(\wh{x},x)-P_{Q}^{(k_{2},k_{1})}(\wh{x},x)\right)
h_{\wh{Q},k_{2}}^{\wh{j}}(\wh{x})h_{Q,k_{1}}^{j}(x)dx d\wh{x}\\
&=\int_{\wh{Q}}\int_{Q}R_{Q}^{(k_{2},k_{1})}(\wh{x},x)
h_{\wh{Q},k_{2}}^{\wh{j}}(\wh{x})h_{Q,k_{1}}^{j}(x)dx d\wh{x}\\
&\lesssim\int_{\wh{Q}}\int_{Q}\frac{\rho(\wh{x},c_{\wh{Q}})^{k_{2}}\rho(x,c_{Q})^{k_{1}}}{\rho(c_{Q},c_{\wh{Q}})^{-\Q+k_{1}+k_{2}}}
|Q|^{-\frac{1}{2}}|\wh{Q}|^{-\frac{1}{2}}dx d\wh{x}\\
&\lesssim\frac{2^{-l_{1}k_{1}}2^{-l_{2}k_{2}}|Q|^{\frac{1}{2}}|\wh{Q}|^{\frac{1}{2}}}{2^{-l_{R}(-\Q+k_{1}+k_{2})}}.
\end{align*}
Moreover,
\begin{align*}
\frac{2^{(l_{1}-l_{R})\Q}2^{(l_{2}-l_{R})\Q}C_{Q,j,\wh{Q},\wh{j}}}{|R||Q|^{\frac{1}{2}}|\wh{Q}|^{\frac{1}{2}}}
\lesssim2^{(l_{R}-l_{1})(k_{1}-\Q)}2^{(l_{R}-l_{2})(k_{2}-\Q)}.
\end{align*}
This provides a convergent factor for $\displaystyle\sum_{l_{1},l_{2}=l_{R}}^{+\infty}$ with $k_{1},k_{2}>\Q$.
Thus, by \eqref{Gmma4-expansion} and \Cref{compact-upper-bd-sequence}, we have
\begin{align}\label{Gamma-4}
\Norm{\Pi_{4}}_{\L_{\frac{\Q}{\Q-1},1}}
\lesssim \Norm{\{a_{R}\}_{R\in\D}}_{\ell^{\frac{\Q}{\Q-1},1}}.
\end{align}

Lastly, combining dual inequality \eqref{duality-trace}, and estimates \eqref{Gamma-1}, \eqref{Gamma-2}, \eqref{Gamma-3} and \eqref{Gamma-4}, we obtain
\begin{align*}
\Norm{\set{\Tr\Big([M_{b},T]L_{R}\Big)}_{R\in\D}}_{\ell^{\Q,\infty}}
&\leq\Norm{[M_{b},T]}_{\L_{\Q,\infty}}\sup_{\Norm{\{a_{R}\}_{R\in\D}}_{\ell^{\frac{\Q}{\Q-1},1}}\leq1}
\Norm{\sum_{R\in\D}L_{R}a_{R}}_{\L_{\frac{\Q}{\Q-1},1}}\\
&\leq\Norm{[M_{b},T]}_{\L_{\Q,\infty}}\sup_{\Norm{\{a_{R}\}_{R\in\D}}_{\ell^{\frac{\Q}{\Q-1},1}}\leq1}
\sum_{i=1}^{4}\Norm{\Pi_{i}}_{\L_{\frac{\Q}{\Q-1},1}}\\
&\leq C\Norm{[M_{b},T]}_{\L_{\Q,\infty}}.
\end{align*}
By \Cref{sobolev-osc} and \eqref{sobolev-estimate}, we conclude that
\begin{align*}
\Norm{b}_{\swq}
&\leq c_{\GG}\rp\Norm{\set{\fint_{R}|b(x)-b_{R}|dx}_{R\in\D}}_{\ell^{\Q,\infty}}\\
&\leq c_{\GG}\rp\Norm{\set{\Tr\Big([M_{b},T]L_{R}\Big)}_{R\in\D}}_{\ell^{\Q,\infty}}\\
&\leq 2C c_{\GG}\rp \Norm{[M_{b},T]}_{\L_{\Q,\infty}}.
\end{align*}
This is the desired result and the proof of necessity is completed.
\end{proof}

\bigskip
\section{Applications to quaternionic Siegel upper half space and quaternionic  Heisenberg groups }\label{applications}

The theory of slice regular functions of one quaternionic variable has been studied intensively (cf. e.g. \cite{CDLWW,CDLWW2,CGSS} and references therein)
and applied successfully to the study of quaternionic  closed operators, quaternionic function spaces and operators on them,
e.g. quaternionic  slice   Hardy space, quaternionic de Branges space, quaternionic Hankel operator and so on. Meanwhile,
quaternionic  analysis of several variables has been developed substantially in the  last three decades.   The quaternionic  counterpart of the $\overline{\partial}$-complex  and the $k$-Cauchy--Fueter complex  is  known explicitly now.  Two fundamental Calder\'on--Zygmund operator in this setting is the Cauchy--Szeg\"o projection and the Riesz transforms on quaternionic  Heisenberg groups.

We now recall some necessary notation from \cite{CDLWW}.
Let $\mathbb H^{n }$ be the $n$-dimensional quaternion
space, which is the collection of $n$-tuples $(q_1, \ldots  , q_n), q_l  \in \mathbb{H}$. We write
$q_l = x_{4l-3} + x_{4l-2}\mathbf i + x_{4l-1}\mathbf j + x_{4l}\mathbf  k$, $l = 1, \ldots  , n$.
The quaternionic  Siegel upper half space  is
$\mathcal U :=\left\{q=(q_1,\ldots, q_n)=(q_1,q')\in\mathbb H^n\mid \operatorname{Re} q_1>|q'|^2\right\}$
, whose boundary
\begin{equation*}\label{eq:boundary}
 \partial \mathcal U :=\{(q_1,q')\in\mathbb H^n\mid\rho:= {\rm Re}\,q_1-|q'|^2=0\}
\end{equation*}
 is a quadratic hypersurface.
  A $C^1$-smooth function $f = f_1 +\mathbf{i }f_2 +\mathbf{j }f_3 +\mathbf{k} f_4 : \mathcal U\rightarrow \mathbb{ H} $ is
called {\it (left) regular} on $\mathcal U  $ if it satisfies the Cauchy--Fueter equations
$\overline{ \partial}_{q_l} f (q) = 0,~  l = 1, \ldots , n$, for any $q\in \mathcal U  $,
where
\begin{equation*}\label{eq:CF}
   \overline{\partial}_{q_l}:=\partial_{  x_{4l-3}} + \partial_{ x_{4l-2}} \mathbf i + \partial_{ x_{4l-1}} \mathbf j + \partial_{ x_{4l}} \mathbf  k.
\end{equation*}

 The Hardy space $ H^p(\mathcal U )$ consists of all regular functions $F$ on $\mathcal U $, for which
$$\|F\|_{  H^p(\mathcal U )}:=\left(\sup_{\varepsilon>0}\int_{\partial \mathcal U }|F_\varepsilon(q)|^pd\beta(q)\right)^{1\over p}<\infty,$$
 where $F_\varepsilon$ is for its ``vertical translate'', i.e. $F_\varepsilon (q) = F(q + \varepsilon \bf{e_1})$,
where $ {\bf e_1} = (1, 0, 0, \ldots  , 0)$.
 The Cauchy--Szeg\"{o} projection is the  operator from   $L^2(\partial\mathcal U )$ to  $ H^2(\mathcal U )$
  satisfying the following reproducing
formula:
\begin{equation*}\label{eq:Szego0}
    F(q) =
\int_{\partial\mathcal{U} }
S(q, p)F^b(p) d\beta (p),  \qquad q\in \mathcal{U} ,
 \end{equation*}whenever $F \in   H^2(\mathcal{U} )$ with the boundary value $F^b$  on $\partial \mathcal{U} $, where $S(q,p)$ is the Cauchy--Szeg\"{o} kernel:
\begin{equation*}\label{cauchy-szego}
S(q,p)=s\Big(q_1+\overline p_1-2\sum_{k=2}^n\overline p_k\Q_k\Big)
\end{equation*}
for $p= (p_1,\ldots, p_n)\in\mathcal U$, $q=(q_1,\ldots, q_n)\in\mathcal U$, and
\begin{equation*}\label{s}
s(\sigma)=c_{n-1}{\partial^{2(n-1)}\over \partial x_1^{2(n-1)}}{{\overline \sigma}\over |\sigma|^4},\quad
\sigma=x_1+x_2{\bf i}+x_3{\bf j}+x_4{\bf k}\in\mathbb H
\end{equation*}
with the real constant $c_{n-1}$ depending only on $n$  (\cite[{Theorem A}]{CMW}). The explicit formula and the size and regularity estimate of $S(q,p)$ has been exploited only very recently in \cite{CDLWW}.

The boundary $\partial \mathcal U $ can be identified with the quaternionic Heisenberg group
 $\mathscr H^{n-1}$, which is the  space $\mathbb R^{4n-1}$ equipped with the multiplication given by
\begin{equation*}\label{law}
({\bm{t}},{\bm{y}})\cdot({\bm{t}'}, {\bm{y}}')=\bigg(\bm t+\bm t'+B(\bm y, \bm y'), {\bm{y}}+{\bm{y}}' \bigg),
\end{equation*}
\color{black}
where
${\bm{t}}=(t_1, t_2, t_3)$, ${\bm{t}}'=(t'_1, t'_2, t'_3)\in\mathbb R^3$,  ${\bm{y}}=(y_1,y_2,\cdots, y_{4n-4})$, ${\bm{y}}'=(y'_1,y'_2,\cdots, y'_{4n-4})\in\mathbb R^{4n-4}$, $B(\bm y, \bm y')=(B_1(\bm y, \bm y'),B_2(\bm y, \bm y'),B_3(\bm y, \bm y'))$, and
$$B_\alpha(\bm y, \bm y')=2\sum_{l=0}^{n-2}\sum_{j,k=1}^4b_{kj}^\alpha y_{4l+k}y'_{4l+j},\quad \alpha=1,2,3$$
with
\begin{equation*}\label{eq:b}
b^1:=\left( \begin{array}{cccc}
0&1 & 0 & 0\\
-1 & 0 & 0&0\\
0 & 0 & 0&-1\\
0 & 0 &1&0
\end{array}
\right ),
\quad
b^2:=\left( \begin{array}{cccc}
0&0 &1 & 0\\
0 & 0 & 0&1\\
-1 & 0 & 0&0\\
0 &-1 & 0 &0
\end{array}
\right ),
\quad
b^3:=\left( \begin{array}{cccc}
0&0 & 0 &1\\
0 & 0 &-1&0\\
0 &1 & 0&0\\
-1 & 0 & 0 &0
\end{array}
\right ).
\end{equation*}
%

 The following $4n-1$ vector fields are left invariant on $\mathscr H^{n-1}$:
 \begin{align*}
 Y_{4l+j} &={\partial\over \partial y_{4l+j}} + 2 \sum_{\alpha=1}^3\sum_{k=1}^4 b_{kj}^\alpha y_{4l+k} {\partial\over\partial t_\alpha},\quad l=0,\ldots, n-2, ~j=1,\ldots, 4,\\
 T_\alpha&={\partial\over\partial t_\alpha},\quad \alpha=1,2,3.\nonumber
 \end{align*}
They form a basis for the Lie algebra of left-invariant vector field on $\mathscr H^{n-1}$.
The only nontrivial commutator relations are
\begin{equation*}\label{eq:Y-bracket}
   [Y_{4l+k}, Y_{4l'+j}]=4\delta_{ll'}\sum_{\alpha=1}^3b_{kj}^\alpha{\partial\over \partial t_\alpha}, \quad l, ~l'=0,\ldots ,n-2; ~j,~k=1,\ldots,  4.
\end{equation*}
 For convenience, we set $Y_{4n-4+\alpha}:=T_\alpha$,  $y_{4n-4+\alpha }:=t_\alpha$, $~\alpha=1,2,3$.  The standard sub-Laplacian on
 $\mathscr H^{n-1}$ is defined by $\triangle_H=\sum_{j=1}^{4n-4}Y_j^2$, and the Riesz transform is given by
 $$ R_j= Y_j\triangle_H^{-1/2},\qquad j=1,\ldots,4n-4.$$
 The properties of Riesz transform were studied in \cite{Zhu}.
 
 Based on our main result, we obtain the Schatten class estimates for both $[b,S]$ and $[b,R_j]$, which recovers the related result in \cite{CDLWW,CDLWW2}. 
 Moreover, our result gives the endpoint weak Schatten estimate at the critical index, which provides the missing theory in \cite{CDLWW2}.  

\section{Appendix: oscillatory characterisation of Sobolev space}\label{Appendix}

In this Appendix section, we give the oscillatory characterisation of Sobolev space on stratified Lie groups, which has been used as a key ingredient in \Cref{endponit-estimates-sect} for the proof of \Cref{endponit-estimates}. On the other hand, the oscillatory characterisation deduced here has its own interest, extending the main results by Frank \cite{F2022} for Euclidean spaces.

For $f\in L_{loc}(\GG)$, let
\begin{align*}
m_{f}(x,r)=\fint_{B(x,r)}|f(y)-f_{B(x,r)}|dy
\end{align*}
and let $\omega_{p}$ be the measure on $\GG\times\RR_{+}$ with
\begin{align*}
d\omega_{p}(x,r)=\frac{dxdr}{r^{p+1}}.
\end{align*}

\begin{lemma}\label{coordination-transform}
Let $v\in\RR^{n_{1}}$ be a non-zero vector. Then there is a positive constant $c_{\GG}$ such that
\begin{align*}
\int_{B(o,r)}|v\cdot y^{(1)}|dy\geq c_{\GG}|v|r^{\Q+1}.
\end{align*}
Here $v\cdot y^{(1)}$ denotes $\displaystyle\sum_{j=1}^{n_{1}}v_{j}y_{j}^{(1)}$.
\end{lemma}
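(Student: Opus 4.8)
The plan is to reduce the estimate on a general ball $B(o,r)$ to the model ball $B(o,1)$ by the homogeneous dilation, and then to prove the bound on $B(o,1)$ by a compactness/continuity argument. First I would use the dilation $\delta_r$: since the first-layer coordinates scale linearly, $v\cdot(\delta_r y)^{(1)}=r\,(v\cdot y^{(1)})$, and the Jacobian of $\delta_r$ is $r^{\QQ}$, so
\begin{align*}
\int_{B(o,r)}|v\cdot y^{(1)}|\,dy
=\int_{B(o,1)}|v\cdot(\delta_r z)^{(1)}|\,r^{\QQ}\,dz
=r^{\QQ+1}\int_{B(o,1)}|v\cdot z^{(1)}|\,dz.
\end{align*}
Here I use that $\delta_r$ maps $B(o,1)$ onto $B(o,r)$, which is immediate from homogeneity of $\rho$. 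Thus it suffices to show $\int_{B(o,1)}|v\cdot z^{(1)}|\,dz\ge c_{\GG}|v|$ for all $v\neq 0$, and by homogeneity in $v$ we may take $|v|=1$.

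Next I would define $\Phi(v)=\int_{B(o,1)}|v\cdot z^{(1)}|\,dz$ for $v$ on the unit sphere $S^{n_1-1}\subset\RR^{n_1}$. The function $\Phi$ is continuous in $v$ (the integrand is jointly continuous and dominated by $\sup_{z\in B(o,1)}\rho_\infty(z)<\infty$, so dominated convergence applies). It is strictly positive: if $\Phi(v)=0$ then $v\cdot z^{(1)}=0$ for a.e.\ $z\in B(o,1)$, hence for all $z$ in the open ball by continuity; but $B(o,1)$ is an open neighbourhood of the origin in $\RR^d$, so its projection onto the first layer $\RR^{n_1}$ contains an open neighbourhood of $0\in\RR^{n_1}$, forcing $v=0$, a contradiction. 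Since $S^{n_1-1}$ is compact and $\Phi$ is continuous and positive, $c_{\GG}:=\min_{|v|=1}\Phi(v)>0$, and then $\Phi(v)\ge c_{\GG}|v|$ for all $v$ by scaling. Combining with the dilation identity gives the claim.

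The only mildly delicate point — the main obstacle, such as it is — is verifying that the projection of $B(o,1)$ to the first layer has nonempty interior, i.e.\ that one genuinely gets a lower bound rather than a bound that degenerates. This follows because $B(o,1)$ contains a Euclidean ball around the origin: by \eqref{norm-inequality} and the definition of $\rho_\infty$, the condition $|z_j^{(k)}|<1$ for all $j,k$ is implied by $\rho_\infty(z)<1$ is implied by $\rho(z)<1$ up to the constant $C_\rho$, so $\{z:|z^{(1)}|<\varepsilon_0,\ z^{(k)}=0\text{ for }k\ge2\}\subset B(o,1)$ for some $\varepsilon_0>0$; restricting the integral to this slice already yields $\Phi(v)\ge\int_{|w|<\varepsilon_0}|v\cdot w|\,dw\cdot(\text{thickness in higher layers})>0$, which also gives an explicit constant if desired. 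Everything else is routine.
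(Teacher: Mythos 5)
Your proof is correct, but it takes a genuinely different route from the paper. The paper restricts the integral to an explicit box contained in $B(o,r)$ (the positive first-layer orthant of the $\rho_\infty$-ball of radius $C_\rho^{-1}r$, after WLOG assuming $v_j>0$ for all $j$), exploits permutation symmetry to reduce to a single coordinate integral, and computes a fully explicit constant $c_\GG$. You instead first use the homogeneity of $\delta_r$ and the Jacobian $r^\QQ$ to reduce to $B(o,1)$ with the $r^{\QQ+1}$ factor, and then get positivity on the unit sphere in $v$ by continuity and compactness, invoking that the first-layer projection of $B(o,1)$ has nonempty interior (which, like the paper, you justify via the $\rho\leftrightarrow\rho_\infty$ comparison \eqref{norm-inequality}). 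Your approach is more conceptual and avoids the sign bookkeeping for $v$, but the constant is a priori non-explicit (though you note how to extract one); the paper's direct calculation hands you the constant immediately. One small wording slip: the ``slice'' $\{z:|z^{(1)}|<\varepsilon_0,\ z^{(k)}=0\ \text{for}\ k\ge2\}$ has measure zero, so the integral over it vanishes --- you clearly intend a thin box with $|z^{(k)}|<\varepsilon_k$ for $k\ge2$, which is exactly what the $\rho_\infty$-ball provides and which the ``thickness in higher layers'' factor refers to, so the idea is right but the set should be written as a product box rather than a slice.
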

\begin{proof}
By \eqref{norm-inequality},
\begin{align*}
\set{y\in\GG:\rho(y)<r} \supseteq  \set{y\in\GG:\rho_{\infty}(y)<C_{\rho}\rp r}.
\end{align*}
Let 
\begin{align*}
U_{pos}=
\set{\rho_{\infty}(y)<C_{\rho}\rp r:0\leq y_{1}^{(1)}<C_{\rho}\rp r;\ \ldots;\ 0\leq y_{n_{1}}^{(1)}<C_{\rho}\rp r}.
\end{align*}
Note that the area in first layer of $B(o,r)$ is symmetric.
If $v_{j}$ is negative for some $j$, we replace the $j$-th constraint condition in $U_{pos}$ with $-C_{\rho}\rp r< y_{j}^{(1)}\leq0$. 
Without loss of generality, we assume that $v_{j}$ is positive for all $j=1,2,\ldots,n_{1}$.
Therefore,
\begin{align*}
\int_{B(o,r)}|v\cdot y^{(1)}|dy
&\geq\int_{\set{y:\rho_{\infty}(y)<C_{\rho}\rp r}}|v\cdot y^{(1)}|dy\\
&\geq\int_{U_{pos}}|v\cdot y^{(1)}|dy\\
&\geq\sum_{j=1}^{n_{1}}v_{j}\int_{U_{pos}} y_{j}^{(1)}dy.
\end{align*}
Since $dy$ is the Lebesgue measure and the area of first layer is symmetric, it follows that the integral of $y_{j}^{(1)}$ on $U_{pos}$ are equal for all $j=1,2,\ldots,n_{1}$.
Then
\begin{align*}
\int_{B(o,r)}|v\cdot y^{(1)}|dy
&\geq\sum_{j=1}^{n_{1}}v_{j}\int_{U_{pos}} y_{1}^{(1)}dy_{1}^{(1)}\cdots dy_{n_{1}}^{(1)}dy_{rest}\\
&\geq 2^{d-n_{1}-1}C_{\rho}^{-\Q-1}|v|r^{\Q+1}
\end{align*}
where we decompose $dy=dy_{1}^{(1)}\cdots dy_{n_{1}}^{(1)}dy_{rest}$.
Letting $c_{\GG}=2^{d-n_{1}}C_{\rho}^{-\Q-1}$ gives the desired result.
\end{proof}

\begin{lemma}\label{rearrangement-liminf}
Let $f\in C^{1}(\GG)$ and $0<p<\infty$.
If $\nabla f$ is Lipschitz and compactly supported, then there is a positive constant $c_{\GG,p}$ such that
\begin{align*}
\liminf_{\delta\rightarrow0^{+}}\delta^{p}\omega_{p}\Big(\{m_{f}>\delta\}\cap(\Omega\times\RR_{+})\Big)
\geq c_{\GG,p}\int_{\Omega}|\nabla f(x)|^{p}dx.
\end{align*}
\end{lemma}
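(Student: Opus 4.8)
The plan is to reduce the statement to a \emph{pointwise} lower bound on the mean oscillation, namely
\[
\liminf_{r\to 0^{+}}\frac{m_{f}(x,r)}{r}\;\geq\; c_{\GG}'\,|\nabla f(x)|\qquad\text{for every }x\in\GG,
\]
with $c_{\GG}'=c_{\GG}/|B(o,1)|$ and $c_{\GG}$ the constant from \Cref{coordination-transform}, and then to finish by Fatou's lemma. Indeed, for a fixed $x$ with $|\nabla f(x)|>0$ the pointwise bound produces, for each $\lambda'<c_{\GG}'|\nabla f(x)|$, some $r_{0}>0$ with $m_{f}(x,r)\geq\lambda' r$ on $(0,r_{0})$; hence $\{r>0:m_{f}(x,r)>\delta\}\supseteq(\delta/\lambda',r_{0})$ as soon as $\delta<\lambda' r_{0}$, so that
\[
\delta^{p}\int_{0}^{\infty}\unit_{\{m_{f}(x,r)>\delta\}}\,\frac{dr}{r^{p+1}}\;\geq\;\frac{\delta^{p}}{p}\Big[(\lambda'/\delta)^{p}-r_{0}^{-p}\Big]\;\xrightarrow[\delta\to0^{+}]{}\;\frac{(\lambda')^{p}}{p}.
\]
Letting $\lambda'\uparrow c_{\GG}'|\nabla f(x)|$, then writing $\omega_{p}\big(\{m_{f}>\delta\}\cap(\Omega\times\RR_{+})\big)=\int_{\Omega}\int_{0}^{\infty}\unit_{\{m_{f}(x,r)>\delta\}}\frac{dr}{r^{p+1}}\,dx$ (Tonelli, legitimate since $f\in C^{1}$ makes $(x,r)\mapsto m_{f}(x,r)$ continuous) and applying Fatou's lemma gives the lemma with $c_{\GG,p}=(c_{\GG}')^{p}/p$; note $\int_{\Omega}|\nabla f|^{p}<\infty$ because $\nabla f$ is bounded with compact support, and there is nothing to prove where $\nabla f(x)=0$.

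For the pointwise bound, fix $x$ and $r>0$ and use $B(x,r)=x\cdot B(o,r)$; substituting $y=x\cdot w$ converts the average over $B(x,r)$ into one over $B(o,r)$, with $(x\rp y)^{(1)}=w^{(1)}$. By the Taylor formula on $\GG$ (\cite[Prop.~20.3.11 and 20.3.14]{BLU2007}, cf.\ \eqref{Taylor-formula}) one writes $f(x\cdot w)=f(x)+\nabla f(x)\cdot w^{(1)}+\Omega(f,w)$. Since the homogeneous norms $\rho,\rho_{\infty}$ in \eqref{norm-inequality} are invariant under sign changes of each coordinate, $B(o,r)$ is symmetric in that sense, hence $\fint_{B(o,r)}w^{(1)}_{j}\,dw=0$ for every $j$; consequently $f_{B(x,r)}=f(x)+\overline{\Omega}$ with $\overline{\Omega}:=\fint_{B(o,r)}\Omega(f,u)\,du$, so that
\begin{align*}
m_{f}(x,r)
&=\fint_{B(o,r)}\big|\nabla f(x)\cdot w^{(1)}+\Omega(f,w)-\overline{\Omega}\big|\,dw\\
&\geq\fint_{B(o,r)}|\nabla f(x)\cdot w^{(1)}|\,dw-2\fint_{B(o,r)}|\Omega(f,w)|\,dw.
\end{align*}
Now \Cref{coordination-transform}, together with $|B(o,r)|=r^{\Q}|B(o,1)|$, yields $\fint_{B(o,r)}|\nabla f(x)\cdot w^{(1)}|\,dw\geq c_{\GG}'|\nabla f(x)|\,r$, while the remainder is lower order: using the integral-remainder form of the Taylor expansion one has $|\Omega(f,w)|\lesssim\sum_{2\le j\le\tau}\sum_{k}|X^{(j)}_{k}f(x)|\,\rho(w)^{j}$ plus a term bounded by (the Lipschitz constant of $\nabla f$) times $\rho(w)^{2}$; since $\rho(w)<r$ and each $X^{(j)}_{k}f$ is continuous with compact support, $\fint_{B(o,r)}|\Omega(f,w)|\,dw=o(r)$ as $r\to0^{+}$. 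Combining, $m_{f}(x,r)\geq c_{\GG}'|\nabla f(x)|\,r-o(r)$, which is exactly the pointwise bound.

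The main obstacle is the remainder estimate in the last step. On a step‑$\tau$ group the \emph{second}-order Taylor remainder \eqref{Remainder} a priori involves horizontal derivatives of $f$ up to order $2\tau$, whereas the hypothesis only provides two horizontal derivatives (through $\nabla f$ Lipschitz). The way around this is to invoke the first-order integral-remainder version of the Taylor formula on $\GG$, whose remainder only involves the first-order left-invariant vector fields $X^{(j)}_{k}f$ with $j\ge2$ (these exist for $f\in C^{1}$ and are bounded since $f$ is compactly supported) together with the modulus of continuity of $\nabla f$ for the $j=1$ correction; alternatively, one first mollifies and reduces to $f\in C_{c}^{\infty}(\GG)$, where \eqref{Taylor-formula}--\eqref{Remainder} apply verbatim and the resulting $O(r^{2})$ bound is immediate. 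The remaining points---the coordinatewise symmetry of $B(o,r)$ and the continuity of $(x,r)\mapsto m_{f}(x,r)$---are routine.
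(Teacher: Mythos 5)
Your argument is correct and coincides with the route the paper intends: reduce to a pointwise lower bound $\liminf_{r\to0^+}m_f(x,r)/r\geq c'_\GG|\nabla f(x)|$ (via the Taylor expansion on $\GG$, the sign-flip symmetry of $B(o,r)$, and \Cref{coordination-transform}), then pass from the pointwise bound to the measure estimate by the one-dimensional level-set computation and Fatou. The paper's own ``proof'' is a one-line reference to \cite[Lemma 6]{F2022} combined with \Cref{coordination-transform}, and you have supplied exactly the details that reference suppresses; the level-set integral, the identity $f_{B(x,r)}=f(x)+\overline{\Omega}$ following from $\fint_{B(o,r)}w^{(1)}_j\,dw=0$, and the constants $c'_\GG=c_\GG/|B(o,1)|$, $c_{\GG,p}=(c'_\GG)^p/p$ all check out.

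The one place to tighten is the remainder estimate, which you correctly flag as the delicate point but resolve somewhat loosely. Workaround (b) (mollify to $C_c^\infty$) does \emph{not} close the gap by itself: the second-order derivatives of $f*\varphi_\epsilon$ blow up as $\epsilon\to0^+$, and because $|m_f(x,r)-m_{f*\varphi_\epsilon}(x,r)|\lesssim\|f-f*\varphi_\epsilon\|_\infty$ is of order $\epsilon$ uniformly in $r$ (not $o(r)$), one cannot interchange the $\liminf_{r\to0}$ with the mollification limit. Workaround (a) is the right idea, but it is cleaner to avoid the deeper-layer fields $X^{(j)}_k$ entirely. Integrate along a horizontal geodesic $\gamma:[0,1]\to\GG$ from $o$ to $w$, parametrized with constant speed, so that $\dot\gamma(s)=\sum_j a_j(s)X_j|_{\gamma(s)}$ with $a_j(s)=\dot\gamma^{(1)}_j(s)$ and $\sum_j a_j(s)^2\simeq\rho(w)^2$; then since $\int_0^1 a_j(s)\,ds=w^{(1)}_j$,
\begin{align*}
f(x\cdot w)-f(x)-\nabla f(x)\cdot w^{(1)}
=\int_0^1\sum_{j}a_j(s)\bigl(X_jf(x\cdot\gamma(s))-X_jf(x)\bigr)\,ds,
\end{align*}
and the Lipschitz continuity of $\nabla f$ (with respect to $\rho$, which is implied by Euclidean Lipschitz on the compact support) gives $|\Omega(f,w)|\lesssim\operatorname{Lip}(\nabla f)\,\rho(w)^2$ directly, with no $X^{(j)}_k$-terms and no appeal to \eqref{Taylor-formula}--\eqref{Remainder}. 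With this remainder bound substituted in, your pointwise estimate and the rest of the proposal go through.
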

\begin{proof}
Combing \Cref{coordination-transform} and the proof in \cite[Lemma 6]{F2022}, we obtain the desired result.
Here, we only adapt the one-side proof in \cite[Lemma 6]{F2022} to estimate the lower bound.
\end{proof}

For $f\in L_{loc}(\GG)$, we call $m_{f}\in L_{weak}^{p}(\GG\times\RR_{+},d\omega_{p})$ if $\displaystyle\sup_{\delta>0}\delta\omega_{p}\Big(\{m_{f} >\delta\}\Big)^{\frac{1}{p}}<\infty$.
This is a quasi-Banach space for $1< p<\infty$. Moreover, we have the following characterisation.
\begin{theorem}\label{osc-sobolev}
Let $1<p<\infty$ and $f\in L_{loc}^{1}(\GG)$. Then $f\in\hw{p}$ if and only if $m_{f}\in L_{weak}^{p}(\GG\times\RR_{+},d\omega_{p})$. Moreover,
\begin{align*}
\Norm{\nabla f}_{\lp{p}}^{p}\simeq\sup_{\delta>0}\delta^{p}\omega_{p}\Big(\{m_{f}>\delta\}\Big).
\end{align*}
\end{theorem}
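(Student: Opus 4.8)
The plan is to prove the two estimates in the asserted equivalence separately; together with \Cref{coordination-transform} and \Cref{rearrangement-liminf} this yields both the ``if and only if'' and the norm comparison. For the direction $f\in\hw{p}\Rightarrow m_{f}\in L_{weak}^{p}(\GG\times\RR_{+},d\omega_{p})$, I would start from the $L^{1}$--Poincar\'e inequality on stratified groups (see \cite{FS1982}): there are $\lambda\geq1$, $C_{0}>0$ with
\[
m_{f}(x,r)=\fint_{B(x,r)}|f(y)-f_{B(x,r)}|\,dy\leq C_{0}\,r\fint_{B(x,\lambda r)}|\nabla f(y)|\,dy\leq C_{0}\,r\,M(|\nabla f|)(x),
\]
where $M$ is the Hardy--Littlewood maximal operator on the space of homogeneous type $(\GG,\rho,\mu)$ (cf. \cite{CW1971}). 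With $\Phi=M(|\nabla f|)$, the inclusion $\{m_{f}>\delta\}\subseteq\{(x,r):r>\delta/(C_{0}\Phi(x))\}$ and the identity $\int_{\delta/(C_{0}\Phi(x))}^{\infty}r^{-p-1}\,dr=\tfrac1p(\delta/(C_{0}\Phi(x)))^{-p}$ give $\delta^{p}\,\omega_{p}(\{m_{f}>\delta\})\lesssim\int_{\GG}\Phi^{p}\lesssim\Norm{\nabla f}_{\lp p}^{p}$, the last step by $L^{p}$-boundedness of $M$ for $p>1$. Taking the supremum over $\delta>0$ yields $\Norm{m_{f}}_{L_{weak}^{p}(\omega_{p})}^{p}\lesssim\Norm{\nabla f}_{\lp p}^{p}$.

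For the converse, $m_{f}\in L_{weak}^{p}\Rightarrow f\in\hw p$, the idea is to obtain a uniform $L^{p}$ bound for the gradients of smooth approximations of $f$ and then pass to a weak limit. Fix $\phi\in\cci$ with $\phi\geq0$, $\int_{\GG}\phi=1$, $\supp\phi\subseteq B(o,1)$, put $\phi_{\epsilon}(z)=\epsilon^{-\Q}\phi(\delta_{1/\epsilon}z)$, and consider the \emph{left} convolution $f_{\epsilon}(y)=\int_{\GG}\phi_{\epsilon}(z)f(z\rp y)\,dz\in C^{\infty}(\GG)$. Left convolution is the correct tool because $z\rp B(x,r)=B(z\rp x,r)$ is again a ball of radius $r$, which gives the pointwise bound $m_{f_{\epsilon}}(x,r)\leq\int_{\GG}\phi_{\epsilon}(z)\,m_{f}(z\rp x,r)\,dz$. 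The averaging operator $S_{\epsilon}g(x,r)=\int_{\GG}\phi_{\epsilon}(z)g(z\rp x,r)\,dz$ is a contraction on $L^{q}(\GG\times\RR_{+},d\omega_{p})$ for every $q\in[1,\infty]$ (since $\omega_{p}$ is a product of Haar measure in $x$ and $r^{-p-1}dr$, $\phi_{\epsilon}\geq0$, $\int\phi_{\epsilon}=1$, and left translation preserves Haar measure), so by real interpolation between $L^{1}$ and $L^{\infty}$ it is bounded on $L_{weak}^{p}(\omega_{p})$ with a constant depending only on $p$; hence $\Norm{m_{f_{\epsilon}}}_{L_{weak}^{p}(\omega_{p})}\lesssim\Norm{m_{f}}_{L_{weak}^{p}(\omega_{p})}$ uniformly in $\epsilon$.

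To invoke \Cref{rearrangement-liminf}, which needs a $C^{1}$ function with compactly supported Lipschitz gradient, I would cut off with $\chi_{N}\in\cci$, $\chi_{N}\equiv1$ on $B(o,N)$, and apply the lemma to $\chi_{N}f_{\epsilon}$ with $\Omega=B(o,N')$, $N$ large relative to $N'$; on the region where $B(x,r)\subseteq B(o,N)$ one has $m_{\chi_{N}f_{\epsilon}}=m_{f_{\epsilon}}$, while for the remaining $r$ the crude bound $m_{\chi_{N}f_{\epsilon}}(x,r)\leq2\Norm{\chi_{N}f_{\epsilon}}_{1}|B(o,1)|^{-1}r^{-\Q}$ together with integrability of $r^{-p-1}$ away from $0$ shows that those $r$ contribute nothing to $\liminf_{\delta\to0^{+}}\delta^{p}\omega_{p}(\cdot)$; thus $\int_{B(o,N')}|\nabla f_{\epsilon}|^{p}\,dx\leq c_{\GG,p}\rp\Norm{m_{f_{\epsilon}}}_{L_{weak}^{p}(\omega_{p})}^{p}\lesssim\Norm{m_{f}}_{L_{weak}^{p}(\omega_{p})}^{p}$, uniformly in $N'$ and $\epsilon$, and letting $N'\to\infty$ gives $\Norm{\nabla f_{\epsilon}}_{\lp p}\lesssim\Norm{m_{f}}_{L_{weak}^{p}(\omega_{p})}$. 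Since $p>1$, $\lp p$ is reflexive, so along a subsequence $\nabla f_{\epsilon_{k}}\rightharpoonup G$ weakly in $\lp p$; as $f_{\epsilon}\to f$ in $L^{1}_{loc}(\GG)$ by the approximate-identity property (\cite[Chapter~1]{FS1982}), testing against $\cci$ identifies $X_{j}f=G_{j}$ in the sense of distributions, so $f\in\hw p$ with $\Norm{\nabla f}_{\lp p}\leq\liminf_{k}\Norm{\nabla f_{\epsilon_{k}}}_{\lp p}\lesssim\Norm{m_{f}}_{L_{weak}^{p}(\omega_{p})}$. Combining the two directions yields $\Norm{\nabla f}_{\lp p}^{p}\simeq\sup_{\delta>0}\delta^{p}\omega_{p}(\{m_{f}>\delta\})$.

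The main obstacle is the converse direction, and within it two points stand out: (i) showing that mollification does not inflate the weak-$L^{p}$ quasinorm of $m_{f}$ — which forces the use of the left convolution, the only one compatible with the ball geometry $z\rp B(x,r)=B(z\rp x,r)$, so that the mollified oscillation is dominated by an $L^{q}$-contractive average of $m_{f}$ amenable to interpolation — and (ii) the cut-off bookkeeping needed to feed \Cref{rearrangement-liminf}, where one must verify that in the limit $\delta\to0^{+}$ only the small-scale part of $m_{\chi_{N}f_{\epsilon}}$ survives, so that the exhaustion $N,N'\to\infty$ genuinely recovers $\Norm{\nabla f_{\epsilon}}_{\lp p}$.
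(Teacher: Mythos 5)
Your proof is correct and follows essentially the same strategy as the paper: the Poincar\'e/stratified mean value theorem plus maximal function boundedness for necessity, and mollification combined with \Cref{rearrangement-liminf} and a weak-limit argument for sufficiency. The one place where you diverge is in how the mollification stability of the weak quasinorm is justified; the paper simply defers this to \cite[Lemma 7]{F2022}, whereas you give a self-contained argument via the left-convolution identity $m_{f_\epsilon}(x,r)\le\int\phi_\epsilon(z)m_f(z^{-1}x,r)\,dz$, the $L^{q}(\omega_p)$-contractivity of the averaging operator for all $q\in[1,\infty]$, and real interpolation to land in $L_{\mathrm{weak}}^{p}$ --- a clean way to make explicit what the paper outsources.
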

\begin{proof}
{\bf Proof of necessity.}
It suffices to show 
$\displaystyle\sup_{\delta>0}\delta^{p} \omega_{p}\Big(\{m_{f}>\delta\}\Big)\lesssim\Norm{\nabla f}_{\lp{p}}^{p}$.
Using stratified mean value theorem \cite[(1.41)]{FS1982},
\begin{align*}
m_{f}(x,r)=\fint_{B(x,r)}\Big|\fint_{B(x,r)}(f(x)-f(y))dy\Big|dx
\lesssim r\fint_{B(x,r)}|\nabla f(y)|dy.
\end{align*}
By the boundedness of maximal function on $\lp{p}$ (see  e.g. \cite{FS1982}) and the proof in First part of \cite[Theorem 1]{F2022}, we obtain the desired result.

{\bf Proof of sufficiency.}
It suffices to show 
$\displaystyle\Norm{\nabla f}_{\lp{p}}^{p}\lesssim\sup_{\delta>0}\delta^{p} \omega_{p}\Big(\{m_{f}>\delta\}\Big)$. 
Let $\Omega\subset\GG$ be a bounded open set and take non-negative $\varphi\in C_{c}^{2}(\GG)$ with $\int_{\GG}\varphi dx=1$.
Following \cite[Lemma 7]{F2022}, there is a positive constant $C_{\GG,p}$ such that
\begin{align*}
\sup_{\delta>0}\delta^{p}\omega_{p}\Big(\{m_{\varphi*f} >\delta\}\Big)
\leq C_{\GG,p}\sup_{\delta>0}\delta^{p}\omega_{p}\Big(\{m_{f} >\delta\}\Big).
\end{align*}
Therefore, by \Cref{rearrangement-liminf},
\begin{align*}
c_{\GG,p}\int_{\Omega}|\nabla(\varphi_{t}*f)(x)|^{p}dx
&\leq
\liminf_{\delta\rightarrow0^{+}}\delta^{p}\omega_{p}\Big(\{m_{\varphi_{t}*f}>\delta\}\cap(\Omega\times\RR_{+})\Big)\\
&\leq\sup_{\delta>0}\delta^{p}\omega_{p}\Big(\{m_{\varphi_{t}*f}>\delta\}\Big)\\
&\leq C_{\GG,p}\sup_{\delta>0}\delta^{p}\omega_{p}\Big(\{m_{f}>\delta\}\Big).
\end{align*}
Following the proof in Second part of \cite[Theorem 1]{F2022}, we obtain the desired result.
\end{proof}

By \Cref{osc-sobolev} and \cite[Remark 4]{F2022}, we immediately deduce the following corollary, which gives the equivalent characterisation of Sobolev space in terms of mean oscillation.
\begin{corollary}\label{sobolev-osc}
Let $b\in L_{loc}(\GG)$. Then $\displaystyle\set{\fint_{R}|b(x)-b_{R}|dx}_{R\in\D}\in \ell^{\Q,\infty}$ if and only if $b\in\swq$.
Moreover, then there are constants $c_{\GG}$ and $C_{\GG}$ such that
\begin{align*}
c_{\GG}\Norm{b}_{\swq}
\leq \Norm{\set{\fint_{R}|b(x)-b_{R}|dx}_{R\in\D}}_{\ell^{\Q,\infty}}
\end{align*}
and for $1\leq q<\infty$
\begin{align*}
\Norm{\set{\Big(\fint_{R}|b(x)-b_{R}|^{q}dx\Big)^{\frac{1}{q}}}_{R\in\D}}_{\ell^{\Q,\infty}}
\leq C_{\GG}\Norm{b}_{\swq}.
\end{align*}
\end{corollary}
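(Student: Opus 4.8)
The plan is to deduce the corollary from \Cref{osc-sobolev} by discretising, i.e.\ passing from the continuous family of balls $B(x,r)$ to the dyadic cubes $R\in\D$ --- this is the $\GG$--analogue of the procedure in \cite[Remark 4]{F2022}. Since $\Norm{b}_{\swq}\simeq\Norm{\nabla b}_{\lp{\Q}}$, \Cref{osc-sobolev} reduces everything to comparing $\sup_{\delta>0}\delta^{\Q}\omega_{\Q}(\{m_{b}>\delta\})$ with the two sequence quasinorms in the statement. The structural identity driving the discretisation is that, for $R\in\D_{k}$, the ``dyadic box'' $\widetilde{R}:=R\times[2^{-k-1},2^{-k})\subset\GG\times\RR_{+}$ satisfies
\begin{align*}
\omega_{\Q}(\widetilde{R})=|R|\int_{2^{-k-1}}^{2^{-k}}\frac{dr}{r^{\Q+1}}\simeq|R|\,2^{k\Q}\simeq1
\end{align*}
uniformly in $R$; as the $\widetilde{R}$ tile $\GG\times\RR_{+}$, the weak--$\ell^{\Q}$ size of $m_{b}$ for $\omega_{\Q}$ is comparable to the $\ell^{\Q,\infty}$--counting quasinorm of the sequence of values of $m_{b}$ on the boxes $\widetilde{R}$.

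Both comparisons then rest on the elementary doubling inequality $\fint_{B}|b-b_{B}|\le 2\frac{|B'|}{|B|}\fint_{B'}|b-b_{B'}|$, valid for $B\subset B'$. For the inequality $c_{\GG}\Norm{b}_{\swq}\le\Norm{\set{\fint_{R}|b-b_{R}|\,dx}_{R\in\D}}_{\ell^{\Q,\infty}}$ I would cover $\{m_{b}>\delta\}$ by the boxes $\widetilde{R}$: if $(x,r)\in\widetilde{R}$ with $R\in\D_{k}$ then $B(x,r)\subset B_{R}^{*}:=B(c_{R},c2^{-k})$ with $|B_{R}^{*}|\simeq|R|$, and using the adjacent dyadic systems one finds a cube $R'$ of size $\simeq|R|$ with $B_{R}^{*}\subset R'$, so that $m_{b}(x,r)\lesssim\fint_{R'}|b-b_{R'}|$ by doubling; since $R\mapsto R'$ is boundedly--to--one this yields $\sup_{\delta}\delta^{\Q}\omega_{\Q}(\{m_{b}>\delta\})\lesssim\Norm{\set{\fint_{R}|b-b_{R}|\,dx}_{R\in\D}}_{\ell^{\Q,\infty}}^{\Q}$. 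For the reverse inequality at $q=1$: $\fint_{R}|b-b_{R}|\lesssim m_{b}(c_{R},\gamma_{2}2^{-k})$ by doubling, and a stability estimate for $m_{b}$ under replacing $(c_{R},\gamma_{2}2^{-k})$ by comparable $(y,s)$ (doubling applied in both directions to comparable balls) shows $m_{b}(y,s)\gtrsim\fint_{R}|b-b_{R}|$ throughout a set $S_{R}\subset\widetilde{R}$ with $\omega_{\Q}(S_{R})\simeq1$ and $\{S_{R}\}_{R\in\D}$ of bounded overlap; hence $\lambda^{\Q}\#\{R:\fint_{R}|b-b_{R}|>\lambda\}\lesssim\sup_{\delta}\delta^{\Q}\omega_{\Q}(\{m_{b}>\delta\})\simeq\Norm{b}_{\swq}^{\Q}$.

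For $1<q<\infty$ I would argue directly rather than through $m_{b}$: by the Poincar\'e--Sobolev inequality on $\GG$ (Jerison), $(\fint_{R}|b-b_{R}|^{q})^{1/q}\lesssim r_{R}(\fint_{\lambda R}|\nabla b|^{s})^{1/s}$ for a suitable $s\in[1,\Q)$ with Sobolev exponent $s^{*}\ge q$, and then H\"older's inequality on $\lambda R$ gives $(\fint_{R}|b-b_{R}|^{q})^{1/q}\lesssim(\int_{\lambda R}|\nabla b|^{\Q})^{1/\Q}$; the desired $\ell^{\Q,\infty}$--bound is then exactly the weak--type boundedness, in the $\omega_{\Q}$--measure picture on $\GG\times\RR_{+}$, of the operator $g\mapsto(\int_{\lambda B(x,r)}|g|^{\Q})^{1/\Q}$, which is the mechanism used for $m_{b}$ in \cite{F2022} and transcribes without change to $\GG$. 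The main obstacle I foresee is the lower--bound half of the discretisation: establishing that $m_{b}$ is comparable to $\fint_{R}|b-b_{R}|$ not merely at the centre of $\widetilde{R}$ but on a slice of positive $\omega_{\Q}$--measure requires a clean quantitative stability estimate for $m_{b}$ with respect to the quasi--metric, and the non--nestedness of balls and dyadic cubes forces the bookkeeping with the adjacent systems together with the check that the resulting counting quasinorms all agree (equivalently, all equal $\Norm{\nabla b}_{\lp{\Q}}$ up to constants). The $q>1$ refinement, by contrast, is routine once the weak--type machinery of \cite{F2022} is in place.
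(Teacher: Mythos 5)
The paper's own proof of \Cref{sobolev-osc} is a one-line citation: ``By \Cref{osc-sobolev} and \cite[Remark 4]{F2022}\dots''. Your proposal is a plausible unpacking of exactly that citation: tile $\GG\times\RR_{+}$ by $\widetilde R=R\times[2^{-k-1},2^{-k})$, observe $\omega_{\Q}(\widetilde R)\simeq1$, use the doubling inequality for mean oscillations and the adjacent dyadic systems for the continuous--to--dyadic direction, and Poincar\'e--Sobolev/doubling for the dyadic--to--continuous direction and for $q>1$. That is the right framework, it matches the mechanism behind Frank's remark, and the upper bound for $1\le q<\infty$ is indeed routine once the tiling and the weak-type machinery are in place.

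There is, however, a genuine gap in your treatment of the first inequality, and your own caveat (``the check that the resulting counting quasinorms all agree'') does not close it. The difficulty is precisely the one you half-identify: for $(x,r)\in\widetilde R$ with $R\in\D_{k}$ the ball $B(x,r)$ need not sit inside any comparably sized cube of the \emph{fixed} system $\D$, and the adjacent-systems device produces a cube $R'\in\D^{t}$ for some $t$ depending on $(x,r)$. The argument then gives
$\sup_{\delta}\delta^{\Q}\omega_{\Q}(\{m_{b}>\delta\})\lesssim\sum_{t=1}^{\T}\Norm{\{\fint_{R}|b-b_{R}|\}_{R\in\D^{t}}}_{\ell^{\Q,\infty}}^{\Q}$, not the single-$\D$ bound stated. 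Combining this with the reverse inequalities $\Norm{\cdot}_{\ell^{\Q,\infty}(\D^{t})}\lesssim\Norm{b}_{\swq}$ for each $t$ only yields comparability of $\Norm{b}_{\swq}$ with the \emph{sum} over $t$; it does not deliver $\Norm{b}_{\swq}\lesssim\Norm{\cdot}_{\ell^{\Q,\infty}(\D^{1})}$. This is not merely a bookkeeping inconvenience: on $\GG=\RR^{\Q}$ with the standard grid and $b=\unit_{\{x_{1}>0\}}$, one has $\fint_{R}|b-b_{R}|\,dx=0$ for every $R\in\D$ while $b\notin\swq$, so the single-$\D$ first inequality is false for general $b\in L_{loc}(\GG)$. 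To make the statement correct one must either index the sequence over $\bigcup_{t}\D^{t}$, replace $\fint_{R}$ by $\fint_{\lambda R}$ for a suitable dilate $\lambda>1$, or restrict to $b$ with enough regularity that grid-aligned degeneracy cannot occur (e.g.\ $b\in{\rm VMO}(\GG)$, which is all the paper actually uses in \Cref{endponit-estimates-sect-ne}). As written, both your argument and the paper's citation leave this point unresolved.
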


\bigskip
\bigskip
{\bf Acknowledgements:} Ji Li is supported by ARC DP 220100285. X. Xiong and F. L. Yang are supported by National Natural Science Foundation of China, General Project 12371138.

\end{document}